\definecolor{olivegreen}{rgb}{0.14,0.29,0} 
\newenvironment{proof}{{\it Proof. }}{\hfill $\blacksquare$}
\newtheorem{exe}{Example}
\newtheorem{prob}{Problem}
\newtheorem{corol}{Corollary}
\newtheorem{ass}{Assumption}
\newtheorem{defin}{Definition}
\newtheorem{cla}{Claim}
\newtheorem{rem}{Remark}
\newtheorem{lem}{Lemma}
\newtheorem{prop}{Proposition}
\newtheorem{thm}{Theorem}
\newtheorem{fct}{Fact}
\newenvironment{lemma}{\begin{lem}}{\hfill $\square$ \end{lem}}
\newenvironment{proposition}{\begin{prop}}{\hfill $\square$ \end{prop}}
\newenvironment{corollary}{\begin{corol}}{\hfill $\square$ \end{corol}}
\newenvironment{example}{\begin{exe}}{\hfill $\square$ \end{exe}}
\newenvironment{remark}{\begin{rem}}{\hfill $\bullet$ \end{rem}}
\newenvironment{problem}{\begin{prob}}{\hfill $\bullet$ \end{prob}}
\newenvironment{assumption}{\begin{ass}}{\hfill $\bullet$ \end{ass}}
\newenvironment{theorem}{\begin{thm}}{\hfill $\square$ \end{thm}}
\newenvironment{definition}{\begin{defin}}{\hfill 
$\bullet$ \end{defin}}
\title{\LARGE \bf On the Converse Safety Problem for Differential Inclusions: Solutions, Regularity, and Time-Varying Barrier Functions}
\author{Mohamed Maghenem and Ricardo G. Sanfelice
\thanks{M. Maghenem is with University of Grenoble Alpes, CNRS, Gipsa-lab,  Grenoble INP,  France.  Email: mohamed.maghenem@gipsa-lab.fr.  R. G. Sanfelice is with the Department of Electrical and Computer Engineering, University of California, Santa Cruz. Email:ricardo@ucsc.edu.}
\thanks{This research has been partially supported by the National Science Foundation under Grant no. ECS-1710621, Grant no. CNS-1544396, and Grant no. CNS-2039054, by the Air Force Office of Scientific Research under Grant no. FA9550-19-1-0053, Grant no. FA9550-19-1-0169, and Grant no. FA9550-20-1-0238, and by the Army Research Office under Grant no. W911NF-20-1-0253.}
}
\begin{document}
\maketitle\thispagestyle{empty}\pagestyle{empty}

\begin{abstract}
This paper presents converse theorems for safety in terms of barrier functions for unconstrained continuous-time systems modeled as differential inclusions. Via a counterexample, we show the lack of existence of autonomous and continuous barrier functions certifying safety for a nonlinear system that is not only safe but also has a smooth right-hand side. Guided by converse Lyapunov theorems for (non-asymptotic) stability,  time-varying barrier functions and appropriate infinitesimal conditions are shown to be both necessary as well as sufficient under mild regularity conditions on the right-hand side of the system. More precisely, we propose a general construction of a time-varying barrier function in terms of a marginal function involving the finite-horizon reachable set. Using techniques from set-valued and nonsmooth analysis, we show that such a function guarantees safety when the system is safe. Furthermore, we show that the proposed barrier function construction inherits the regularity properties of the proposed reachable set. In addition, when the system is safe and smooth, we build upon the constructed barrier function to show the existence of a smooth barrier function guaranteeing safety.  Comparisons and relationships to results in the literature are also presented.
\end{abstract}

\section{introduction}    

Beyond stability and convergence, safety is among the most important properties to analyze for a general continuous-time system modeled as the differential inclusion 
\begin{align} \label{eq.1}
\dot{x} \in F(x) \qquad  x \in \mathbb{R}^n.
\end{align} 
Differential inclusions extend the concept of  differential equations by allowing the dynamics to be governed by a set-valued map instead of only a single-valued function \cite{aubin1987differential}. Safety is the property that requires the solutions to \eqref{eq.1} starting from a given set of initial conditions $X_o \subset \mathbb{R}^n$ to never reach a given unsafe region $X_u \subset \mathbb{R}^n$, where, necessarily, $X_o \cap X_u = \emptyset$  \cite{prajna2007framework, wieland2007constructive}.  Safety with respect to 
$(X_o, X_u)$ is verified when a set $K \subset \mathbb{R}^n$, with $X_o \subset K$ and $K \cap X_u = \emptyset$, is \textit{forward pre-invariant}, i.e., the solutions to \eqref{eq.1} starting from $K$ remain in $K$ for all time \cite{prajna2005optimization} --- the prefix ``pre" indicates that solutions may not exist for all $t \in [0,\infty)$, in particular, due to finite escape times. Such a set $K$ is called \textit{inductive invariant} in  \cite{taly2009deductive}. Depending on the considered application, reaching the unsafe set $X_u$ can correspond to the impossibility of applying a predefined feedback law \cite{BELLETER2019123} or, simply colliding with an obstacle \cite{tanner2003stable}. 

\subsection{Background} \label{Sec.intro.backg}

Analogous to Lyapunov theory for stability, the concept of barrier functions is a powerful tool to study safety without computing the solutions to the system. Generally speaking, two main types of barrier functions can be identified in the literature \cite{ames2014control}. The first type of barrier functions consists of a scalar function $B$ defined on the interior of $K$, denoted $\mbox{int}(K)$, with nonnegative values  such that
\begin{align*}  
\lim_{x \rightarrow \partial K} B(x)  = \infty,
\end{align*}
where $\partial K$ is the boundary of $K$. This barrier function certifies safety when the growth condition
\begin{align} \label{eqpot}  
\langle \nabla B (x), \eta \rangle \leq \gamma (B(x)) \qquad
\forall \eta \in F(x), \quad \forall x \in \mbox{int} (K)
\end{align}
is satisfied, where the scalar function $\gamma$ is such that condition \eqref{eqpot} implies that the map $t \mapsto B(\phi(t,x_o))$ does not become unbounded in finite time for every solution $\phi$ to \eqref{eq.1} starting from 
$x_o \in \mbox{int}(K)$ --- each such solution is denoted $t \mapsto \phi(t,x_o)$. Hence, the solution $\phi$ remains in $\mbox{int}(K)$ for all time. 
This type of barrier functions, often named \textit{potential} functions, has been used in constrained optimization \cite{WILLS20041415}, multiagent systems \cite{tanner2003stable}, and constrained nonlinear control design \cite{tee2009barrier}. 

The second type of barrier functions is given by a scalar function $B$ with a prescribed sign on the initial set $X_o$ and with the opposite sign on the unsafe set $X_u$. Without loss of generality, we can assume that $B$ and $(X_o,X_u)$ satisfy
\begin{align} \label{eq.2}
\begin{matrix} 
B(x) > 0 & \forall x \in X_u \\ 
B(x) \leq 0 & \forall x \in X_o. 
\end{matrix} 
\end{align} 
In this case, safety is guaranteed when 
the zero-sublevel set 
\begin{align} \label{eqaddd} 
K := \left\{ x \in \mathbb{R}^n : B(x) \leq 0 \right\}
\end{align} 
is forward pre-invariant. The first characterization of forward pre-invariance dates back to the work of Nagumo in \cite{nagumo1942lage}, where tangent-cone-based conditions are proposed; see Section \ref{sec.apNagumo} for more details. Note that the computation of a tangent cone to a general set is not always a trivial task. Fortunately, when the set $K$ satisfies \eqref{eqaddd}, it is possible to formulate sufficient conditions for forward pre-invariance using only the barrier candidate $B$ and 
the right-hand side of \eqref{eq.1}, $F$. Such sufficient conditions are usually expressed in terms of an inequality constraining the variation of $B$ along the solutions to the system \eqref{eq.1}. In \cite[Proposition 2]{prajna2007framework}, the condition 
\begin{align} 
\langle \nabla B (x), \eta \rangle \leq 0 \qquad \forall \eta \in F(x), \quad \forall x \in \mathbb{R}^n \label{eq.2c}
\end{align}
is used. Condition \eqref{eq.2c} has been relaxed in the literature in different ways. According to our previous work in \cite{draftautomatica}, the inequality in \eqref{eq.2c} does not need to hold on the entire 
$\mathbb{R}^n$ to guarantee forward pre-invariance. It is enough to guarantee that
\begin{align} 
\langle \nabla B (x), \eta \rangle \leq 0 \qquad 
\forall \eta \in F(x), \quad \forall x \in U(K) \backslash K, \label{eq.2c1}
\end{align}
where $U(K)$ is any open neighborhood around the (closed) set $K$. Furthermore, according to \cite[Theorem 1]{prajna2004safety}, when $F$ is locally Lipschitz and $\nabla B(x) \neq 0$ for all $x$ in the boundary of $K$ denoted  $\partial K$,  the inequality in \eqref{eq.2c} can be relaxed to hold only on the boundary of $K$; namely, it is enough to assume 
\begin{align} 
\langle \nabla B (x), \eta \rangle \leq 0 \qquad  
\forall \eta \in F(x), \quad \forall x \in \partial K. \label{eq.2c2}
\end{align}

The non-positiveness required in \eqref{eq.2c} and \eqref{eq.2c1} can be relaxed using uniqueness functions, or, minimal functions; see Section \ref{sec.4-} for more details. It is important to note that conditions \eqref{eq.2c}, \eqref{eq.2c1}, and \eqref{eq.2c2} require continuous differentiability of the barrier function candidate $B$. Similar conditions can be formulated when $B$ is only locally Lipschitz or only lower semicontinuous, using  appropriate tools; see \cite{draftautomatica}. In the most general case where $B$ is not necessarily smooth, the aforementioned conditions can be replaced by the following solution-dependent monotonicity property:
\begin{enumerate} [label={($\star$)},leftmargin=*]
\item \label{item:star}  Along each solution $\phi$ to \eqref{eq.1} starting from $x_o \in U(K) \backslash \mbox{int}(K)$ and such that 
$\phi([0,T],  x_o) \subset U(K) \backslash \mbox{int}(K)$, for some $T>0$ , the map $t \mapsto B(\phi(t,x_o))$ is nonincreasing on $[0,T]$. 
\hfill $\bullet$
\end{enumerate}
The second type of barrier functions in \eqref{eq.2} has been applied to multi-robots collision avoidance in \cite{glotfelter2017nonsmooth, 8625554}, adaptive cruise control in \cite{xu2018correctness}, and bipedal walking in \cite{nguyen2015safety}. 

Finally, a notion equivalent to safety, named \textit{conditional invariance}, is studied and characterized in \cite{ladde1974, ladde1972analysis, lakshmikantham1969differential, kayande1966conditionally} using Lyapunov-like conditions. Roughly speaking, a set $X_s \subset \mathbb{R}^n$ is conditionally invariant with respect to a set $X_o \subset X_s$ if the solutions starting from $X_o$ never leave the set $X_s$. Connections between Lyapunov-like conditions guaranteeing conditional invariance and the more recent conditions using barrier functions are discussed in Section \ref{appen1}. 

\subsection{Motivation} \label{secIntroMotiv}

Many existing tools to certify safety for control systems are based on the search of a controller and the corresponding barrier function that certifies safety for the resulting closed-loop system  \cite{DAI201762, 10.1007/978-3-642-39799-8_17, robey2021learning}.  By solving the converse safety problem, in this case,  one can be assured that a barrier function exists  when the control system can be rendered safe.   Generally speaking,  given a safe system \eqref{eq.1} with respect to  $(X_o, X_u)$, the \textit{converse safety problem} pertains to showing the existence of a barrier function candidate $B : \mathbb{R}^n \rightarrow \mathbb{R}$ satisfying \eqref{eq.2} and verifying conditions guaranteeing safety, such as those in \eqref{eq.2c}, \eqref{eq.2c1}, \eqref{eq.2c2}, and \ref{item:star}.  To the best of our knowledge, \cite{prajna2005necessity}, \cite{wisniewski2016converse}, and \cite{ratschan2018converse} are the only existing works treating the converse safety problem via barrier functions. We review these results next. 
 
The converse safety result proposed in \cite{prajna2005necessity} applies when $F$ is single valued and continuously differentiable. Furthermore, it assumes that there exists a continuously differentiable function $V : \mathbb{R}^n \rightarrow \mathbb{R}$ that is strictly decreasing along the solutions to \eqref{eq.1}; namely, $V$ and $F$ satisfy 
\begin{align} \label{eq.2cd} 
\langle \nabla V(x), F(x) \rangle < 0 \qquad  \forall x \in \mathbb{R}^n. 
\end{align} 
Under these conditions, safety with respect to 
$(X_o,X_u)$ is shown to imply the existence of a continuously differentiable barrier function candidate $B$ satisfying \eqref{eq.2c}. Note that this result does not apply when system \eqref{eq.1} admits a limit cycle. Indeed, for systems with limit cycles, it is not possible to find a function $V$ such that \eqref{eq.2cd} holds; see Example \ref{exp}. 

In \cite{wisniewski2016converse}, a geometric point of view is adopted using \textit{Morse-Smale} theory when  system \eqref{eq.1} is defined on a smooth and compact manifold. The right-hand side $F$ is assumed to be single valued and smooth. Also, the sets $X_o$ and $X_u$ are assumed to be compact and disjoint. In the study in \cite{wisniewski2016converse}, a robust safety notion (see Definition \ref{defrobsaf}) is introduced, for which necessary and sufficient conditions using barrier functions are proposed. Furthermore, in the converse safety result in \cite{wisniewski2016converse}, the strictly decreasing function $V$ assumed to exist in \cite{prajna2005necessity} is replaced by the existence of a \textit{Meyer} function; see \cite[Definitions 7 and 8]{wisniewski2016converse} for more details. 

Finally, in \cite{ratschan2018converse}, a converse robust safety result that does not assume existence of $V : \mathbb{R}^n \rightarrow \mathbb{R}$ such that \eqref{eq.2cd} holds nor the existence of a Meyer function is established when $F$ is smooth and single valued. According to the latter reference, system \eqref{eq.1} is robustly safe with respect $(X_o,X_u)$ if, for some $\epsilon>0$, the perturbed system 
\begin{align} \label{eq.1Inf}
\dot{x} \in F(x) + \epsilon \mathbb{B} \qquad  x \in \mathbb{R}^n,
\end{align} 
where $\mathbb{B} \subset \mathbb{R}^n$ is the closed unit ball centered at the origin, is safe with respect $(X_o,X_u)$. It is shown in \cite{ratschan2018converse} that when additionally the closures of the sets $X_o$ and $X_u$ are disjoint, and the set 
$\mathbb{R}^n \backslash X_u$ is bounded, robust safety of system \eqref{eq.1} with respect to $(X_o,X_u)$ is equivalent to the existence of a barrier function candidate satisfying \eqref{eq.2} and such that 
\begin{align*} 
\langle \nabla B (x), F(x) \rangle < 0 \qquad 
\forall x \in \partial K.
\end{align*}

To the best of our knowledge, providing necessary and sufficient conditions for safety, or robust safety, without restricting the class of systems \eqref{eq.1}, are not available in the literature. 
Furthermore, as we show in this paper, safe systems may not admit a barrier function with the properties assumed in the literature. In fact, Example \ref{expcount} presents a system as in \eqref{eq.1} that is safe with respect to $(X_o,X_u) \subset \mathbb{R}^n \times \mathbb{R}^n$, where $F$ is single valued and smooth, but does not admit a barrier function candidate $B : \mathbb{R}^n \rightarrow \mathbb{R}$, function of $x$ only, that is continuous and satisfies any of the sufficient conditions for safety in \eqref{eq.2c}, \eqref{eq.2c1}, \eqref{eq.2c2}, and \ref{item:star}. This fact motivates the new class of barrier functions introduced in this paper.

\subsection{Contributions}

This paper makes the following contributions: 

\begin{enumerate}
\item  We formulate a safety problem in terms of time-varying barrier functions, that are not necessarily smooth, and propose necessary and sufficient conditions for safety without assuming existence of $V : \mathbb{R}^n \rightarrow \mathbb{R}$ such that \eqref{eq.2cd} holds, the existence of a Meyer function, or boundedness of the set $\mathbb{R}^n \backslash X_u$. Allowing for 
nonsmooth barrier functions is justified by the lack of existence of smooth scalar functions satisfying \eqref{eq.2} for some scenarios of sets $(X_o,X_u)$ as shown in Example \ref{exp3}. Furthermore, time-varying barrier functions are motivated by the existing converse Lyapunov theorems for stability, where time-varying Lyapunov functions are constructed for systems with a stable origin \cite{Persidskiipap, kurzweil1955, kurzweil1957, temple1965stability, hahn1967stability}. 

\item In Section \ref{sec.3a}, inspired by the converse Lyapunov stability theorem in \cite{Persidskiipap}, given initial and unsafe sets $(X_o,X_u)$, we construct a time-varying barrier function as a marginal function of an appropriately defined  reachable set over a given finite window of time, along the solutions to \eqref{eq.1}, and starting from a given initial condition. We show that such a barrier function guarantees safety provided that \eqref{eq.1} is safe with respect to $(X_o,X_u)$. 

\item Furthermore, we show that this barrier function inherits the regularity properties of the proposed reachable set when this one is viewed as a set-valued map \cite{aubin2012differential}. As a result, when $F$ satisfies mild regularity conditions, we show that safety of \eqref{eq.1} with respect to $(X_o,X_u)$ is equivalent to the existence of a lower semicontinuous time-varying barrier function; see Theorem \ref{thm2}.    

\item In Section \ref{sec.3b}, when in addition $F$ is locally Lipschitz, we establish Lipschitz continuity of the proposed reachability map using Filippov Theorem \cite[Theorem 5.3.1]{Aubin:1991:VT:120830}. As a result, using the dependence of the constructed barrier function on the reachability map, we conclude that safety is equivalent to the existence of a locally Lipschitz time-varying barrier function; see Theorem \ref{thm3}.    

\item In Section \ref{sec.3c}, inspired by the converse Lyapunov stability theorem in \cite{kurzweil1955}, we build upon the barrier function constructed in Section \ref{sec.3b} to conclude the existence of a barrier function that is continuously differentiable provided that $F$ is single valued and continuously differentiable; see Theorem \ref{prop12}. As observed in \cite{kurzweil1957}, Lyapunov stability of the origin is equivalent to conditional invariance with respect to a sequence of compact sets 
$\{(X_{oi},X_{si})\}^{\infty}_{i= 0}$ that converges to the origin. However, extending the converse stability result in \cite{kurzweil1955} to the context of safety is not straightforward and offers many technical challenges. Those challenges are due to the fact that the sets $X_o$ and 
$\mathbb{R}^n \backslash X_u$ are not necessarily bounded, $X_o$ is not necessarily forward pre-invariant, and the solutions to the system are not necessarily bounded.
\end{enumerate}

Preliminary version of this work is in \cite{magh2018conditional1}, where only differential equations are considered and the proofs are omitted. 
Furthermore, the current paper includes more examples and a more detailed comparison to the existing literature.

The remainder of the paper is organized as follows. Preliminary notions are in Section \ref{sec.1}. The converse safety problem using time-varying barrier functions is formulated in Section \ref{sec.2}. The main results are in Section \ref{sec.3}. A comparison to existing literature is in Section \ref{sec.4-}. Finally, conclusion and future work are in Section \ref{sec.4}.

\textbf{Notation.}
Let $\mathbb{R}_{\geq 0} := [0, \infty)$,  $\mathbb{N} := \left\{0,1,\ldots \right\}$, and $\mathbb{N}^*:= \left\{1, 2,  \ldots, \infty \right\}$. 
For $x$ and $y \in \mathbb{R}^n$, $x^{\top}$ denotes the transpose of $x$, $|x|$ the Euclidean norm of $x$, and $\langle x, y \rangle := x^\top y$ denotes the scalar product between $x$ and $y$. 
For a set $K \subset \mathbb{R}^n$, we use $\mbox{cl}(K)$ to denote its closure and $|x|_K := \inf_{y \in K} |x-y|$ to define the distance between $x$ and the set $K$. For $O \subset \mathbb{R}^n$, $K \backslash O$ denotes the subset of elements of $K$ that are not in $O$. By $\mathbb{B}$, we denote the closed unite ball centered at the origin. By $F: \mathbb{R}^n \rightrightarrows \mathbb{R}^n $, we denote a set-valued map associating each element $x \in \mathbb{R}^n$ into a subset $F(x) \subset \mathbb{R}^n$. For a set-valued map $F : \mathbb{R}^n \rightrightarrows \mathbb{R}^m$, $\dom F$ denotes the domain of definition of $F$ and $F^{-1}(x)$ denotes the reciprocal image of $F$ evaluated at $x$. For a continuously differentiable function $B: \mathbb{R}^n \rightarrow \mathbb{R}$, $\nabla B(x)$ denotes the gradient of $B$ evaluated at $x$. 
Finally,  by $\mathcal{C}^k(K)$, with $k \in \mathbb{N}$, we denote the class of $k-$times differentiable functions on $K$ where the $k-$th derivative is continuous on $K$ (when $K = \mathbb{R}^n$, we only write $\mathcal{C}^k$). 

\section{Preliminaries} \label{sec.1}

\subsection{Set-Valued and Single-Valued Maps} \label{sec.1a}

We start this section by recalling the following continuity notions for set-valued and single-valued maps. 

\begin{definition} [Semicontinuous set-valued maps] \label{deflusc}
Consider a set-valued map $F: K \rightrightarrows \mathbb{R}^n$, where $K \subset \mathbb{R}^m$.
\begin{itemize}
\item The map $F$ is said to be \textit{outer semicontinuous} at $x \in K$ if, for every sequence $\left\{x_i\right\}^{\infty}_{i=0} \subset K$ and for every sequence  
$\left\{ y_i \right\}^{\infty}_{i=0} \subset \mathbb{R}^n$ with 
$\lim_{i \rightarrow \infty} x_i = x$, $\lim_{i \rightarrow \infty} y_i = y \in \mathbb{R}^n$, and $y_i \in F(x_i)$ for all $i \in \mathbb{N}$, we have $y \in F(x)$; 
see \cite[Definition 5.9]{goebel2012hybrid}. 
\item The map $F$ is said to be \textit{lower semicontinuous}  (or, equivalently, \textit{inner semicontinuous}) at $x \in K$ if for each 
$\epsilon > 0$ and $y_x \in F(x)$, there exists $U(x)$ satisfying the following property: for each $z \in U(x) \cap K$, there exists $y_z \in F(z)$ such that $|y_z - y_x| \leq \epsilon$; see  \cite[Proposition 2.1]{michael1956continuous}.  
\item The map $F$ is said to be \textit{upper semicontinuous} at $x \in K$ if, for each 
$\epsilon > 0$, there exists $U(x)$ such that for each $y \in U(x) \cap K$, $F(y) \subset F(x) + \epsilon \mathbb{B}$; see \cite[Definition 1.4.1]{aubin2009set}.
\item The map $F$ is said to be \textit{continuous} at $x \in K$ if it is both upper and lower semicontinuous at $x$.
\end{itemize}
Furthermore, the map $F$ is said to be upper, lower, outer semicontinuous, or continuous if, respectively, it is upper, lower, outer semicontinuous, or continuous for all $x \in K$.
\end{definition}

\begin{definition}[Semicontinuous single-valued maps] \label{defluscbis}
Consider a scalar function $B: K \rightarrow \mathbb{R}$, where $K \subset \mathbb{R}^m$.
\begin{itemize}
\item The scalar function $B$ is said to be \textit{lower semicontinuous} at $x \in K$ if, for every sequence $\left\{ x_i \right\}_{i=0}^{\infty} \subset K$ such that $\lim_{i \rightarrow \infty} x_i = x$, we have $\liminf_{i \rightarrow \infty} B(x_i) \geq B(x)$. 
\item The scalar function $B$ is said to be \textit{upper semicontinuous} at $x \in K$ if, for every sequence $\left\{ x_i \right\}_{i=0}^{\infty} \subset K$ such that $\lim_{i \rightarrow \infty} x_i = x$, we have $\limsup_{i \rightarrow \infty} B(x_i) \leq B(x)$.  
\item The scalar function $B$ is said to be \textit{continuous} at $x \in K$ if it is both upper and lower semicontinuous at $x$. 
\end{itemize}
Furthermore, $B$ is said to be upper semicontinuous, lower semicontinuous, or continuous if, respectively, it is upper semicontinuous, lower semicontinuous, or continuous for all $x \in K$.
\end{definition} 
  
\begin{definition}[Locally bounded set-valued maps]
A set-valued map $F: K \rightrightarrows \mathbb{R}^n$, with $K \subset \mathbb{R}^m$, is said to be \textit{locally bounded} if, for any $x \in K$, there exist $U(x)$ and $\beta > 0$ such that $|\zeta| \leq \beta$ for all $\zeta \in F(y)$ and for all $y \in U(x) \cap K$.  
\end{definition}

\begin{definition} [Locally Lipschitz set-valued maps] \label{deflip}
The set-valued map $F : K \rightrightarrows \mathbb{R}^n$, with $K \subset \mathbb{R}^m$, is said to be \textit{locally Lipschitz} if, for each nonempty set $K_o \subset K$, there exists $k>0$ such that, 
for all $(x_1,x_2) \in K_o \times K_o$, 
\begin{align} \label{eq.lipset}
F(x_1) \subset F(x_2) + k |x_1-x_2| \mathbb{B},  
\end{align}
or, equivalently,
\begin{align} \label{eq.lipsetb}
d_H(F(x_2), F(x_1)) \leq  k |x_2-x_1|,  
\end{align}
where $d_H(X_1, X_2)$ is the Hausdorff distance between the sets $X_1 \subset \mathbb{R}^m$ and 
$X_2 \subset \mathbb{R}^m$; namely, 
\begin{align} \label{eq.Haus}
d_H(X_1, X_2) := \max \left\{ \sup_{x \in X_1} |x|_{X_2}, \sup_{x \in X_2} |x|_{X_1} \right\}.
\end{align}
\end{definition}

\begin{definition} [Locally Lipschitz functions] \label{deflipfun}
A function $F : K \rightrightarrows \mathbb{R}^n$, with $K \subset \mathbb{R}^m$, is said to be \textit{locally Lipschitz} if, for each nonempty set $K_o \subset K$, there exists $k>0$ such that, 
for all $(x_1,x_2) \in K_o \times K_o$, 
\begin{align} \label{eq.lipfun}
|F(x_1) - F(x_2)| \leq k |x_1-x_2|.  
\end{align}
\end{definition}

\begin{definition}[Epigraph of functions]
Given a scalar function $B : \mathbb{R}^n \rightarrow \mathbb{R}$, its epigraph is given by 
\begin{align} 
\epi B := & \left\{ (x, r) \in \mathbb{R}^n \times \mathbb{R} : r \geq B(x) \right\}. \label{eq:epi}
\end{align}
\end{definition}

\begin{definition}[Regular sets and functions] \label{def.reg}
A set $K \subset \mathbb{R}^n$ is said to be regular if $T_K(x) = C_K(x)$ for all $x \in K$, 
where $T_K$ and $C_K$ are the \textit{contingent} and the \textit{Clarke tangent} cones of $K$ at $x$, respectively, and  given by
\begin{align} 
T_K(x) & := \left\{ v \in \mathbb{R}^n: \liminf_{h \rightarrow 0^+} \frac{|x + h v|_K}{h} = 0 \right\}. \label{eq.toncon} 
\\
C_K(x) & := \left\{ v \in \mathbb{R}^n: \limsup_{y \rightarrow x, h \rightarrow 0^+} \frac{|y+ h v|_K}{h} = 0 \right\}. \label{eq.tonconc} 
\end{align}
Furthermore, a locally Lipschitz function $B : \mathbb{R}^n \rightarrow \mathbb{R}$ is regular if $\epi B$ is regular. 
\end{definition}

\begin{remark}
The definition of regular functions used in this paper is equivalent to the definition used in \cite{clarke2008nonsmooth}; 
see Proposition 7.3 therein. 
\end{remark}

\subsection{Proximal Subdifferential and Clarke Generalized Gradient} \label{sec.1b}

In this section, we recall from  \cite{clarke2008nonsmooth} the tools to certify safety using nonsmooth barrier function candidates.

\begin{definition} [Proximal normal cone] \label{def-prox}
Given a set $S \subset \mathbb{R}^n$, the proximal normal cone $N^P_S$ associated with  $S$ evaluated at $x \in \mbox{cl}(S)$ is given by
\begin{align} \label{pro-cone}
\hspace{-0.2cm} N_S^P(x) := \left\{ \zeta \in \mathbb{R}^n : \exists r > 0~\mbox{so that}~|x+r\zeta|_{S} = r 
|\zeta| \right\}.
\end{align}
\end{definition}

\begin{definition}[Proximal subdifferential]  \label{defps}
The proximal subdifferential of a lower semicontinuous function $B: \mathbb{R}^n \rightarrow \mathbb{R}$ is the set-valued map $ \partial_P B : \mathbb{R}^n \rightrightarrows \mathbb{R}^n $ such that, for all $x \in \mathbb{R}^n$,
\begin{align} \label{eq.subgrad}
\hspace{-0.2cm} \partial_P B(x) := \left\{ \zeta \in \mathbb{R}^n : [\zeta^\top~-1]^\top \in N^P_{\epi B} (x, B(x)) \right\}.
\end{align}
Moreover, each vector $\zeta \in \partial_P B(x)$ is said to be a \textit{proximal subgradient} of $B$ at $x$. 
\end{definition}

\begin{remark}
Using \cite[Theorem 2.5]{clarke2008nonsmooth}, we conclude that 
\begin{align} \label{eq.subgradbis}
\partial_P B(x) = & \left\{ \zeta \in \mathbb{R}^n : \exists U(x),~\exists \epsilon > 0 : \forall y \in U(x) \right. \nonumber \\ & \left.
B(y) \geq B(x) + \langle \zeta , y-x \rangle - \epsilon |y-x|^2 
\right\}.
\end{align}
Furthermore, when $B \in \mathcal{C}^2$, we conclude that $\partial_P B(x) = \left\{ \nabla B(x) \right\}$. Moreover, the latter equality holds also when $B$ is only $\mathcal{C}^1$ provided that $\partial_P B(x) \neq \emptyset$. 
\end{remark} 

\begin{definition} [Clarke generalized gradient] \label{defgen}
Let $B : \mathbb{R}^n \rightarrow \mathbb{R}$ be locally Lipschitz. Let $\Omega$ be any subset of zero measure in $\mathbb{R}^n$,
and let $\Omega_B$ be the set of points in $\mathbb{R}^n$ at which $B$ fails to be differentiable. The Clarke generalized gradient at $x$ is defined as
\begin{align} \label{eq.gg}
\partial_C B(x) := \co \left\{ \lim_{i \rightarrow \infty} \nabla B(x_i) : x_i \rightarrow x,~x_i \notin \Omega_B,~x_i \notin \Omega \right\}.
\end{align}
\end{definition} 

\begin{remark}
Definition \ref{defgen} is equivalent to the original definition of the Clarke generalized gradient in \cite{clarke2008nonsmooth}; see Theorem 8.1 therein.
\end{remark}

\subsection{Safety and Set-Invariance in Differential Inclusions} \label{sec.1d}

First, we recall the concept of solution to \eqref{eq.1}.

\begin{definition} [Concept of solution] \label{def.CS}
A function $\phi : \dom \phi \rightarrow \mathbb{R}^n$, where $\dom \phi$ is of the form $[0,T]$ or $[0,T)$ for some $T \in \mathbb{R}_{\geq 0} \cup \{+ \infty\}$, is a solution to \eqref{eq.1} starting from $x_o \in \mathbb{R}^n$ if $t \mapsto \phi(t,x_o)$ is locally absolutely continuous and satisfies \eqref{eq.1} for almost all $t \in \dom \phi$.
\end{definition}
 A solution $\phi$ starting from $x_o \in \mathbb{R}^n$ is forward complete if $\dom \phi$ is unbounded, and it is maximal if there is no solution $\psi$ starting from $x_o$ such that $\psi(t,x_o) = \phi(t,x_o)$ for all $t \in \dom \phi$ and $\dom \phi$ is a proper subset of $\dom \psi$. 
 Finally, the system \eqref{eq.1} is said to be forward complete if each of its maximal solutions is forward complete. 

Next, we consider a set $X_u \subset \mathbb{R}^n$ denoting the unsafe region of the state space, a set $X_o \subset \mathbb{R}^n$ denoting the set of initial conditions -- namely, the region that the solutions start from -- and a set $X_s$ denoting the safe set. Without loss of generality, we assume that $X_o \cap X_u = \emptyset$, $X_o \subset X_s$, and $X_s \cap X_u = \emptyset$.

\begin{definition}[Safety] \label{def-2} 
System \eqref{eq.1} is said to be safe with respect to 
$(X_o, X_u)$ if, for each solution $\phi$ to \eqref{eq.1} starting from 
$x_o \in X_o$, we have $\phi(t, x_o) \in \mathbb{R}^n \backslash X_u$ for all $t \in \dom \phi$.
\end{definition}  
\begin{definition} [Conditional invariance \cite{ladde1974}] \label{def-1} 
A set $X_s \subset \mathbb{R}^n$ is conditionally invariant with respect to a set $X_o \subset X_s$ for system \eqref{eq.1} if, for each solution $\phi$ starting from $x_o \in X_o$, we have
$\phi(t, x_o) \in X_s$ for all $t \in \dom \phi$.  
\end{definition}
\begin{definition} [Forward pre-invariance] \label{def-3} 
A set $X_s \subset \mathbb{R}^n$ is forward pre-invariant for 
\eqref{eq.1} if, for each solution $\phi$ to \eqref{eq.1} starting from $x_o \in X_s$, we have $\phi(t, x_o) \in X_s$ for all $t \in \dom \phi$.  
\end{definition}

The safety and the conditional invariance notions are related as follows: system \eqref{eq.1} is safe with respect to $(X_o, X_u)$ if and only if the set $X_s := \mathbb{R}^n \backslash X_u$ is conditionally invariant with respect to $X_o$ for \eqref{eq.1}. Safety generalizes the forward pre-invariance notion: 
forward pre-invariance of a set $X_s \subset \mathbb{R}^n$ is equivalent to safety with respect 
$(X_s , \mathbb{R}^n \backslash X_s)$. Note that, the prefix ``pre'' in forward pre-invariance is used to accommodate maximal solutions that are not complete. For example, if a solution $\phi$ to \eqref{eq.1} starts from $x_o \in X_s$ and has a finite-time escape while remaining in $X_s$, then such a solution may still satisfy $\phi(t,x_o) \in X_s$ for all $t\in \dom \phi$, but with $\dom \phi$ bounded and open to the right.
 
\section{The Converse-Safety Problem Formulation} \label{sec.2}

Generally speaking, converse safety theorems identify classes of dynamical systems for which safety is equivalent to the existence of a smooth barrier function satisfying \eqref{eq.2} plus a sufficient condition for safety.  According to the following (counter) example, for the system in \eqref{eq.1} that is safe with respect to $(X_o,X_u)$,  it is not always possible to find a barrier function candidate $B : \mathbb{R}^n \rightarrow \mathbb{R}$, function of $x$ only,  that is continuous and such that both  \eqref{eq.2} and \ref{item:star} hold. 

\begin{example} \label{expcount}
Consider the system in \eqref{eq.1} with $x \in \mathbb{R}^2$,  
\begin{equation} 
\label{cont.exp}
\begin{aligned} 
F(x) := \left\{
\begin{matrix}
\begin{bmatrix}
 -x_2 + r x_1 \sin(1/r)^2  \\ 
 x_1 + r x_2 \sin (1/r)^2
 \end{bmatrix} & \text{if} ~ x \neq 0
\\ & \\
0 & \text{otherwise}, 
\end{matrix} 
\right. 
\end{aligned}
\end{equation}
and $r := |x|$.   The system is safe with respect to the sets
\begin{align} \label{eqSetsCounterExp} 
X_o := \left\{ 0 \right\},~~ X_u := \mathbb{R}^2 \backslash X_o. 
\end{align}
Indeed, the safety property, in this case, is equivalent to forward invariance of the origin (which coincides with $X_o$). Forward invariance of the origin holds since the origin is an equilibrium point for system \eqref{cont.exp}. However, we show below that it is not possible to find a barrier candidate $B$, function only of $x$, that is continuous, nonincreasing along the solutions to the system, and at the same time having a value at the origin that is strictly smaller than all the values elsewhere as \eqref{eq.2} requires. 

In polar coordinates, system \eqref{cont.exp} can be rewritten as
\begin{align} \label{cont.exp1}
\dot{r} = (r^2/2) \sin (1/r)^2, \qquad \dot{\theta} = 1.
\end{align}
Furthermore, from \eqref{cont.exp1}, it follows that the origin is surrounded by (countably) infinitely many limit cycles centered at the origin, denoted by $Q_i$, $i \in \mathbb{N}$. Moreover, the radius of the limit cycles monotonically converges to zero as $i \rightarrow \infty$ and the trajectories starting from the interior of the annulus formed by each two circles $Q_{i+1}$ and $Q_i$ are spirals that leave $Q_{i+1}$ and approach $Q_i$. Figure \ref{Fig:1} depicts such limit cycles as well as solutions starting from different initial conditions.
\begin{figure} \label{Fig:1}
\begin{center}
\includegraphics[width= 1 \columnwidth]{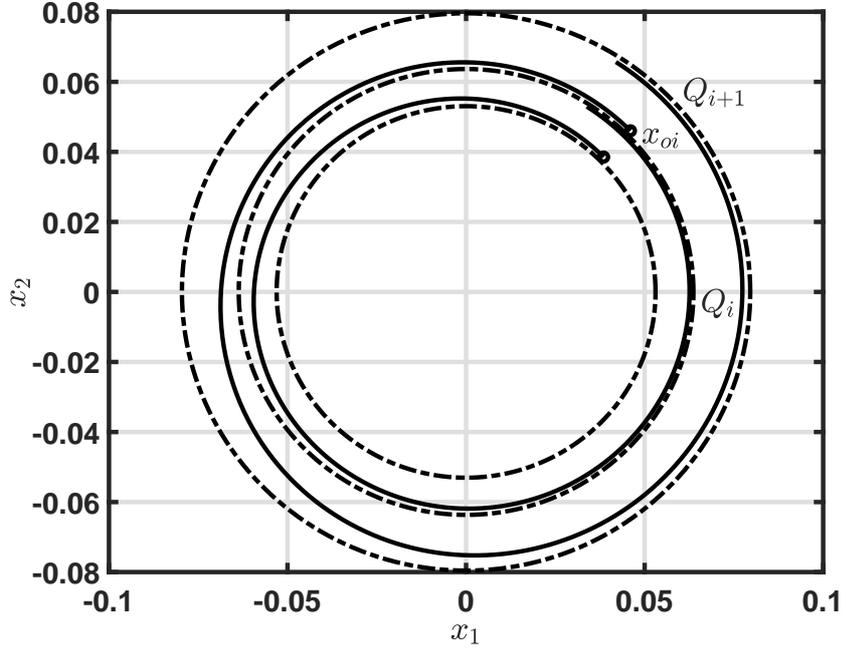}
\caption{Solutions to system \eqref{cont.exp} starting from different initial conditions.}
\end{center}
\end{figure}

Now, assume the existence of a continuous function $B$ that is nonincreasing along the solutions to \eqref{cont.exp} and positive definite. Furthermore, for a sequence of points 
$\left\{ x_i \right\}^{\infty}_{i=0}$ with $x_i \in Q_i$, the sequence $\left\{ B(x_i) \right\}^{\infty}_{i=0}$ converges to zero, and is strictly positive. Hence, there exists a strictly positive and monotonically decreasing subsequence $\left\{B(x_{i_k})\right\}^{\infty}_{k=0}$ that also converges to zero. As a result, there exist $(l_1,l_2) \in \mathbb{N} \times \mathbb{N}$ and $\epsilon > 0$ such that $ B(x_{l_1}) - B(x_{l_2}) = \epsilon $. We assume, further and without loss of generality, that $l_{2} - l_{1} = 2$ (the same reasoning is valid if $l_{2} - l_{1} > 2$). Next, using the continuity assumption on $B$ and the properties of solutions to \eqref{cont.exp}, it follows that for any $\epsilon_1>0$ we can find $T> 0$ and two initial conditions $x_o$ and $x_{o1}$ in the interior of the annulus formed by $Q_{l_2}$ and $Q_{l_{2}-1}$ and, respectively, in the interior of the torus formed by $Q_{l_2-1}$ and $Q_{l_1}$ such that 
\begin{align*}
\max \left\{ | B(x_o) - B(x_{l_2})|, \right. & \left.|B(x_{o1}) - B(\phi(T,x_o))|, \right. \\ & \left. |B(x_{l_1}) - B(\phi_1(T,x_{o1}))| \right\} \leq \epsilon_1,
\end{align*} 
where $\phi$ and $\phi_1$ are the solutions to \eqref{cont.exp} starting from $x_o$ and $x_{o1}$, respectively. 
Now, having 
\begin{align*}
\epsilon = & B(x_{l_1}) - B(x_{l_2}) = B(x_{l_1}) - B(\phi_1(T,x_{o1})) + \\ & B(\phi_1(T,x_{o1})) - B(x_{o1}) + B(x_{o1}) - B(\phi(T,x_o)) + \\ & B(\phi(T,x_o)) - B(x_o) + B(x_o) - B(x_{l_2})      
\end{align*}
and using the fact that $B$ does not increase along the solutions to system \eqref{cont.exp}, we obtain
 \begin{align*}
 \epsilon = & B(x_{l_1}) - B(x_{l_2})  \leq |B(x_{l_1}) - B(\phi_1(T,x_{o1}))| + \\ & |B(x_{o1}) - B(\phi(T,x_o))| + | B(x_o) - B(x_{l_2})| \leq 3 \epsilon_1.
\end{align*}
The latter fact yields to a contradiction since $\epsilon$ is fixed and $\epsilon_1$ can be made as small as possible, that is, for $\epsilon_1 = \epsilon/4$, we obtain $\epsilon \leq 3 \epsilon/4$ which is a contradiction. Hence, though it is safe, an autonomous barrier function does not exist.   
\end{example}

This example  is inspired from \cite[Page 82]{hahn1967stability} and \cite[Page 46]{temple1965stability},  where the existence of Lyapunov functions for (non-asymptotically) stable systems is analyzed. 

To handle the lack of existence of smooth barrier functions for safe systems, we introduce the following  time-varying barrier function candidate notion. 

\begin{definition}[Time-varying barrier function candidate] \label{defTVBf}
A scalar function $B : \mathbb{R}_{\geq 0} \times \mathbb{R}^n \rightarrow \mathbb{R}$ is a time-varying barrier function candidate for safety with respect to $(X_o,X_u)$ if 
\begin{align} 
 B(t,x) & > 0 \qquad \forall (t,x) \in \mathbb{R}_{\geq 0} \times  X_u, \label{eq.2a+} \\ 
 B(t,x) & \leq 0 \qquad \forall (t,x) \in \mathbb{R}_{\geq 0} \times  X_o. \label{eq.2b+} 
\end{align}
\end{definition}

Using time-varying barrier functions, we will be able to address the following converse safety problem.

\begin{problem}[Converse safety problem] \label{prob1}
Given sets $(X_o,X_u) \subset \mathbb{R}^{n} \times \mathbb{R}^n$, with $X_o \cap X_u = \emptyset$, show that the system in \eqref{eq.1} is safe with respect to $(X_o,X_u)$ if and only if there exists a 
time-varying barrier function candidate $B : \mathbb{R}_{\geq 0} \times \mathbb{R}^n \rightarrow \mathbb{R}$, with the best possible regularity\footnote{By ``best regularity'',  we mean the strongest smoothness property.},  such that
\begin{enumerate} [label={($\star \star$)},leftmargin=*]
\item \label{item:starstar} 
Along each solution $\phi$ to \eqref{eq.1} starting from $x_o \in U(K) \backslash \mbox{int}(K)$ and remaining in $U(K) \backslash \mbox{int}(K)$, the map $t \mapsto B(t,\phi(t,x_o))$ is nonincreasing, where 
\begin{align} \label{eqadddbis} 
K := \left\{ (t,x) \in \mathbb{R}_{\geq 0} \times \mathbb{R}^n : B(t,x) \leq 0 \right\}.
\end{align}  
\end{enumerate}
\end{problem}

Note that the property in \ref{item:starstar} requires the computation of the solutions to \eqref{eq.1}. However, depending on the regularity of the function $B$ and of the map $F$, as shown in \cite{Sanfelice:monotonicity}, it is possible to use the following infinitesimal conditions that are necessary and sufficient to conclude \ref{item:starstar}.  

\begin{itemize}
\item When $B$ is continuously differentiable, \ref{item:starstar} is satisfied if
\begin{equation}
\label{eq.2c1+}
\begin{aligned} 
\langle \nabla B (t,x) &, [1 \quad  \eta^\top]^\top \rangle \leq 0  
\\ &  \forall \eta \in F(x), ~  \forall (t,x) \in  U(K) \backslash K.
\end{aligned}
\end{equation}
When additionally $F$ is locally Lipschitz, \eqref{eq.2c1+} is equivalent to \ref{item:starstar}.

\item When $B$ is locally  Lipschitz and $F$ is locally bounded, \ref{item:starstar} is satisfied if 
\begin{equation}
\label{eq.2c1lip}
\hspace{-0.4cm}
\begin{aligned} 
\langle \zeta, [1 \quad \eta^\top]^\top \rangle \leq 0 
 \quad & \forall \zeta \in \partial_C B (t,x), ~ \forall \eta \in F(x), \\ &  \forall (t,x) \in U(K) \backslash K,
\end{aligned}
\end{equation}
where $\partial_C B$ is the Clarke generalized gradient of $B$ (see Definition \ref{defgen}). When additionally $F$ is locally Lipschitz and $B$ is regular according to Definition \ref{def.reg}, \eqref{eq.2c1lip} is equivalent to \ref{item:starstar}, 

\item When $B$ is only continuous and $F$ is locally Lipschitz with closed images, \ref{item:starstar} is satisfied if and only if
\begin{equation}
\label{eq.2c1lsc}
\hspace{-0.4cm}
\begin{aligned} 
\langle \zeta, [1 \quad \eta^\top]^\top \rangle \leq 0 \quad 
 & \forall \zeta \in \partial_P B (t,x),~ \forall \eta \in F(x), \\ &  \forall (t,x) \in U(K) \backslash K,
\end{aligned}
\end{equation}
where $\partial_P B$ is the proximal subdifferential of $B$ (see Definition \ref{def-prox}).
\end{itemize}

To solve Problem \ref{prob1}, we start showing that having a time-varying barrier function candidate verifying \ref{item:starstar} is enough to conclude that the system in 
\eqref{eq.1} is safe with respect to $(X_o,X_u)$. In particular, note that \ref{item:starstar} reduces to \ref{item:star} when $B$ is time-independent.

\begin{theorem}  \label{thm2pre} 
Given initial and unsafe sets $(X_o, X_u) \subset \mathbb{R}^n \times \mathbb{R}^n$,  system \eqref{eq.1} is safe with respect to 
$(X_o, X_u)$ if there exists a  lower semicontinuous time-varying barrier function candidate $B : \mathbb{R}_{\geq 0} \times \mathbb{R}^n \rightarrow \mathbb{R}$ such that \ref{item:starstar} holds.
\end{theorem}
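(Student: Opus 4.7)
The plan is a contradiction argument. Assume system~\eqref{eq.1} is not safe with respect to $(X_o, X_u)$, so some solution $\phi$ to~\eqref{eq.1} starts from $x_o \in X_o$ and reaches $X_u$ at some time $t^\star$ in its domain. Form the lifted curve $\tilde\phi(t) := (t, \phi(t,x_o))$, which is continuous since $\phi$ is locally absolutely continuous. From \eqref{eq.2a+}--\eqref{eq.2b+}, $B(0,x_o) \le 0$ and $B(t^\star, \phi(t^\star,x_o)) > 0$, so $\tilde\phi(0) \in K$ while $\tilde\phi(t^\star) \notin K$. Lower semicontinuity of $B$ is essential here: it makes the sublevel set $K \subset \mathbb{R}_{\ge 0} \times \mathbb{R}^n$ defined in~\eqref{eqadddbis} closed, which unlocks the topological steps that follow.

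Next, I would localize the exit by setting $t_1 := \sup\{ t \in [0, t^\star] : \tilde\phi(t) \in K \}$. Since $\tilde\phi^{-1}(K)$ is closed and contains $0$, $t_1$ belongs to this preimage, so $\tilde\phi(t_1) \in K$ and $t_1 < t^\star$. By construction, $\tilde\phi(t) \notin K$ for every $t \in (t_1, t^\star]$. A short argument rules out $\tilde\phi(t_1) \in \mbox{int}(K)$: otherwise, by openness of $\mbox{int}(K)$ and continuity of $\tilde\phi$, some $t > t_1$ arbitrarily close to $t_1$ would yield $\tilde\phi(t) \in \mbox{int}(K) \subseteq K$, contradicting that $t_1$ is a supremum. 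Thus $\tilde\phi(t_1) \in K \backslash \mbox{int}(K)$, which places the lifted trajectory at the precise locus where~\ref{item:starstar} has room to act.

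Now I would invoke~\ref{item:starstar} to contradict the exit. Let $U(K)$ be the open neighborhood of $K$ supplied by~\ref{item:starstar}. By continuity of $\tilde\phi$ at $t_1$ together with openness of $U(K)$, there is $\delta \in (0, t^\star - t_1]$ with $\tilde\phi(t) \in U(K)$ for all $t \in [t_1, t_1 + \delta]$. On this interval $\tilde\phi$ avoids $\mbox{int}(K)$: it lies in $K \backslash \mbox{int}(K)$ at $t=t_1$ and in $U(K) \backslash K$ for $t \in (t_1, t_1 + \delta]$. Time-invariance of~\eqref{eq.1} makes $s \mapsto \phi(t_1 + s, x_o)$ a solution whose time-augmented lift on $[0, \delta]$ coincides with $\tilde\phi|_{[t_1, t_1+\delta]}$ and remains in $U(K) \backslash \mbox{int}(K)$, so~\ref{item:starstar} forces $t \mapsto B(t, \phi(t, x_o))$ to be nonincreasing on $[t_1, t_1 + \delta]$. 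Hence $B(t_1+\delta, \phi(t_1+\delta, x_o)) \le B(t_1, \phi(t_1, x_o)) \le 0$, whereas $\tilde\phi(t_1+\delta) \notin K$ gives $B(t_1+\delta, \phi(t_1+\delta, x_o)) > 0$, the desired contradiction.

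The main delicate point is topological rather than analytic: lower semicontinuity of $B$ is exactly what is needed to make $K$ closed, so that the supremum $t_1$ is attained by the lifted solution and lands on $K \backslash \mbox{int}(K)$. Without this regularity hypothesis, the lifted trajectory could slip out of $K$ through a non-closed boundary and the monotonicity property~\ref{item:starstar} would lack an anchor point from which to be invoked; everything else in the argument is purely a matter of combining continuity of $\tilde\phi$, openness of $U(K)$ and $\mbox{int}(K)$, and the pointwise sublevel characterization of $K$.
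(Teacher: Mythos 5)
Your proof is correct and takes essentially the same approach as the paper's: both argue by contradiction, lift the trajectory to $(t,\phi(t,x_o))$, locate the last exit time $t_1$ from the zero-sublevel set $K$, and use the monotonicity in \ref{item:starstar} over a short interval to contradict $B>0$ after the exit. Your version is more careful in spelling out why lower semicontinuity of $B$ is needed (closedness of $K$ so that the supremum $t_1$ is attained inside $K$) and in ruling out $\tilde\phi(t_1)\in\mbox{int}(K)$, steps the paper compresses into a single appeal to continuity of $\phi_a$.
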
 

\begin{proof}
Consider the extended system 
\begin{align} \label{extsys}
(\dot{t}, \dot{x}) \in (1, F(x)) \qquad  
(t,x) \in \mathbb{R}_{\geq 0} \times \mathbb{R}^n
\end{align}
and the extended initial and unsafe sets 
$X_{oa} := \mathbb{R}_{\geq 0} \times X_o$ and  
$X_{ua} := \mathbb{R}_{\geq 0} \times X_u$, respectively. 
To use a contradiction argument, we assume that there exists 
a solution $\phi_a := (t,\phi)$ starting from $\phi_{ao} := (0,x_o) \in X_{oa}$ that reaches the set $X_{ua}$ in finite time. 
This implies,  \blue{using the continuity of $\phi_a$},  the existence of $0 \leq t_1 < t_2$ such that $\phi_a ([t_1,t_2],\phi_{ao}) \subset U(\partial K) \backslash \mbox{int}(K)$,  $\phi_a(t_1, x_{ao}) \in \partial K$, and $\phi_a (t_2,\phi_{ao}) \in  U(\partial K) \backslash K$. Hence, $B(\phi_a(t_1,\phi_{ao})) \leq 0$ and $B(\phi_a(t_2,\phi_{ao})) > 0$. However, this contradicts \ref{item:starstar}.
\end{proof}

The challenge in Problem \ref{prob1} is to prove the reverse direction of the statement in Theorem \ref{thm2pre}, namely, necessity of the existence of a barrier function when \eqref{eq.1} is safe. In Section \ref{sec.3}, we prove that result inspired by the converse Lyapunov theorems for 
(non-asymptotic) stability in \cite{Persidskiipap, kurzweil1955, kurzweil1957}.  

\section{Solutions to the Converse Safety Problem} 
\label{sec.3} 

Given the differential inclusion in \eqref{eq.1}, we consider the following mild condition on 
$F$.
\begin{assumption}  \label{item:difinc}
The map $F: \mathbb{R}^n \rightrightarrows \mathbb{R}^n $ is upper semicontinuous, and $F(x)$ is compact and convex for all 
$x \in \mathbb{R}^n$.
\end{assumption}
Assumption \ref{item:difinc} is used in the literature to assure existence of solutions and adequate structural properties for the set of solutions to differential inclusions; see \cite{aubin2012differential, Aubin:1991:VT:120830, clarke2008nonsmooth}. When $F$ is single valued, Assumption \ref{item:difinc} reduces to continuity of $F$. 

\begin{remark} \label{remsetval}
In some of the existing literature, e.g. \cite{goebel2012hybrid}, Assumption \ref{item:difinc} is replaced by the equivalent assumption stating that $F$ needs to be outer semicontinuous  and locally bounded with convex images. Outer semicontinuous and locally bounded set-valued maps are upper semicontinuous with compact images \cite[Theorem 5.19]{rockafellar2009variational}. The converse is also true using \cite[Lemma 5.15]{goebel2012hybrid} and the fact that upper semicontinuous set-valued maps with compact 
images are locally bounded. 
\end{remark}
 
Next, we define the concept of backward solutions to \eqref{eq.1}. 

\begin{definition} [Backward solutions to \eqref{eq.1}] A function $\psi : \dom \psi \rightarrow \mathbb{R}^n$ starting from $x_o \in \mathbb{R}^n$ is a backward solution to \eqref{eq.1} if there exists a solution $\phi$ in the sense of Definition \ref{def.CS}, starting from $x_o$, to the system
\begin{align} \label{eq.1back} 
\dot{x} \in - F(x) \qquad x \in \mathbb{R}^n
\end{align}
such that $\dom \phi = - \dom \psi$ and $\psi(t,x_o) = \phi(-t,x_o)$ for all $t \in \dom \psi$. 
\end{definition}

Furthermore, for the system in \eqref{eq.1}, we introduce the reachability map $R : \mathbb{R} \times \mathbb{R}^n \rightrightarrows \mathbb{R}^n$ as follows:
\begin{itemize}
\item For each 
$(t,x) \in \mathbb{R}_{\geq 0} \times \mathbb{R}^n$,  
\begin{align}
\hspace{-0.4cm} R(t,x) := \{ \phi(s,x): \phi \in \mathcal{S}(x), ~ 
s \in \dom \phi \cap [0,t] \}, \label{eq.R} 
\end{align}
\item For each $(t,x) \in \mathbb{R}_{< 0} \times \mathbb{R}^n$,
\begin{align}
\hspace{-1cm} R(t,x) :=  \{ \phi(s,x): \phi \in \mathcal{S}^{\textrm{back}}(x), ~ 
s \in \dom \phi \cap [t,0] \}, \label{eq.Rbw} 
\end{align}
\end{itemize}
where $\mathcal{S}(x)$ is the set 
of maximal solutions to \eqref{eq.1} starting from $x$ and $\mathcal{S}^{\textrm{back}}(x)$ is the set of maximal backward solutions to \eqref{eq.1} starting from $x$. In simple words, when $t \geq 0$, the set $R(t,x)$ includes all the elements reached by the solutions to \eqref{eq.1} starting from $x$ over the interval $[0,t]$. Similarly, when $t < 0$, the set $R(t,x)$ includes all the elements reached by the backward solutions to \eqref{eq.1} starting from $x$ over interval $[t,0]$. 

Finally, given system \eqref{eq.1} and a set $X_o \subset \mathbb{R}^n$, we introduce the scalar function $B$ defined for each $(t,x) \in \mathbb{R}_{\geq 0} \times \mathbb{R}^n$ by 
\begin{align} 
 B(t,x) = \inf \{|y|_{X_o} : y \in R(-t,x) \}. \label{eqbarup} 
\end{align}
Note that the function  $B$ in \eqref{eqbarup} is inspired by the converse Lyapunov stability theorem in \cite{Persidskiipap}.   As we show in this section, when system \eqref{eq.1} is safe with respect to $(X_o,X_u)$, the function $B$ in \eqref{eqbarup} becomes a time-varying barrier function candidate with respect to 
$(X_o,X_u)$ in the sense of Definition \ref{defTVBf}. Furthermore, we also show that the scalar function $B$ in \eqref{eqbarup} inherits the regularity properties of the reachability map $R$.

\subsection{When $F$ Satisfies Assumption \ref{item:difinc}} \label{sec.3a}

In the following result, for system \eqref{eq.1} satisfying Assumption \ref{item:difinc}, we show that the reachability map $R$ is outer semicontinuous, locally bounded, and continuous with respect to time. A proof is in the appendix.

\begin{proposition} \label{propout}
Suppose that the system in \eqref{eq.1} is forward complete and $F$ satisfies Assumption \ref{item:difinc}. Then,
\blue{\begin{enumerate}
\item The map $R$ is outer semicontinuous and 
locally bounded on $\mathbb{R}_{\geq 0} \times \mathbb{R}^n$.
\item The map $t \mapsto R(t,x)$ is continuous on $\mathbb{R}_{\geq 0}$,  for all $x \in \mathbb{R}^n$.
\end{enumerate}}
\end{proposition}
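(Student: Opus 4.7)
The plan is to derive all three conclusions from the classical compactness theorem for solution sets of differential inclusions whose right-hand side is upper semicontinuous with convex, compact images (see, e.g., \cite{aubin2012differential}), combined with the uniform boundedness afforded by Assumption \ref{item:difinc} together with forward completeness.

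First, I would dispose of local boundedness of $R$. Fix $(t_o,x_o) \in \mathbb{R}_{\geq 0} \times \mathbb{R}^n$. By Remark \ref{remsetval}, $F$ is locally bounded, so $F$ admits a uniform bound on any compact superset of the admissible ranges. A standard Gr\"onwall-type estimate then produces a compact set $\Omega$ containing the range of every solution issuing from a compact neighborhood $V$ of $x_o$ over the interval $[0,t_o+1]$. Hence $R(t,x) \subset \Omega$ for every $(t,x)$ in a neighborhood of $(t_o,x_o)$, yielding local boundedness.

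Next, for outer semicontinuity of $R$, I would take sequences $(t_i,x_i) \to (t,x)$ and $y_i \to y$ with $y_i \in R(t_i,x_i)$, and realize each $y_i = \phi_i(s_i,x_i)$ for some $\phi_i \in \mathcal{S}(x_i)$ and $s_i \in [0,t_i]$. By the first step, the $\phi_i$ are uniformly bounded on $[0,t+1]$, and since $F$ is bounded on their ranges the family $\{\phi_i\}$ is also equicontinuous. Invoking the compactness theorem for solutions of \eqref{eq.1}, extract a subsequence $\phi_{i_k}$ converging uniformly on compact subintervals to a solution $\phi \in \mathcal{S}(x)$. After a further extraction so that $s_{i_k} \to s \in [0,t]$, uniform convergence gives $\phi(s,x) = \lim_{k \to \infty} \phi_{i_k}(s_{i_k},x_{i_k}) = y$, hence $y \in R(t,x)$.

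For continuity of $t \mapsto R(t,x)$, outer semicontinuity is inherited directly from the previous item. For inner semicontinuity at $t_o \geq 0$, I would exploit the monotonicity $R(t_1,x) \subset R(t_2,x)$ for $0 \leq t_1 \leq t_2$ (immediate from \eqref{eq.R}): given $y = \phi(s,x) \in R(t_o,x)$ with $s \in [0,t_o]$, I set $s_i := \min(s,t_i) \in [0,t_i]$ and $y_i := \phi(s_i,x) \in R(t_i,x)$ for any sequence $t_i \to t_o$; absolute continuity of $\phi$ and $s_i \to s$ then give $y_i \to y$.

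I expect the main obstacle to be the compactness/closedness step underlying outer semicontinuity, namely, verifying that the uniform limit $\phi$ actually satisfies $\dot{\phi} \in F(\phi)$ almost everywhere. This requires passing to the limit in the differential inclusion using upper semicontinuity of $F$ together with the convexity of its values, typically through a Mazur-lemma argument applied to the derivatives. Additional care is needed because the $\phi_i$ emanate from distinct initial points $x_i$, so equicontinuity must be combined with pointwise convergence at $t = 0$ in order to activate Arzela--Ascoli uniformly across the whole family.
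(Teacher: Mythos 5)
Your overall architecture matches the paper's: both proofs ultimately rest on the compactness theorem for solution sets of differential inclusions with upper semicontinuous, compact- and convex-valued right-hand sides (the paper channels it through Lemma \ref{lemAub} on $R^b$ and $\mathcal{A}$, you invoke it directly), your outer semicontinuity argument is essentially the paper's, and your inner semicontinuity argument via the monotonicity $R(t_1,x)\subset R(t_2,x)$ and continuity of an individual solution is a cleaner, direct version of the paper's contradiction argument. (Minor point: to conclude continuity in the sense of Definition \ref{deflusc} you should add that outer semicontinuity plus local boundedness yields upper semicontinuity, as in Remark \ref{remsetval}.)

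There is, however, one genuine gap, and it is load-bearing: the local boundedness step. You cannot get the compact set $\Omega$ by ``a standard Gr\"onwall-type estimate,'' because Assumption \ref{item:difinc} only gives that $F$ is \emph{locally} bounded, not that it has linear growth. Your phrase ``a uniform bound on any compact superset of the admissible ranges'' is circular: you need to already know that the solutions issuing from $V$ over $[0,t_o+1]$ stay in some compact set before local boundedness of $F$ gives you a bound on $F$ along them, and without a growth condition Gr\"onwall provides no a priori confinement. Forward completeness of each individual maximal solution does not by itself yield a bound that is uniform over all solutions from a compact set of initial conditions; ruling out a sequence of solutions whose suprema over $[0,t_o+1]$ blow up requires exactly the compactness/closedness machinery for the solution set that you invoke later (this is what the paper's Lemma \ref{lemAub}, quoted from \cite{aubin2012differential}, packages, and why the paper proves local boundedness of $R^b$ by contradiction using local boundedness and outer semicontinuity of $x\mapsto\mathcal{A}(\bar t,x)$ rather than by an integral estimate). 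Since your equicontinuity and Arzel\`a--Ascoli step in the outer semicontinuity argument depends on this uniform bound, the fix is not optional: replace the Gr\"onwall step by the contradiction argument based on the compactness of the solution set (or cite the local boundedness of the solution map directly), after which the rest of your proof goes through.
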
 

Along the lines of \cite[Theorem 1.4.16]{aubin2009set}, given a set-valued map $\Pi : \mathbb{R}^m \rightrightarrows \mathbb{R}^n$ and a set $X \subset \mathbb{R}^n$, we show how the marginal function $f : \mathbb{R}^m \rightarrow \mathbb{R}$ given by
\begin{align} \label{eq.margin}
f(z) := \inf \{ |y|_X : y \in \Pi(z) \} 
\end{align}
inherits the regularity of the set-valued map $R$. A proof is in the Appendix. 

\begin{lemma} \label{lem3}
Consider a locally bounded set-valued map $\Pi : \mathbb{R}^m \rightrightarrows \mathbb{R}^n$ such that $\Pi(z)$ is nonempty for all $z \in \mathbb{R}^m$. Consider a closed and nonempty set $X \subset \mathbb{R}^n$ and the marginal function 
$f : \mathbb{R}^m \rightarrow \mathbb{R}$ in \eqref{eq.margin}.
The following hold:
 \begin{enumerate}
\item If $\Pi$ is outer semicontinuous, then $f$ is lower semicontinuous. 
\item If $\Pi$ is lower semicontinuous, then $f$ is upper semicontinuous.
\item If $\Pi$ is locally Lipschitz, then so is $f$.
\end{enumerate}
\end{lemma}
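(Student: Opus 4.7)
The plan is to prove each conclusion separately; all three arguments share the observation that the map $y \mapsto |y|_X$ is $1$-Lipschitz (an immediate consequence of the triangle inequality in $\mathbb{R}^n$), so any regularity of $\Pi$ transfers through the infimum into corresponding regularity of $f$.

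For part (1), I would take any $z \in \mathbb{R}^m$ and a sequence $z_i \to z$, and pass to a subsequence (still denoted $z_i$) along which $f(z_i) \to \liminf_{i \to \infty} f(z_i)$. Fix $\epsilon > 0$ and, using nonemptiness of $\Pi(z_i)$ and finiteness of the infimum, select $y_i \in \Pi(z_i)$ with $|y_i|_X \leq f(z_i) + \epsilon$. Local boundedness of $\Pi$ on some neighborhood of $z$ keeps $\{y_i\}$ bounded for large $i$, so Bolzano--Weierstrass yields a further subsequence $y_{i_k} \to y^\star \in \mathbb{R}^n$. Outer semicontinuity of $\Pi$ at $z$ gives $y^\star \in \Pi(z)$, and continuity of $y \mapsto |y|_X$ gives $|y^\star|_X = \lim_k |y_{i_k}|_X \leq \liminf_{i\to\infty} f(z_i) + \epsilon$. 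Since $f(z) \leq |y^\star|_X$ by definition and $\epsilon > 0$ is arbitrary, this shows $f(z) \leq \liminf_{i \to \infty} f(z_i)$.

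For part (2), fix $\epsilon > 0$ and pick $y \in \Pi(z)$ with $|y|_X \leq f(z) + \epsilon$. Inner semicontinuity of $\Pi$ at $z$, in the sense of Definition \ref{deflusc}, yields a neighborhood $U(z)$ such that for each $z' \in U(z)$ there exists $y' \in \Pi(z')$ with $|y' - y| \leq \epsilon$. For $i$ large enough, $z_i \in U(z)$ produces some $y_i \in \Pi(z_i)$ with $|y_i - y| \leq \epsilon$, whence $f(z_i) \leq |y_i|_X \leq |y|_X + \epsilon \leq f(z) + 2\epsilon$; taking $\limsup$ and then $\epsilon \to 0^+$ establishes upper semicontinuity. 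For part (3), fix a bounded $K_o \subset \mathbb{R}^m$ with Lipschitz constant $k > 0$ as in \eqref{eq.lipset}, pick $z_1, z_2 \in K_o$, and for any $\epsilon > 0$ choose $y_1 \in \Pi(z_1)$ with $|y_1|_X \leq f(z_1) + \epsilon$. The inclusion $\Pi(z_1) \subset \Pi(z_2) + k|z_1 - z_2|\mathbb{B}$ provides $y_2 \in \Pi(z_2)$ with $|y_1 - y_2| \leq k|z_1 - z_2|$, so $f(z_2) \leq |y_2|_X \leq |y_1|_X + k|z_1 - z_2| \leq f(z_1) + \epsilon + k|z_1 - z_2|$. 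Letting $\epsilon \to 0^+$ and swapping the roles of $z_1$ and $z_2$ gives $|f(z_1) - f(z_2)| \leq k|z_1 - z_2|$.

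The main obstacle is part (1), where all three hypotheses on $\Pi$ must cooperate: nonemptiness allows selecting $y_i \in \Pi(z_i)$ that approximates the infimum defining $f(z_i)$; local boundedness keeps $\{y_i\}$ precompact so a limit point exists; and outer semicontinuity certifies that limit point as a member of $\Pi(z)$, which in turn upper-bounds $f(z)$. Without any one of these, the approximating points could drift to infinity or cluster outside $\Pi(z)$, breaking the estimate. Parts (2) and (3) are comparatively mechanical, essentially transcribing the corresponding definitions through the $1$-Lipschitz distance function $y \mapsto |y|_X$.
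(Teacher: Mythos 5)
Your proof is correct and follows the same essential structure as the paper's: a convergent subsequence of (approximate) minimizers for item~1, a nearby selection via the neighborhood from lower semicontinuity for item~2, and the Lipschitz inclusion $\Pi(z_1)\subset\Pi(z_2)+k|z_1-z_2|\mathbb{B}$ for item~3. The one substantive difference is that you work with $\epsilon$-approximate minimizers $y_i\in\Pi(z_i)$ satisfying $|y_i|_X\leq f(z_i)+\epsilon$, whereas the paper selects exact minimizers $w_i:=\arg\inf_{y\in\Pi(z_i)}|y|_X$. For item~1 this makes no difference, since outer semicontinuity together with local boundedness forces $\Pi(z_i)$ to be compact, so the infimum is attained. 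For item~2, however, lower semicontinuity alone does not make $\Pi(z_i)$ closed, so the paper's use of $\arg\inf$ is not actually justified by its stated hypotheses; your $\epsilon$-approximation quietly repairs that gap and gives a direct proof rather than the paper's argument by contradiction. Your item~3 is also slightly more direct, working with the inclusion form \eqref{eq.lipset} rather than routing through the Hausdorff-distance identity $f(z)=\inf_{w\in X}|w|_{\Pi(z)}$ as the paper does, though both deliver the same Lipschitz bound.
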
 

The following result is a direct consequence of Proposition \ref{propout} and Lemma \ref{lem3}.

\begin{proposition} \label{cor1up}
Suppose the system in \eqref{eq.1} is \blue{backward complete} and that $F$ satisfies Assumption \ref{item:difinc}. Consider a closed set $X_o \subset \mathbb{R}^n$ and the function $B$ in \eqref{eqbarup}. The following hold:
\begin{enumerate}
\item The function $B$ is lower semicontinuous.
\item The map $t \mapsto B(t,x)$ is continuous.
\end{enumerate} 
\end{proposition}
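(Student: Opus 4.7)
The plan is to rewrite $B$ as a marginal function of an appropriately time-reversed reachability map and then transfer regularity from that map using Lemma \ref{lem3}.

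First, I would connect $R(-\cdot,\cdot)$ on $\mathbb{R}_{\geq 0} \times \mathbb{R}^n$ to the forward reachability map of the time-reversed inclusion $\dot x \in -F(x)$. The substitution $s \mapsto -s$ sets up a bijection between maximal backward solutions of \eqref{eq.1} starting from $x$ and maximal forward solutions of $\dot x \in -F(x)$ starting from $x$, so backward completeness of \eqref{eq.1} amounts to forward completeness of $\dot x \in -F(x)$. Moreover, $-F$ inherits upper semicontinuity with compact, convex images from Assumption \ref{item:difinc}. Hence, defining
\[
\Pi(t,x) := R(-t,x), \qquad (t,x) \in \mathbb{R}_{\geq 0} \times \mathbb{R}^n,
\]
$\Pi$ coincides with the forward reachability map of $\dot x \in -F(x)$. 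Applying Proposition \ref{propout} to this reversed inclusion yields that $\Pi$ is outer semicontinuous and locally bounded on $\mathbb{R}_{\geq 0} \times \mathbb{R}^n$, and that $t \mapsto \Pi(t,x)$ is continuous for every $x \in \mathbb{R}^n$.

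Next, $B$ is precisely the marginal function in \eqref{eq.margin} associated with the set-valued map $\Pi$ and the closed set $X_o$. The images of $\Pi$ are nonempty because $\Pi(t,x)$ always contains $x$ (take $s=0$). Item (1) then follows directly from Lemma \ref{lem3}(1) applied with $z=(t,x)$: outer semicontinuity and local boundedness of $\Pi$ imply lower semicontinuity of $B$ on $\mathbb{R}_{\geq 0} \times \mathbb{R}^n$.

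For item (2), I would fix $x \in \mathbb{R}^n$ and consider the one-variable map $t \mapsto \Pi(t,x)$. Outer semicontinuity and local boundedness in $t$ are inherited from the corresponding joint properties of $\Pi$ by specializing to constant sequences in the spatial argument, while lower semicontinuity in $t$ is delivered by the continuity conclusion of Proposition \ref{propout}. Applying Lemma \ref{lem3}(1) and then Lemma \ref{lem3}(2) to this one-variable map yields, respectively, lower and upper semicontinuity of $t \mapsto B(t,x)$, hence continuity. The only non-routine step in the argument is the bookkeeping for the time reversal, namely verifying that $-F$ still satisfies Assumption \ref{item:difinc} and that maximal backward solutions of \eqref{eq.1} are in one-to-one correspondence with maximal forward solutions of $\dot x \in -F(x)$; this is essential to legitimately invoke Proposition \ref{propout} in the backward direction.
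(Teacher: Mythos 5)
Your argument is correct and mirrors the paper's proof: both reduce to applying Proposition \ref{propout} to the time-reversed inclusion $\dot x \in -F(x)$ (after noting $-F$ inherits Assumption \ref{item:difinc} and that backward completeness of \eqref{eq.1} is forward completeness of \eqref{eq.1back}), then feed the resulting regularity of the reachability map into Lemma \ref{lem3}. The only cosmetic difference is that for item (2) you re-derive lower semicontinuity of $t \mapsto B(t,x)$ via Lemma \ref{lem3}(1) on the one-variable map, whereas the paper simply reuses the joint lower semicontinuity already established in item (1).
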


\begin{proof}
The backward solutions to 
\eqref{eq.1} starting from $x \in \mathbb{R}^n$ are the forward solutions to \eqref{eq.1back} starting from $x$.   Furthermore, having  $F$ satisfying Assumption \ref{item:difinc} implies that $-F$ satisfies Assumption \ref{item:difinc}. Hence, using Proposition \ref{propout}, we conclude that the reachability map $R$ is outer semicontinuous and locally bounded. Next, using the first item in Lemma \ref{lem3}, 
we conclude that $B$ is lower semicontinuous. 

Furthermore, using Proposition \ref{propout}, we conclude that the map $t \mapsto R(t,x)$ is continuous; hence, lower semicontinuous. Next, using the second item in Lemma \ref{lem3}, we conclude that  the map $t \mapsto B(t,x)$ is upper semicontinuous. Finally, since we already showed that $B$ is lower  semicontinuous, we conclude that $t \mapsto B(t,x)$ is continuous.
\end{proof}

We are now ready to provide a solution to Problem \ref{prob1} when $F$ satisfies Assumption \ref{item:difinc}.

\begin{theorem}  \label{thm2}
Suppose the system in \eqref{eq.1} is \blue{backward complete} and $F$ satisfies Assumption
\ref{item:difinc}. Consider initial and unsafe sets 
$(X_o, X_u) \subset \mathbb{R}^n \times \mathbb{R}^n$ such that $X_o$ is closed and $X_o \cap X_u = \emptyset$.  
System \eqref{eq.1} is safe with respect to $(X_o, X_u)$ if and only if there exists a lower semicontinuous time-varying barrier function candidate $B : \mathbb{R}_{\geq 0} \times \mathbb{R}^n \rightarrow \mathbb{R}$, with $t \mapsto B(t,x)$ continuous, such that \ref{item:starstar} holds.
\end{theorem}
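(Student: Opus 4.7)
The plan is to verify the four properties of the candidate $B$ in \eqref{eqbarup}, with sufficiency following immediately from Theorem \ref{thm2pre}. First, I would establish the sign conditions \eqref{eq.2a+}--\eqref{eq.2b+}. For $x \in X_o$, taking $y = x \in R(-t,x)$ (via the trivial zero-length solution at $s=0$) and using $|x|_{X_o} = 0$ gives $B(t,x) \leq 0$; combined with the obvious $B \geq 0$, this yields $B(t,x) = 0$ on $\mathbb{R}_{\geq 0} \times X_o$. For $x \in X_u$, safety of \eqref{eq.1} with respect to $(X_o,X_u)$ forces $R(-t,x) \cap X_o = \emptyset$: if some $y \in X_o$ were in $R(-t,x)$, the corresponding forward solution from $y$ would reach $x \in X_u$, contradicting safety. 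Since Assumption~\ref{item:difinc} and backward completeness, via Proposition \ref{propout} applied to $-F$, guarantee that $R(-t,x)$ is compact (outer semicontinuous plus locally bounded values), and $X_o$ is closed, the infimum of $|\cdot|_{X_o}$ over the disjoint compact set $R(-t,x)$ is strictly positive.

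The regularity claims are delivered essentially for free by Proposition~\ref{cor1up}: $B$ is lower semicontinuous and $t \mapsto B(t,x)$ is continuous for every fixed $x$. The nontrivial work is then the monotonicity property \ref{item:starstar}. My plan here is to prove the strictly stronger statement that for \emph{every} solution $\phi$ to \eqref{eq.1} starting from any $x_o \in \mathbb{R}^n$, the map $t \mapsto B(t,\phi(t,x_o))$ is nonincreasing on $\dom \phi$; this trivially implies \ref{item:starstar} regardless of the choice of neighborhood $U(K)$.

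The key is a monotone set inclusion for the backward reachable set along a forward trajectory. Fix $0 \leq t_1 \leq t_2$ in $\dom\phi$ and set $x_i := \phi(t_i, x_o)$. Given any $y \in R(-t_1, x_1)$, by definition there exists a backward solution $\psi$ from $x_1$ with $\psi(s) = y$ for some $s \in [-t_1, 0]$. Concatenating the reverse of $\phi|_{[t_1,t_2]}$ (which goes backward from $x_2$ to $x_1$ in time $t_2-t_1$) with $\psi$ produces a backward solution from $x_2$ that reaches $y$ at time $-(t_2-t_1) + s \in [-t_2, 0]$; concatenation of absolutely continuous trajectories remains a solution to the backward differential inclusion under Assumption~\ref{item:difinc}. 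Hence $y \in R(-t_2, x_2)$, which proves the inclusion
\begin{align*}
R(-t_1, \phi(t_1,x_o)) \subseteq R(-t_2, \phi(t_2,x_o)).
\end{align*}
Taking infima of $|\cdot|_{X_o}$ on both sides reverses the inequality and yields $B(t_2,\phi(t_2,x_o)) \leq B(t_1,\phi(t_1,x_o))$, which is the desired monotonicity.

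The main obstacle I anticipate is making the concatenation argument fully rigorous as a backward solution in the sense of Definition of backward solutions, since joining two absolutely continuous arcs requires checking that the differential inclusion is satisfied almost everywhere at the junction; this is handled by absolute continuity of the concatenation together with the inclusion holding a.e.\ on each piece. The positivity claim $B(t,x) > 0$ on $\mathbb{R}_{\geq 0}\times X_u$ also deserves care, because $R(-t,x)$ being merely closed would allow the infimum to be zero along a diverging sequence; this is precisely why compactness of $R(-t,x)$, delivered by Proposition~\ref{propout}, is essential. Everything else is a direct assembly of Proposition~\ref{propout}, Lemma~\ref{lem3}, and Proposition~\ref{cor1up}.
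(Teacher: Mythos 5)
Your proposal is correct and follows essentially the same route as the paper: sufficiency via Theorem \ref{thm2pre}, the candidate \eqref{eqbarup}, the sign conditions from safety, monotonicity from the inclusion $R(-t_1,\phi(t_1,x_o)) \subseteq R(-t_2,\phi(t_2,x_o))$, and regularity from Proposition \ref{cor1up}. You merely make explicit two steps the paper leaves implicit --- compactness of $R(-t,x)$ to get strict positivity on $X_u$, and the concatenation argument behind the reachable-set inclusion --- both of which are correct.
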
 

\begin{proof} 
The sufficiency part follows using  Theorem \ref{thm2pre}. To prove the necessary part, we use the barrier function candidate $B$ in \eqref{eqbarup}. Since the set $X_o$ is closed and system \eqref{eq.1} is safe and the system \eqref{eq.1} is forward complete, we conclude that the backward solutions to \eqref{eq.1} starting from $x \in X_u$ will neither reach nor converge to the set $X_o$ in finite time; hence, $B(t,x) > 0$ for all $(t,x) \in \mathbb{R}_{\geq 0} \times X_u$. Then, \eqref{eq.2a+} holds. Furthermore, \eqref{eq.2b+} is trivially satisfied under \eqref{eqbarup}. Then, $B$ is a time-varying barrier function candidate for safety with respect to $(X_o,X_u)$.  Next, we show that the barrier function candidate $B$ is monotonically nonincreasing along the solutions to \eqref{eq.1}. Indeed, consider a solution $\phi : [t, t+h] \rightarrow \mathbb{R}^n$ to \eqref{eq.1} starting from $x_o \in \mathbb{R}^n$ at $t = 0$, for some $h > 0$. Note that $B(t+h , \phi(t+h,x_o)) = \inf \{|y|_{X_o} : y \in R(-t-h,\phi(t+h,x_o))\}$. Furthermore, we use the fact that $R(-t,\phi(t,x_o)) \subset R(-t-h,\phi(t+h,x_o))$, which implies that
\begin{align*}
& B(t+h, \phi(t+h,x_o))\\ & 
= \inf \{ |y|_{X_o} : y \in R(-t-h,\phi(t+h,x_o)) \} 
\\ & \leq  \inf \{ |y|_{X_o} : y \in R(-t,\phi(t,x_o)) \} =   B(t,\phi(t,x_o)).
\end{align*}
Hence, the barrier function candidate $B$ does not increase along the solution $\phi$. Hence, \ref{item:starstar} holds. 
Finally, the fact that $B$ is lower semicontinuous and $t \mapsto B(t,x)$ is continuous follows from Proposition \ref{cor1up}. 
\end{proof}

\begin{example}[Example \ref{expcount} revisited]
Consider system \eqref{cont.exp} in Example \ref{expcount} with the sets $(X_o,X_u)$ as in \eqref{eqSetsCounterExp}. According to the proof Theorem \ref{thm2}, the function 
$B : \mathbb{R}_{\geq 0} \times \mathbb{R}^2 \rightarrow \mathbb{R}$ given by  
\begin{align} \label{eq.sert.counter}
B(t,x) = \phi(-t,|x|),
\end{align}
where $\phi$ is the backward solution to $\dot{r} = (r^2/2) \sin^2 (1/r)$ starting from $|x|$, is a lower semicontinuous time-varying barrier function candidate satisfying \ref{item:starstar}. Indeed, the function $B$ in \eqref{eq.sert.counter} coincides with the time-varying barrier function candidate $B$ in \eqref{eqbarup}. Furthermore, the explicit formula of $B$ in \eqref{eq.sert.counter} is given by
 \begin{equation}
\label{eq.sert.counter1}
\begin{aligned} 
& B(t,x) = \\ & 
\hspace{-0.4cm} 
\left\{
\begin{matrix}
0 & \text{if} ~ x = 0,
\\
\frac{1}{k \pi} &  \text{if} ~ \frac{1}{|x|} = k \pi,~  k \in \mathbb{N}^* 
\\
\frac{1}{arc\cot \left( \cot\left( \frac{1}{|x|} \right) - \frac{t}{2} \right) + k \pi } & \text{if} ~ \frac{1}{|x|} \in (k \pi, (k+1) \pi),~  k \in \mathbb{N},
\end{matrix}
\right.
\end{aligned}
\end{equation}
where $\cot$ is the cotangent function and $arc\cot$ is its inverse function; namely, 
$ arc\cot(\cot(x)) = x$ for all $x \in (0,\pi)$.
\end{example}

\subsection{When $F$ is Locally Lipschitz} \label{sec.3b}

For system \eqref{eq.1} with $F$ locally Lipschitz and having closed images, one can use the well-known Filippov Theorem (see Lemma \ref{lemFil} in the appendix) to conclude that the reachability map  $R$ is also locally Lipschitz. In this setting, we have the following result.

\begin{proposition} \label{proplip.1} 
Suppose that system \eqref{eq.1} is forward complete and $F$ satisfies Assumption \ref{item:difinc} and is locally Lipschitz. Then, $R$ is locally Lipschitz on \blue{$\mathbb{R}_{\geq 0} \times \mathbb{R}^n$}.
\end{proposition}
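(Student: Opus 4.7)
The plan is to exploit Filippov's Theorem (Lemma \ref{lemFil}) to control variations in the initial condition and the local boundedness of $F$ to control variations in time, and then combine the two bounds via the triangle inequality applied to the Hausdorff distance $d_H$.

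Fix an arbitrary nonempty bounded subset $K_o \subset \mathbb{R}_{\geq 0} \times \mathbb{R}^n$ and let $T := \sup\{t : (t,x) \in K_o\} < \infty$. First, I would show that the set $\bigcup_{(t,x) \in K_o} R(t,x)$ is contained in a compact set $K_1 \subset \mathbb{R}^n$. This follows from forward completeness together with the local boundedness of $R$ established in Proposition \ref{propout}, applied uniformly on the (bounded) closure of $K_o$. On a bounded open neighborhood of $K_1$, the local Lipschitz assumption together with Assumption \ref{item:difinc} yields a Lipschitz constant $L > 0$ for $F$ and a uniform bound $M > 0$ with $|\eta| \leq M$ for all $\eta \in F(x)$ and all $x$ in that neighborhood.

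Next, for $(t_1, x_1), (t_2, x_2) \in K_o$ with, without loss of generality, $t_1 \leq t_2$, I would split
\begin{align*}
d_H(R(t_1,x_1), R(t_2,x_2)) \leq d_H(R(t_1,x_1), R(t_2,x_1)) + d_H(R(t_2,x_1), R(t_2,x_2)).
\end{align*}
For the first (time) term, the inclusion $R(t_1, x_1) \subset R(t_2, x_1)$ makes one side of $d_H$ vanish; and any $y = \phi(s, x_1) \in R(t_2, x_1) \setminus R(t_1, x_1)$ with $s \in (t_1, t_2]$ satisfies $|y - \phi(t_1, x_1)| \leq M(s - t_1) \leq M|t_2 - t_1|$, so this term is bounded by $M|t_2 - t_1|$. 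For the second (initial-condition) term, I would apply Filippov's Theorem: for each $y = \phi_1(s, x_1) \in R(t_2, x_1)$ with $s \in [0, t_2]$, there exists a solution $\phi_2$ starting from $x_2$ with $|\phi_1(s, x_1) - \phi_2(s, x_2)| \leq e^{Ls}|x_1 - x_2| \leq e^{LT}|x_1 - x_2|$, and since $\phi_2(s, x_2) \in R(t_2, x_2)$, this yields $|y|_{R(t_2, x_2)} \leq e^{LT}|x_1 - x_2|$. By symmetry in $x_1, x_2$, one obtains $d_H(R(t_2, x_1), R(t_2, x_2)) \leq e^{LT}|x_1 - x_2|$. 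Combining, $d_H(R(t_1, x_1), R(t_2, x_2)) \leq M|t_2 - t_1| + e^{LT}|x_1 - x_2| \leq k \, |(t_1,x_1) - (t_2,x_2)|$ with $k := M + e^{LT}$, proving the claim.

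The main technical obstacle is the first step: establishing uniform boundedness of the reachable sets emanating from points in $K_o$. This rests on forward completeness combined with the outer semicontinuity/local boundedness of $R$ from Proposition \ref{propout}, which together ensure that trajectories launched from any compact subset of initial data on a bounded time horizon $[0,T]$ stay in a common compact set. Once $K_1$ is secured, the extraction of the Lipschitz constant $L$ and the uniform bound $M$ for $F$, together with the application of Filippov on $[0,T] \times K_1$, is standard.
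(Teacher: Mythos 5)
Your argument is correct and follows essentially the same route as the paper's: Filippov's Theorem (Lemma~\ref{lemFil}) controls the variation in the initial condition, local boundedness of $F$ controls the variation in time, and a triangle-type decomposition combines the two, after first securing a compact set containing the relevant reachable sets. The only cosmetic difference is that you work directly with $R$ and the Hausdorff-distance triangle inequality, whereas the paper first establishes the estimate for the endpoint map $R^b$ (moving in $x$ via Filippov, then in $t$ via the speed bound) and then lifts it to $R$; your ordering is reversed and you bypass the intermediate map $R^b$, but the two splittings are mathematically interchangeable.
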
 

Next, using Lemma \ref{lem3}, we show that, when $F$ is locally Lipschitz, $B$ in \eqref{eqbarup} is locally Lipschitz. 

\begin{proposition} \label{propup1}
Suppose that the system in \eqref{eq.1} is \blue{backward complete} and $F$ is locally Lipschitz with closed images. Let $X_o \subset \mathbb{R}^n$ be closed. Then, $B$ in \eqref{eqbarup} is locally Lipschitz. 
\end{proposition}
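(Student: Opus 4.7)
The plan is to deduce local Lipschitz continuity of $B$ by combining Proposition~\ref{proplip.1} with item~(3) of Lemma~\ref{lem3}, after re-expressing the backward reachability map appearing in \eqref{eqbarup} as a forward reachability map of a suitable auxiliary system. The whole argument is essentially a reduction; once it is set up, both referenced results are invoked as black boxes.

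Concretely, I would first rewrite $R(-t,x)$ for $t \geq 0$ in terms of the time-reversed inclusion $\dot{x} \in -F(x)$. Since by definition a backward solution to \eqref{eq.1} starting from $x$ is exactly a forward solution to $\dot{x} \in -F(x)$ starting from $x$, one has $R(-t,x) = \widetilde{R}(t,x)$ for every $(t,x) \in \mathbb{R}_{\geq 0} \times \mathbb{R}^n$, where $\widetilde{R}$ denotes the forward reachability map of $\dot{x} \in -F(x)$, defined as in \eqref{eq.R} with $\mathcal{S}(x)$ replaced by the set of maximal solutions to the reversed inclusion. Consequently,
\begin{equation*}
B(t,x) = \inf\{|y|_{X_o} : y \in \widetilde{R}(t,x)\} \qquad \forall (t,x) \in \mathbb{R}_{\geq 0} \times \mathbb{R}^n.
\end{equation*}

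Next, I would verify that $-F$ satisfies the hypotheses of Proposition~\ref{proplip.1}. The local Lipschitz property and closedness of the images carry over from $F$ to $-F$ directly from Definition~\ref{deflip}, since \eqref{eq.lipset} is invariant under negation. Moreover, the standing Assumption~\ref{item:difinc} of Section~\ref{sec.3} — upper semicontinuity together with compact and convex images — also transfers from $F$ to $-F$, because all three properties are preserved when the images are reflected through the origin. Finally, backward completeness of \eqref{eq.1} is, tautologically, forward completeness of $\dot{x} \in -F(x)$. Therefore Proposition~\ref{proplip.1} applies to the reversed inclusion, yielding that $\widetilde{R}$ is locally Lipschitz on $\mathbb{R}_{\geq 0} \times \mathbb{R}^n$.

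With this in hand, I would conclude by applying Lemma~\ref{lem3}(3) with $\Pi := \widetilde{R}$ and $X := X_o$. The map $\widetilde{R}$ is locally bounded (as any locally Lipschitz set-valued map is) and has nonempty values on $\mathbb{R}_{\geq 0} \times \mathbb{R}^n$ by forward completeness of the reversed system, while $X_o$ is closed and nonempty by hypothesis. Lemma~\ref{lem3}(3) then gives that the marginal function $(t,x) \mapsto \inf\{|y|_{X_o} : y \in \widetilde{R}(t,x)\}$, which coincides with $B$, is locally Lipschitz on $\mathbb{R}_{\geq 0} \times \mathbb{R}^n$. The main — indeed essentially only — technical point is the identification of the backward reachability map in \eqref{eqbarup} with a forward reachability map of the reversed inclusion to which Proposition~\ref{proplip.1} can be applied; once this reduction is made, neither a fresh Filippov-type estimate nor a new marginal-function argument is required.
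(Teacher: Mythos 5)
Your proposal is correct and follows essentially the same route as the paper: the paper's proof likewise identifies the backward solutions of \eqref{eq.1} with the forward solutions of \eqref{eq.1back}, observes that $-F$ inherits the Lipschitz property and Assumption \ref{item:difinc}, invokes Proposition \ref{proplip.1} for the reversed inclusion, and concludes via item 3 of Lemma \ref{lem3}. Your write-up simply makes the reduction more explicit than the paper's terse version.
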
 

\begin{proof}
The backward solutions to \eqref{eq.1} starting from $x \in \mathbb{R}^n$ are the solutions to \eqref{eq.1back} starting from $x$. Furthermore, having  $F$ locally Lipschitz and satisfying Assumption \ref{item:difinc} imply that 
$-F$ is locally Lipschitz and satisfies Assumption \ref{item:difinc}.  Hence, using Proposition \ref{proplip.1}, we conclude that the reachability map $R$ is locally Lipschitz. Finally, using the third item in Lemma \ref{lem3}, we conclude that $B$ is locally Lipschitz.
\end{proof}  

We are now ready to present an equivalent characterization of safety solving Problem \ref{prob1} when $F$ is locally Lipschitz.

\begin{theorem}  \label{thm3}
Suppose the system in \eqref{eq.1} is \blue{backward complete},  and $F$ satisfies Assumption \ref{item:difinc} and is locally Lipschitz. Consider the initial and unsafe sets $(X_o, X_u) \subset \mathbb{R}^n \times \mathbb{R}^n$ such that $X_o$ is closed and $X_o \cap X_u = \emptyset$. System \eqref{eq.1} is safe with respect to $(X_o, X_u)$ 
if and only if there exists a locally Lipschitz time-varying barrier function candidate $B : \mathbb{R}_{\geq 0} \times \mathbb{R}^n \rightarrow \mathbb{R}$ such that \ref{item:starstar} holds.
\end{theorem}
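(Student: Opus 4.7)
The plan is to parallel the proof of Theorem \ref{thm2} almost line-by-line, substituting the Lipschitz machinery of Section \ref{sec.3b} for the (lower semi)continuity machinery of Section \ref{sec.3a}. The candidate barrier function is again
\[ B(t,x) := \inf\{|y|_{X_o} : y \in R(-t,x)\}, \]
and the only new ingredient is Proposition \ref{propup1}, which promotes lower semicontinuity of this $B$ to local Lipschitzness once $F$ is assumed locally Lipschitz.

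Sufficiency would be immediate: a locally Lipschitz $B$ is in particular lower semicontinuous with $t \mapsto B(t,x)$ continuous, so Theorem \ref{thm2pre} applies verbatim and yields safety of \eqref{eq.1} with respect to $(X_o,X_u)$. For necessity, I would reproduce the argument of Theorem \ref{thm2}: closedness of $X_o$, backward completeness, and safety together imply that no backward solution from $x \in X_u$ can reach $X_o$ in finite time --- otherwise, reversing time would produce a forward solution from $X_o$ reaching $X_u$, contradicting safety --- hence $B(t,x) > 0$ for $(t,x) \in \mathbb{R}_{\geq 0} \times X_u$, so \eqref{eq.2a+} holds, while \eqref{eq.2b+} is immediate from the trivial backward solution at $x \in X_o$. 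The monotonicity property \ref{item:starstar} follows from the semigroup inclusion $R(-t,\phi(t,x_o)) \subset R(-t-h,\phi(t+h,x_o))$ for $h \geq 0$, which yields $B(t+h,\phi(t+h,x_o)) \leq B(t,\phi(t,x_o))$ along any solution $\phi$ to \eqref{eq.1}.

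The only genuinely new step is the regularity claim, and I expect it to present no substantive obstacle. Since Assumption \ref{item:difinc} supplies compact --- hence closed --- images for $F$, and $F$ is locally Lipschitz by hypothesis, Proposition \ref{propup1} applies directly and gives local Lipschitzness of $B$. The one small detail worth flagging is that the hypotheses must propagate to the \emph{backward} reachability map appearing in \eqref{eqbarup}: this is fine because $-F$ inherits both Assumption \ref{item:difinc} and local Lipschitzness from $F$, so Proposition \ref{proplip.1} applied to $-F$ yields local Lipschitzness of the map $(t,x) \mapsto R(t,x)$ for $t \leq 0$, which is exactly the ingredient that Lemma \ref{lem3}(3) consumes to conclude.
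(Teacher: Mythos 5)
Your proposal is correct and follows essentially the same route as the paper: sufficiency via Theorem \ref{thm2pre}, necessity via the candidate $B$ in \eqref{eqbarup} with \eqref{eq.2a+}, \eqref{eq.2b+}, and \ref{item:starstar} established exactly as in the proof of Theorem \ref{thm2}, and local Lipschitzness supplied by Proposition \ref{propup1} (itself resting on Proposition \ref{proplip.1} applied to $-F$ and Lemma \ref{lem3}(3)). The detail you flag about the hypotheses propagating to $-F$ and the backward reachability map is precisely the point handled in the proof of Proposition \ref{propup1}.
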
 

\begin{proof} 
The proof of the sufficient part follows via Theorem \ref{thm2pre}. To prove the necessary part, we consider the barrier function candidate $B$ in \eqref{eqbarup}. The properties in \eqref{eq.2a+}, \eqref{eq.2b+}, and \ref{item:starstar} follow as in the proof of Theorem \ref{thm2}. Finally, using Proposition \ref{propup1} and Lemma \ref{lem3}, we conclude that the candidate $B$ in \eqref{eqbarup} is locally Lipschitz. 
\end{proof} 

In the following result, we provide a characterization of safety that, rather than 
using \ref{item:starstar}, uses an equivalent infinitesimal condition.
Before that, we first introduce the following lemma relating monotonicity of 
$B$ to infinitesimal inequalities. 

\begin{lemma} \label{lemmonotonicity}
Suppose the system in \eqref{eq.1} is such that $F$ satisfies Assumption \ref{item:difinc} and is locally Lipschitz.  
Let $B : \mathbb{R}^n \rightarrow \mathbb{R}$ be lower semicontinuous. 
Then, given an open set $O \subset \mathbb{R}^n$, the monotonicity property 
\begin{enumerate} [label={($\star'$)},leftmargin=*]
\item \label{item:star'} 
Along each solution $\phi$ starting from $x_o \in \mathbb{R}^n$ and satisfying $\phi(\dom \phi,x_o) \subset O$, the map $t \mapsto B(\phi(t,x_o))$ is nonincreasing; 
\hfill $\bullet$
\end{enumerate}
is satisfied if and only if
\begin{align} \label{eqqqbisss1}
\hspace{-0.2cm} \langle \zeta , \eta  \rangle \leq 0  \qquad \forall \zeta \in \partial_P B(x),~\forall \eta \in F(x),~\forall x \in O. 
\end{align}
\end{lemma}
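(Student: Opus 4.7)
The lemma is a localized version of the classical Clarke-Ledyaev-Stern-Wolenski strong monotone decrease theorem for lower semicontinuous functions along trajectories of a Lipschitz differential inclusion. My strategy is to prove each implication separately: the forward direction is a short computation built on Filippov's theorem combined with the proximal subgradient inequality, while the converse is the deeper strong invariance statement and will be the main obstacle.

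\emph{Necessity.} Assume $\star'$ and fix $x \in O$, $\zeta \in \partial_P B(x)$, and $\eta \in F(x)$. I would exhibit, for each sufficiently small $h > 0$, a forward solution $\phi_h$ to \eqref{eq.1} with $\phi_h(0, x) = x$ and $\phi_h(h, x) = x + h\eta + O(h^2)$ whose trajectory stays in $O$ on $[0, h]$. Such a $\phi_h$ is produced by Filippov's theorem (Lemma \ref{lemFil}) applied to the reference curve $y(t) := x + t\eta$, which satisfies $d(\dot y(t), F(y(t))) \leq k\, t\, |\eta|$ because $\eta \in F(x)$ and $F$ is locally $k$-Lipschitz near $x$. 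Openness of $O$ places $\phi_h([0,h], x)$ inside $O$ once $h$ is small. Combining the monotonicity in $\star'$, which yields $B(\phi_h(h, x)) \leq B(x)$, with the proximal subgradient inequality \eqref{eq.subgradbis} evaluated at $\phi_h(h, x)$, then dividing by $h$ and passing to the limit $h \to 0^+$, yields $\langle \zeta, \eta \rangle \leq 0$.

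\emph{Sufficiency.} Assume \eqref{eqqqbisss1} and suppose, for contradiction, that some solution $\phi$ stays in $O$ with $B(\phi(t_2, x_o)) > B(\phi(t_1, x_o))$ for some $t_1 < t_2$. I would recast the desired nonincrease as strong invariance of $\epi B \cap (O \times \mathbb{R})$ under the augmented inclusion $(\dot x, \dot r) \in F(x) \times \{0\}$ and apply the Clarke-Ledyaev-Stern-Wolenski strong invariance principle. Concretely, by adding a small quadratic penalty in $t$ to $B \circ \phi$, one forms a lower semicontinuous function on $[t_1, t_2]$ whose strict ordering from $t_1$ to $t_2$ is preserved; Ekeland's variational principle then locates a near-minimizer $t^\sharp$ and, via the proximal subgradient density theorem, a nearby point $y^\sharp$ admitting a subgradient $\zeta \in \partial_P B(y^\sharp)$ and velocity $\eta^\sharp \in F(y^\sharp)$ for which $\langle \zeta, \eta^\sharp \rangle > 0$, contradicting \eqref{eqqqbisss1}. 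The principal obstacle is precisely this sufficiency step: the mere lower semicontinuity of $B$ does not guarantee that $\partial_P B$ is nonempty along the actual trajectory $\phi$, forcing the use of Ekeland's principle together with proximal density to shift to a nearby $y^\sharp$; one must also ensure $y^\sharp$ remains inside $O$, which is where the openness of $O$ becomes essential.
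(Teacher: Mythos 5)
The paper contains no proof of this lemma: it simply records that ``Lemma~\ref{lemmonotonicity} is a particular case of \cite[Corollary 4.13]{Sanfelice:monotonicity}'' and leaves the entire argument to that external reference. So there is no in-paper proof to compare yours against; what can be assessed is whether your sketch follows the standard Clarke--Ledyaev--Stern--Wolenski route to such characterizations, and it does.

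Two remarks, one per direction. For necessity, note that Lemma~\ref{lemFil} as stated in this paper only gives Lipschitz dependence of reachable sets on the initial condition (it compares solutions starting from $x$ and from $y$), not the shadowing of an approximate trajectory by an exact one. What your argument actually requires is the full approximation form of Filippov's theorem applied to the reference curve $y(t) = x + t\eta$, whose defect $d(\dot y(t), F(y(t)))$ is $O(t)$ because $\eta \in F(x)$ and $F$ is locally Lipschitz; alternatively, and more elementarily, one can take a continuous Michael selection $f$ of $F$ with $f(x) = \eta$ (available since $F$ is Lipschitz with closed convex images) and any Peano solution of $\dot z = f(z)$ through $x$, which automatically has $z(h) = x + h\eta + o(h)$. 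Once such a $\phi_h$ is in hand, your limit computation against the proximal inequality~\eqref{eq.subgradbis} is correct and closes the necessity half. For sufficiency --- the deep half --- you rightly identify the obstacle that lower semicontinuity does not place proximal subgradients along the actual trajectory and so one must perturb via Ekeland's principle and proximal density, locating a nearby $y^\sharp$ still inside $O$ that carries a violating pair. One technical point your sketch leaves implicit is that~\eqref{eqqqbisss1} constrains only the non-horizontal proximal normals to $\epi B$ (those of the form $(\zeta,-\lambda)$ with $\lambda > 0$); the strong-invariance argument must also rule out horizontal normals $(\zeta, 0)$, which is done by a limiting argument exploiting the Lipschitz continuity of $F$. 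With the Filippov citation repaired and these two details filled in, your proposal would amount to a self-contained proof of what the paper outsources.
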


Lemma \ref{lemmonotonicity} is a particular case of \cite[Corollary 4.13]{Sanfelice:monotonicity}.

\begin{corollary} \label{corf}
Suppose the system in \eqref{eq.1} is \blue{backward complete} and $F$ satisfies 
Assumption \ref{item:difinc} and is locally Lipschitz. Consider initial and unsafe sets $(X_o, X_u) \subset \mathbb{R}^n \times \mathbb{R}^n$ such that $X_o$ is closed and $X_o \cap X_u = \emptyset$. System \eqref{eq.1} is safe with respect to $(X_o, X_u)$ if and only if there exists a locally Lipschitz time-varying barrier function candidate $B : \mathbb{R}_{\geq 0} \times \mathbb{R}^n \rightarrow \mathbb{R}$ such that 
\begin{equation}
\label{eqconvLip}
\begin{aligned} 
 \langle \zeta, [1 \quad \eta^\top]^\top \rangle \leq 0 \quad  & \forall \zeta \in \partial_P B (t,x),~ \forall \eta \in F(x), \\ & \forall (t,x) \in \mathbb{R}_{\geq 0} \times \mathbb{R}^n.
\end{aligned}
\end{equation}
\end{corollary}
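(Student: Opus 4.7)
The plan is to deduce this corollary by combining Theorem \ref{thm3} (which gives the equivalence of safety with the existence of a locally Lipschitz time-varying barrier function candidate satisfying the solution-based monotonicity property \ref{item:starstar}) with Lemma \ref{lemmonotonicity} (which translates such monotonicity into an infinitesimal inequality using the proximal subdifferential). In both directions, I will apply Lemma \ref{lemmonotonicity} to the extended autonomous system $(\dot{t},\dot{x}) \in \{1\} \times F(x)$ on $\mathbb{R}_{\geq 0}\times\mathbb{R}^n$, viewing $B$ as a function of the extended state; note that the extended right-hand side inherits local Lipschitzness and Assumption \ref{item:difinc} from $F$, so the hypotheses of Lemma \ref{lemmonotonicity} are satisfied.

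For sufficiency, assume the infinitesimal inequality \eqref{eqconvLip} holds everywhere and let $B$ be locally Lipschitz with \eqref{eq.2a+} and \eqref{eq.2b+}. Setting $K$ as in \eqref{eqadddbis} and $O := U(K)\setminus K$, the inequality holds in particular on $O$, so Lemma \ref{lemmonotonicity} (applied to the extended system) yields that $t\mapsto B(t,\phi(t,x_o))$ is nonincreasing along every solution remaining in $O$, i.e., \ref{item:starstar} is satisfied. Theorem \ref{thm3} (or directly Theorem \ref{thm2pre}) then gives safety with respect to $(X_o,X_u)$.

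For necessity, I would reuse the specific candidate $B$ built in the proof of Theorem \ref{thm3}, namely the marginal function in \eqref{eqbarup}. Inspection of that proof shows a stronger monotonicity than \ref{item:starstar}: the inclusion $R(-t,\phi(t,x_o))\subset R(-t-h,\phi(t+h,x_o))$ holds for every solution $\phi$ starting from any $x_o\in\mathbb{R}^n$, not just those confined to a neighborhood of $K$. Hence $t\mapsto B(t,\phi(t,x_o))$ is nonincreasing along every solution to \eqref{eq.1}. Applying Lemma \ref{lemmonotonicity} to the extended system on the open set $O=\mathbb{R}_{>0}\times\mathbb{R}^n$ then gives
\[
\langle \zeta, [1\ \eta^\top]^\top \rangle \leq 0 \quad \forall \zeta \in \partial_P B(t,x),~ \forall \eta\in F(x),~ \forall (t,x)\in \mathbb{R}_{>0}\times\mathbb{R}^n,
\]
and the boundary case $t=0$ follows by lower semicontinuity of $\partial_P B$ in the appropriate sense, or more simply by noting that the proximal subgradient inequality \eqref{eq.subgradbis} at $(0,x)$ extends to $t=0$ by continuity of $t\mapsto B(t,x)$ guaranteed by Proposition \ref{cor1up}.

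The main obstacle I anticipate is a bookkeeping one rather than a conceptual one: carefully verifying that the proximal subdifferential computed for the extended function $(t,x)\mapsto B(t,x)$ coincides with what appears in \eqref{eqconvLip}, and handling the boundary $t=0$ where $\mathbb{R}_{\geq 0}$ is not open (so Lemma \ref{lemmonotonicity}, stated for open sets $O$, does not apply verbatim). I would resolve the latter by either extending $B$ to negative times via backward reachability (already well-defined under backward completeness) or by passing to the limit $t\to 0^+$ using continuity in $t$ from Proposition \ref{cor1up} together with outer semicontinuity of the proximal subdifferential of a lower semicontinuous function.
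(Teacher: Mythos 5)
Your proposal is correct and follows essentially the same route as the paper: the paper likewise combines Theorem \ref{thm3} with the observation (from the proof of Theorem \ref{thm2}) that the candidate in \eqref{eqbarup} is nonincreasing along \emph{every} solution, and then invokes Lemma \ref{lemmonotonicity} on the extended state $(t,x)$ to pass to the infinitesimal condition \eqref{eqconvLip}. Your treatment of the $t=0$ boundary is in fact slightly more careful than the paper's, which simply takes the open set to be $\mathbb{R}\times\mathbb{R}^n$.
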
 

\begin{proof}
According to Theorem \ref{thm3}, safety with respect to $(X_o, X_u)$, when the set $X_o$ is closed, is equivalent to the existence of 
a locally Lipschitz time-varying barrier function $B : \mathbb{R}_{\geq 0} \times \mathbb{R}^n \rightarrow \mathbb{R}$ 
satisfying \eqref{eq.2a+}, \eqref{eq.2b+}, 
and \ref{item:starstar}. Moreover, according to the proof of Theorem \ref{thm2}, for each solution $\phi$ to \eqref{eq.1} starting from $x_o \in \mathbb{R}^n$, 
the map $t \mapsto B(t,\phi(t,x_o))$ is nonincreasing. This property is equivalent to saying that property \ref{item:star'} in 
Lemma \ref{lemmonotonicity} is satisfied while replacing $(x,O)$ therein by 
$((t,x), (\mathbb{R} \times \mathbb{R}^n))$, which completes the proof since the function 
$B$ is continuous. 
\end{proof} 

\begin{example} [Example \ref{expcount} revisited]
Consider system \eqref{cont.exp} in Example \ref{expcount} with the sets $(X_o,X_u)$ as in \eqref{eqSetsCounterExp}. Since the right-hand side in \eqref{cont.exp} is locally Lipschitz and $X_o$ is closed, we conclude via Theorem \ref{thm3} that the time-varying barrier function $B$ in \eqref{eq.sert.counter} is locally Lipschitz and satisfies \eqref{eqconvLip}.
\end{example}

\subsection{When $F$ is Single Valued and Smooth} \label{sec.3c}

In the context of (non-asymptotic) stability of the origin, Kurzweil in \cite{kurzweil1955} deduced from the Lyapunov function constructed in \cite{Persidskiipap}, which is similar to \eqref{eqbarup}, the existence of a Lyapunov function that is $\mathcal{C}^1$ everywhere (except at the origin) under continuous differentiability of $F$ and using the fact that the origin is an equilibrium point. The compactness of the origin is an important requirement for the proof in \cite{kurzweil1955} to hold. Unfortunately, this assumption does not hold when a generic (not necessarily invariant) set $X_o$ is considered instead of the origin, as $X_o$ might be unbounded. To handle this situation, we extend 
\cite[Lemma 48.3]{hahn1967stability} via 
Lemma \ref{lem2} and Lemma \ref{lem1smooth} below, whose proofs are in the Appendix. 
 
\begin{lemma} \label{lem2}
Consider a continuous function $h : \mathbb{R}_{\geq 0} \times \mathbb{R}^n \rightarrow \mathbb{R}_{\geq 0}$ and a closed set $K \subset \mathbb{R}^n$. Assume that
\begin{enumerate}[label={\roman*)},leftmargin=*]
\item \label{item:S11} The function $h$ is positive definite with respect to $K$ uniformly in $t$; namely, $h(t,x) = 0$ for all $t \geq 0$ if and only if $x \in K$,
\item \label{item:S12} The map $t \mapsto h(t, x)$ is nonincreasing for each $x \in \mathbb{R}^n$. 
\end{enumerate}
Then, for any compact set $\mathcal{I} \subset \mathbb{R}^n$ such that $\mathcal{I} \cap K = \emptyset$, there exists a continuous function $g : \mathbb{R}_{\geq 0} \times \mathcal{I} \rightarrow \mathbb{R}_{\geq 0}$ such that
\begin{enumerate}[label={\arabic*)},leftmargin=*]
\item \label{item:S21} The function $ g \in \mathcal{C}^1( \mathbb{R}_{\geq 0} \times \mbox{int}(\mathcal{I})) $,
\item \label{item:S22} For any $(t,x) \in \mathbb{R}_{\geq 0} \times \mathcal{I}$,
\begin{align} \label{eq.eqnc3}
\frac{1}{2} h(t,x) \leq g(t,x) \leq 2 h(t,x),
\end{align}
\item \label{item:S23} The map $t \mapsto g(t, x)$ is nonincreasing for each $x \in \mathcal{I}$.
\end{enumerate}
\end{lemma}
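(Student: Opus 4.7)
The approach is a standard mollification of $h$, adapted to preserve the temporal monotonicity in \ref{item:S23} and to achieve the multiplicative sandwich bound \ref{item:S22}, in the spirit of the classical Kurzweil--Hahn smoothing construction for Lyapunov functions.

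First, I would extract the structural data needed to calibrate the construction. From the compactness of $\mathcal{I}$, the continuity of $h$, condition \ref{item:S11}, and condition \ref{item:S12}, the function $h$ is bounded on $\mathbb{R}_{\geq 0} \times \mathcal{I}$ by $M := \max_{x \in \mathcal{I}} h(0,x) < \infty$, and is uniformly continuous on each compact set $[0, T] \times (\mathcal{I} + \mathbb{B})$, with some modulus of continuity $\omega_T$. Second, I would construct $g$ as a mollification. Fix smooth non-negative kernels $\psi \in \mathcal{C}^\infty(\mathbb{R})$ with support in $[0,1]$ and $\int \psi = 1$, and $\rho \in \mathcal{C}^\infty(\mathbb{R}^n)$ with support in $\mathbb{B}$ and $\int \rho = 1$. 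For a smooth, non-increasing scale function $\epsilon : \mathbb{R}_{\geq 0} \to (0,1]$ with $|\epsilon'| \leq 1$, to be specified later, define
\[
g(t, x) := \int_0^1 \int_{\mathbb{R}^n} h\bigl( t + \epsilon(t)\tau,\, x + \epsilon(t) z \bigr)\, \psi(\tau)\, \rho(z)\, dz\, d\tau.
\]

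Third, I would verify the three required properties. Continuity of $g$ on $\mathbb{R}_{\geq 0} \times \mathcal{I}$ and $g \in \mathcal{C}^1(\mathbb{R}_{\geq 0} \times \mathrm{int}(\mathcal{I}))$ follow by dominated convergence and differentiation under the integral sign, which is licit thanks to the smoothness of $\epsilon$, $\psi$, and $\rho$, and the continuity and local boundedness of $h$. Monotonicity \ref{item:S23} exploits that $t \mapsto t + \epsilon(t)\tau$ is non-decreasing for each $\tau \in [0,1]$ (this is where $\epsilon$ non-increasing and $|\epsilon'| \leq 1$ are used), combined with the $t$-monotonicity of $h$ in \ref{item:S12}; the $t$-dependence of the spatial kernel is handled by the change of variables $y := x + \epsilon(t) z$ and then checking that the $t$-derivative of the resulting expression has the right sign. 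For the sandwich bound \ref{item:S22}, one chooses $\epsilon$ on each strip $[n, n+1]$ small enough that the modulus of continuity of $h$ on $[0, n+1] \times (\mathcal{I}+\mathbb{B})$ at scale $\epsilon$ is a small fraction of $h(t,x)$ on that strip, and then interpolates smoothly between the strip-wise choices to obtain a single smooth $\epsilon$ yielding $|g(t,x) - h(t,x)| \leq h(t,x)/4$.

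The principal obstacle is reconciling the multiplicative sandwich bound with the temporal monotonicity on the unbounded domain $\mathbb{R}_{\geq 0} \times \mathcal{I}$, along which $h$ may decay toward zero as $t \to \infty$: the mollification scale must shrink fast enough that the relative error $|g - h|/h$ remains controlled, yet slowly enough (specifically, 1-Lipschitz) that $t \mapsto t + \epsilon(t)\tau$ stays non-decreasing so that monotonicity is preserved. Constructing the scale function $\epsilon$ that balances these two competing requirements, and rigorously verifying that the interpolation does not spoil either property in the transition regions between strips, is the technical heart of the argument.
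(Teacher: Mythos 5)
Your proposed route — a single mollification of $h$ with a $t$-dependent scale $\epsilon(t)$ — is genuinely different from the paper's construction (which discretizes time on a grid $\{t_i\}$, approximates each time-slice $h(t_i,\cdot)$ by a smooth function $w_i$ over the compact set $\mathcal{I}$ so that $\{w_i(x)\}$ is a nonincreasing sequence for each $x$, and then interpolates in $t$ by a monotone cubic Hermite polynomial). Unfortunately, the proposed mollification has a genuine gap at the monotonicity step, and the ``principal obstacle'' you flag at the end is exactly where the argument breaks down rather than being a matter of care.

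The condition $|\epsilon'|\le 1$ together with $\epsilon$ nonincreasing does ensure that the time argument $t\mapsto t+\epsilon(t)\tau$ is nondecreasing for each fixed $\tau\in[0,1]$, so the temporal part of the kernel is harmless. The problem is the spatial part. Writing, for $t_1\le t_2$,
\begin{align*}
g(t_1,x)-g(t_2,x)
&= \underbrace{\int\!\!\int \bigl[h(t_1{+}\epsilon_1\tau,\,x{+}\epsilon_2 z)-h(t_2{+}\epsilon_2\tau,\,x{+}\epsilon_2 z)\bigr]\psi\,\rho}_{\ge 0 \text{ by \ref{item:S12} and } 1+\epsilon'\tau\ge 0}
\\
&\qquad + \int\!\!\int \bigl[h(t_1{+}\epsilon_1\tau,\,x{+}\epsilon_1 z)-h(t_1{+}\epsilon_1\tau,\,x{+}\epsilon_2 z)\bigr]\psi\,\rho,
\end{align*}
with $\epsilon_i:=\epsilon(t_i)$, one sees that the second integral compares the average of a fixed time-slice of $h$ over a ball of radius $\epsilon_1$ with the average over a smaller ball of radius $\epsilon_2\le\epsilon_1$. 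This difference has no definite sign: for instance, if $h(s,\cdot)$ has a strict local maximum at $x_0$, shrinking the averaging ball around $x_0$ \emph{raises} the spatial average, which pushes $g$ up in time; when $h$ is nearly constant in $s$ near $(s,x_0)$ (so the first integral is nearly zero), this makes $t\mapsto g(t,x_0)$ increasing there. There is no constraint in the hypotheses relating the spatial modulus of continuity of $h$ to its temporal modulus, so you cannot outrun this by slowing $\epsilon$ down further — any nonconstant $\epsilon$ introduces a spatially-indefinite term, while a constant $\epsilon$ cannot deliver the multiplicative bound \ref{item:S22} once $\min_{\mathcal{I}}h(t,\cdot)$ decays to zero. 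The change of variables $y=x+\epsilon(t)z$ does not help: it moves the $t$-dependence into a rescaled kernel $\epsilon(t)^{-n}\rho\bigl((y-x)/\epsilon(t)\bigr)$, whose $t$-derivative again has no sign. The ``interpolate between strip-wise choices'' idea inherits the same defect: a $t$-dependent convex combination $\lambda(t)g_k+(1-\lambda(t))g_{k+1}$ contributes the indefinite term $\lambda'(g_k-g_{k+1})$, and there is no reason for $g_k\ge g_{k+1}$ pointwise.

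The paper sidesteps all of this by never letting the spatial approximation interact with the time derivative: it fixes a time grid $\{t_i\}$, builds the spatial approximations $w_i\in\mathcal{C}^1(\operatorname{int}\mathcal{I})$ of $h(t_i,\cdot)$ so that $\{w_i(x)\}$ is already nonincreasing in $i$ (absorbing the spatial error into summable slack $\{\zeta_i\}$), and then uses a cubic Hermite interpolant in $t$ that is nonincreasing on each $[t_i,t_{i+1}]$ precisely because its endpoint values $w_i(x)\ge w_{i+1}(x)$ are ordered. Monotonicity is thus enforced structurally, rather than hoped for from a derivative computation. To salvage the mollification idea you would need an additional mechanism — e.g.\ subtracting a slowly-decaying smooth ``slack'' function that dominates the spatial averaging error and is designed to be nonincreasing — which is essentially the $\{\zeta_i\}$ bookkeeping in the paper's proof recast in continuous time. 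As written, the proposal does not establish \ref{item:S23}.
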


\begin{lemma} \label{lem1smooth}
Consider a continuous function $h : \mathbb{R}_{\geq 0} \times \mathbb{R}^n \rightarrow \mathbb{R}_{\geq 0}$ and  consider a closed set $K \subset \mathbb{R}^n$. Assume that \ref{item:S11}-\ref{item:S12} in Lemma \ref{lem2} hold. Then, there exists a continuous function $g : \mathbb{R}_{\geq 0} \times \mathbb{R}^n \rightarrow \mathbb{R}_{\geq 0}$ such that
\begin{enumerate}[label={\arabic*)},leftmargin=*]
\item \label{item:S41} The function $ g \in \mathcal{C}^1( \mathbb{R}_{\geq 0} \times (\mathbb{R}^n\backslash K))$,
\item \label{item:S42} For all $(t,x) \in \mathbb{R}_{\geq 0} \times \mathbb{R}^n$,
\begin{align} \label{eq.eqnc}
\frac{1}{2} h(t,x) \leq g(t,x) \leq 2 h(t,x),
\end{align}
\item \label{item:S43} The map $t \mapsto g(t, x)$ is nonincreasing for each 
$x \in \mathbb{R}^n$.
\end{enumerate}  
\end{lemma}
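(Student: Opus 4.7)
The plan is to apply Lemma \ref{lem2} locally on compact pieces of $\mathbb{R}^n \setminus K$ and paste the resulting $\mathcal{C}^1$ smoothings together using a smooth, $t$-independent partition of unity, then extend by zero onto $K$. First, I would construct a countable, locally finite open cover $\{U_i\}_{i \in \mathbb{N}^*}$ of the open set $\mathbb{R}^n \setminus K$ such that each $\mathcal{I}_i := \mathrm{cl}(U_i)$ is compact and disjoint from $K$; this is possible because $\mathbb{R}^n \setminus K$ is an open subset of $\mathbb{R}^n$, hence paracompact, and can be exhausted by sets of the form $\{x : |x| \leq k,\, d(x,K) \geq 1/k\}$. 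Associated with this cover, a standard smooth partition of unity $\{\alpha_i\}_{i \in \mathbb{N}^*}$ exists with $\alpha_i \in \mathcal{C}^\infty(\mathbb{R}^n, [0,1])$, $\mathrm{supp}(\alpha_i) \subset U_i \subset \mathrm{int}(\mathcal{I}_i)$, and $\sum_i \alpha_i(x) = 1$ for all $x \in \mathbb{R}^n \setminus K$. Applying Lemma \ref{lem2} with $\mathcal{I} = \mathcal{I}_i$ produces continuous functions $g_i : \mathbb{R}_{\geq 0} \times \mathcal{I}_i \to \mathbb{R}_{\geq 0}$, of class $\mathcal{C}^1$ on $\mathbb{R}_{\geq 0} \times \mathrm{int}(\mathcal{I}_i)$, satisfying $\tfrac{1}{2}h(t,x) \leq g_i(t,x) \leq 2 h(t,x)$ and $t \mapsto g_i(t,x)$ nonincreasing for each $x \in \mathcal{I}_i$.

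Next, I would define
\begin{align*}
g(t,x) := \begin{cases} \sum_{i} \alpha_i(x)\, g_i(t,x) & \text{if } x \in \mathbb{R}^n \setminus K, \\ 0 & \text{if } x \in K, \end{cases}
\end{align*}
where the sum is finite on a neighborhood of every $x \notin K$ by local finiteness, so convergence is not an issue. Properties \ref{item:S41}--\ref{item:S43} would then be verified in turn. The $\mathcal{C}^1$ regularity on $\mathbb{R}_{\geq 0} \times (\mathbb{R}^n \setminus K)$ is inherited from the local finiteness of the sum together with the $\mathcal{C}^1$ regularity of each $\alpha_i$ and of each $g_i$ on $\mathbb{R}_{\geq 0} \times \mathrm{int}(\mathcal{I}_i)$, which covers $\mathrm{supp}(\alpha_i)$. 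The two-sided bound \eqref{eq.eqnc} is preserved by convex combinations with weights summing to one on $\mathbb{R}^n \setminus K$, and is trivial on $K$ since $h \equiv 0$ there by hypothesis \ref{item:S11} of Lemma \ref{lem2}. Monotonicity in $t$ is immediate, since the weights $\alpha_i(x)$ do not depend on $t$ while each $g_i(\cdot,x)$ is nonincreasing.

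The one remaining subtlety is global continuity of $g$, which only needs checking at points $(t_0, x_0)$ with $x_0 \in K$. A squeeze argument suffices there: for any $(t,x)$ in a neighborhood of $(t_0, x_0)$, one has $0 \leq g(t,x) \leq 2 h(t,x)$ (whether $x \in K$ or not, using the bound $g_i \leq 2h$ and $\sum_i \alpha_i = 1$ on the complement), and continuity of $h$ together with $h(t_0, x_0) = 0$ forces $g(t,x) \to 0 = g(t_0, x_0)$. I expect the main obstacle to be not conceptual but organizational: one has to exhibit a locally finite cover whose closures genuinely remain disjoint from $K$ while still covering all of $\mathbb{R}^n \setminus K$, and to be careful that the supports of the $\alpha_i$ sit inside the \emph{interiors} of the $\mathcal{I}_i$ (where the $\mathcal{C}^1$ conclusion of Lemma \ref{lem2} holds), not just in the $\mathcal{I}_i$ themselves. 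Everything else then reduces to a routine gluing via partitions of unity.
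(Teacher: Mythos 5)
Your proposal is correct and takes essentially the same route as the paper: apply Lemma \ref{lem2} on compact pieces whose closures stay away from $K$, glue the resulting local $\mathcal{C}^1$ smoothings with a smooth $t$-independent partition of unity, and extend by zero onto $K$ using the squeeze $0 \leq g \leq 2h$ with $h$ vanishing on $K$. The only difference is organizational: the paper structures the cover as annular bands $I_s = \{2^{s-3} \leq |x|_K^2 \leq 2^{s+4}\}$ each further decomposed into compact pieces $D_i^s$, and therefore performs the partition-of-unity averaging in two nested stages, whereas your single locally finite cover with one partition of unity collapses this into one step and is arguably cleaner.
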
 
 
It should be added that the origin being an equilibrium plays an important role in \cite{kurzweil1955} to guarantee positive definiteness of a certain function constructed in the proof. However, such a function is not necessarily positive definite when the origin is replaced by a generic closed set. To handle this situation, we propose a state dependent change in the time scale such that, in the new time scale, this function becomes positive definite.

\begin{theorem} \label{prop12}
Suppose the system in \eqref{eq.1} is \blue{backward complete},  and $F$ is single valued and continuously differentiable. Consider initial and unsafe sets 
$(X_o, X_u) \subset \mathbb{R}^n \times \mathbb{R}^n$ such that $X_o$ is closed and $X_o \cap X_u = \emptyset$.  
System \eqref{eq.1} is safe with respect to $(X_o, X_u)$ if and only if there exists a continuous time-varying barrier function candidate $B:\mathbb{R}_{\geq 0} \times \mathbb{R}^n \rightarrow \mathbb{R}$ of class 
$\mathcal{C}^1\left( (\mathbb{R}_{\geq 0} \times \mathbb{R}^n) \backslash K \right)$, where 
$K := \left\{ (t,x) \in \mathbb{R}_{\geq 0} \times \mathbb{R}^n : B(t,x) \leq 0  \right\}$,  such that

\begin{equation}
\label{eq.2bis1}
\begin{aligned} 
\langle \nabla B(t,x), [1 \quad F(x)^\top]^\top \rangle & \leq 0  \\ & \forall (t,x) \in \left( \mathbb{R}_{\geq 0} \times \mathbb{R}^n \right) \backslash K. 
\end{aligned} 
\end{equation}
\end{theorem}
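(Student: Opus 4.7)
Since $B$ is $\mathcal{C}^1$ on $(\mathbb{R}_{\geq 0}\times\mathbb{R}^n)\backslash K$, condition \eqref{eq.2bis1} implies the sufficient infinitesimal condition \eqref{eq.2c1+} on $U(K)\backslash K$ for any open neighborhood $U(K)$ of $K$. By the characterization recalled after Problem \ref{prob1}, property \ref{item:starstar} holds, and Theorem \ref{thm2pre} yields safety.

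\textbf{Necessity.} The plan is to upgrade the regularity of the locally Lipschitz barrier $B_L$ given by Theorem \ref{thm3} (explicitly \eqref{eqbarup}) to $\mathcal{C}^1$ outside its zero-sublevel set, in the spirit of Kurzweil's converse Lyapunov argument \cite{kurzweil1955}, while overcoming the three obstacles flagged in the excerpt: $X_o$ is unbounded, $X_o$ is not invariant, and solutions of \eqref{eq.1} may be unbounded. Note that $B_L$ is nonincreasing in $t$ along every solution of \eqref{eq.1}, vanishes on $\mathbb{R}_{\geq 0}\times X_o$, and is positive on $\mathbb{R}_{\geq 0}\times X_u$. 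However, $B_L(t,x)$ can decay to zero as $t\to\infty$ for fixed $x\in X_u$, so $B_L$ is not positive definite with respect to $X_o$ uniformly in $t$, and Lemma \ref{lem1smooth} cannot be applied directly.

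To remedy this, I would perform a state-dependent change in the time scale, as flagged in the paragraph preceding the theorem: choose a scalar map $\sigma:\mathbb{R}^n\to \mathbb{R}_{>0}$, continuous on $\mathbb{R}^n$ and of class $\mathcal{C}^1$ on $\mathbb{R}^n\backslash X_o$, and define an auxiliary function of the form $h(t,x) := B_L(\sigma(x)\,t,\,x)$ (or a closely related construction) so that $h$ inherits continuity, nonincrease in $t$, and the sign conditions from $B_L$ and, in addition---through a careful choice of $\sigma$ exploiting the $x$-dependence of the decay rate of $B_L(\cdot,x)$---is positive definite with respect to $X_o$ uniformly in $t$. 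Hypotheses \ref{item:S11}--\ref{item:S12} of Lemma \ref{lem1smooth} (with $K:=X_o$) are then met, and the lemma yields a continuous $g$ of class $\mathcal{C}^1$ on $\mathbb{R}_{\geq 0}\times(\mathbb{R}^n\backslash X_o)$ satisfying $h/2\leq g\leq 2h$ and still nonincreasing in $t$. Undoing the rescaling, set $B(t,x) := g(t/\sigma(x),x)$ on $\mathbb{R}_{\geq 0}\times(\mathbb{R}^n\backslash X_o)$ and $B\equiv 0$ on $\mathbb{R}_{\geq 0}\times X_o$: continuity and the sign conditions \eqref{eq.2a+}--\eqref{eq.2b+} transfer through the two-sided bound; the $\mathcal{C}^1$ regularity off $K:=\{B\leq 0\}$ is inherited from those of $g$ and $\sigma$; and nonincrease of $B$ along every solution of \eqref{eq.1}, combined with the $\mathcal{C}^1$ regularity, yields \eqref{eq.2bis1} pointwise on the complement of $K$.

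\textbf{Main obstacle.} The delicate step is the construction of $\sigma$: it must be tight enough to force uniform positive definiteness of $h$ with respect to $X_o$, yet loose enough to preserve monotonicity of $h$ along every solution of \eqref{eq.1}. This is precisely the difficulty flagged in the paragraph preceding the theorem and is why the paper develops Lemmas \ref{lem2}--\ref{lem1smooth} as extensions of \cite[Lemma 48.3]{hahn1967stability} to unbounded, non-invariant target sets. A secondary bookkeeping matter is to verify that the zero-sublevel set $\{B\leq 0\}$ coincides with $\mathbb{R}_{\geq 0}\times X_o$, so that the $\mathcal{C}^1$ regularity produced by Lemma \ref{lem1smooth} lies exactly where the theorem statement requires.
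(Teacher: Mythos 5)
Your high-level skeleton for necessity—rescale time so that the Lipschitz barrier from Theorem \ref{thm3} becomes positive definite with respect to $X_o$ uniformly in $t$, smooth it via Lemma \ref{lem1smooth}, then undo the rescaling—matches the paper's strategy, and your sufficiency argument is fine in spirit (the paper instead argues directly by integrating $\frac{d}{dt}B(t,\phi(t,x_o))$ over the interval where the trajectory stays outside $K$, which avoids any appeal to a condition stated for globally $\mathcal{C}^1$ functions). However, the specific rescaling you propose for necessity cannot work, and the gap is not a technicality. A static rescaling $h(t,x):=B_L(\sigma(x)t,x)$ does not repair positive definiteness: for $x$ in the gray zone $\mathbb{R}^n\setminus(X_o\cup X_u)$ whose backward trajectory reaches $X_o$ at backward time $T>0$ (such $x$ generically exist when $X_o$ is not backward invariant), $B_L(t,x)=0$ for all $t\geq T$, so $h(t,x)=0$ for all $t\geq T/\sigma(x)$ regardless of how $\sigma$ is chosen. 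The scaling only shifts the time at which $h$ vanishes; it cannot remove the vanishing, and the hypothesis of Lemma \ref{lem1smooth} fails. The paper instead rescales the \emph{dynamics}, replacing $F$ by $FV/(1+V)$ with $V$ a positive-definite (w.r.t.\ $X_o$) function, so that trajectories of the rescaled flow starting outside $X_o$ never reach $X_o$ in finite rescaled time $\tau$; only then is the reachability marginal $h(\tau,x_0)$ positive for all $\tau$ whenever $x_0\notin X_o$. The time change $\tau(t,x_o)=t+\int_0^t \frac{ds}{V(\phi(s,x_o))}$ is trajectory-dependent, not a function of the current state alone.

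There is a second, independent gap in your undoing step. With $B(t,x):=g(t/\sigma(x),x)$, along a solution $\phi(t,x_o)$ \emph{both} arguments of $g$ vary with $t$: the first need not be monotone because $\sigma(\phi(t,x_o))$ changes, and $g$'s dependence on the second argument is uncontrolled, so there is no reason for $t\mapsto B(t,\phi(t,x_o))$ to be nonincreasing and hence no route to \ref{item:starstar} or \eqref{eq.2bis1}. The paper composes with the backward flow, setting $B(t,x):=g\bigl(\tau(t,\chi(-t,x)),\chi(-t,x)\bigr)$; along a solution, $\chi(-t,\phi(t,x_o))=x_o$ is frozen (uniqueness since $F\in\mathcal{C}^1$) and $\tau(t,x_o)$ is increasing, so the monotonicity of $g$ in its first argument transfers directly. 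This pullback is the essential device you are missing. Also, your ``secondary bookkeeping'' concern is a misreading: the theorem does not require $\{B\leq 0\}$ to equal $\mathbb{R}_{\geq 0}\times X_o$, and in the paper's construction $K$ is strictly larger (all $(t,x)$ whose backward trajectory meets $X_o$ within time $t$), which is precisely where $B$ is set to zero and where $\mathcal{C}^1$ regularity is not claimed.
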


\begin{proof}
In order to prove the sufficient part of the statement, we use a contradiction. That is, assume the existence of a solution $\phi$ starting from $x_o \in X_o$ such that $\phi(T, x_o) \in X_u$ for some $T>0$. The latter fact implies, using \eqref{eq.2bis1}, that $B(0,\phi(0,x_o)) \leq 0$ and $B(T, \phi(T,x_o)) > 0$. Furthermore, since $t \mapsto B(t,\phi(t,x_o))$ is continuous, we also conclude the existence of $0 \leq T_1 < T$ such that 
$B(T_1, \phi(T_1,x_o)) = 0$ and $B(t,\phi(t,x_o)) > 0$ for all $t \in (T_1,T]$. Hence, $ B(T, \phi(T,x_o)) - B(T_1, \phi(T_1,x_o)) > 0$ and using the continuity of $t \mapsto B(t,\phi(t,x_o))$, we also conclude the existence of $\epsilon > 0$ sufficiently small such that $T_1 + 2\epsilon < T$ and $ B(T-\epsilon, \phi(T-\epsilon,x_o)) - B(T_1+\epsilon, \phi(T_1+\epsilon,x_o)) > 0$.  However, since $B(t,\phi(t,x_o)) > 0$ for all $t \in (T_1,T]$, it follows that $t \mapsto B(t,\phi(t,x_o)) $ is $\mathcal{C}^1((T_1, T))$. Hence, 
\begin{align*}
& B(T-\epsilon, \phi(T-\epsilon,x_o)) - B(T_1+\epsilon, \phi(T_1+\epsilon,x_o)) = \\  & \int^{T-\epsilon}_{T_1+\epsilon} \frac{\partial B}{\partial t}(t,\phi(t,x_o)) + \frac{\partial B}{\partial x}(t,\phi(t,x_o)) F(\phi(t,x_o)) dt \leq 0,
\end{align*}
which yields to a contradiction. 
\\

To prove the necessary part, we first propose to render the behavior of the system \eqref{eq.1} around the set $X_o$ similar to the behavior of a smooth system around its equilibrium point. More precisely, by proposing a new time scale, we render the set $X_o$ unreachable in finite time by the solutions starting from $\mathbb{R}^n \backslash X_o$. To this end, given an initial condition $x_o \in \mathbb{R}^n \backslash X_o$, we propose the following new time scale
\begin{align} \label{new-timesc}
\tau (t,x_o) := t + \int^{t}_{0} \frac{1}{V(\phi(s,x_o))} ds,
\end{align}  
where $V$ is any locally Lipschitz and positive definite function with respect to the set $X_o$ which is differentiable everywhere outside the set $X_o$ and $\phi$ is the solution to \eqref{eq.1} starting from $x_o$. The function $V$ always exists for any given closed set $X_o \subset \mathbb{R}^n$ and it can be constructed using Lemma \ref{lem1smooth} by considering the distance function with respect to $X_o$ to be the function $h$ therein. 
Furthermore, we let $ \psi(\tau(t,x_o), x_o) := \phi(t,x_o)$. 

As a consequence, the derivative of $y$ with respect to the new time scale $\tau$ satisfies 
\begin{equation}
\label{new-timesys}
\begin{aligned} 
\psi'(\tau,x_o) & :=  \frac{d \psi}{d \tau}(\tau,x_o)  = \frac{d \phi}{d \tau}(t,x_o) \\ & 
= \frac{F(\phi(t,x_o))}{\frac{d\tau}{dt}(t,x_o)} =  \frac{F(\phi(t,x_o)) V(\phi(t,x_o))}
{1 + V(\phi(t,x_o))}.
\end{aligned}
\end{equation}
Hence, 
\begin{align} \label{new-timesys1}
\psi'(\tau,x_o) = \frac{F(\psi(\tau, x_o)) V(\psi(\tau, x_o))}{1 + V(\psi(\tau, x_o))}.
\end{align}
Note that the solutions to the system 
\begin{align} \label{new-timesys1bis}
\psi' = \frac{F(\psi) V(\psi)}{1 + V(\psi)}
\end{align}
starting from $x_o$ cannot reach $X_o$ when starting outside that set $X_o$. Moreover, the set $X_o$ is forward invariant under the system \eqref{new-timesys1bis}.
 
Let us now introduce the continuous function $h: \mathbb{R}_{\geq 0} \times \mathbb{R}^n \rightarrow \mathbb{R}_{\geq 0}$ as 
\begin{align} \label{new-timesys2}
h (\tau , x_o) := \inf \{|y|_{X_o} : y \in R(\tau,x_o) \},
\end{align}
where $R$ in this case is the reachable set along the solutions to the system \eqref{new-timesys1bis}. 
Using Proposition \ref{propup1}, we conclude that the function $h$ is locally Lipschitz. Furthermore,
since the right-hand side of \eqref{new-timesys1bis} is locally Lipschitz, we conclude that $h$ positive definite with respect to the set $X_o$. Finally, the map $\tau \mapsto h(\tau, x_o)$ 
non-increasing for all $x_o \in \mathbb{R}^n$. Therefore, using Lemma \ref{lem1smooth}, we conclude the existence of a continuous function 
$g : \mathbb{R}_{\geq 0} \times \mathbb{R}^n \rightarrow \mathbb{R}_{\geq 0}$ which is $\mathcal{C}^1$ outside the set $X_o$, non-increasing with respect to the first argument, and satisfies
\begin{align*}
\frac{1}{2} h (\tau , x_o) \leq g (\tau , x_o) \leq 2 h (\tau , x_o) \quad \forall 
(\tau, x_o) \in (\mathbb{R}_{\geq 0} \times \mathbb{R}^n). 
\end{align*}
Next, we introduce the barrier candidate 
$B: \mathbb{R}_{\geq 0} \times \mathbb{R}^n \rightarrow \mathbb{R}_{\geq 0}$ as
\begin{align} \label{eq.cert}
B(t,x) := \left\{ 
\begin{matrix}
g(\tau (t,\chi(-t,x)), \chi(-t,x)) & 
\\ & \hspace{-2cm} \mbox{if}~ \chi([-t,0],x) \cap X_o = \emptyset, 
\\
 0 & \mbox{otherwise},
\end{matrix}
\right.
\end{align}
where $\chi$ is the backward solution 
to \eqref{eq.1} starting from $x$. 
Note that when $x \in X_u$, for each $t \geq 0$, we have $\chi([-t,0],x) \cap X_o = \emptyset$, hence, 
\begin{align*}
B(t,x)  = & ~ g(\tau (t,\chi(-t,x)), \chi(-t,x)) \\ \geq & ~  h(\tau (t, \chi(-t,x)), \chi(-t,x))/2 > 0.
\end{align*}
Contrary, when $x \in X_o$, $\chi([-t,0],x) \cap X_o \neq \emptyset$, hence, $B(t,x)=0$. Furthermore, we show that the candidate $B$ is non-increasing along the solutions to \eqref{eq.1} by showing that 
$$ B(t+h, \phi(t+h,x_o)) \leq B(t,\phi(t,x_o)) \quad \forall t \geq 0, \quad \forall h \geq 0, $$ 
and for each solution $\phi$ to \eqref{eq.1} starting from $x_o$. To this end, we distinguish two complementary situations. 
\begin{enumerate}
\item When $\chi([-(t+h),0],\phi(t+h,x_o)) \cap X_o = \emptyset$, it follows that  $B(t+h, \phi(t+h,x_o)) = 0 \leq B(t,\phi(t,x_o))$.  
\item When $\chi([-(t+h),0], \phi(t+h,x_o)) \cap X_o = \emptyset$, it follows that 
\begin{align*} 
B(t+h, &~ \phi(t+h,x_o)) = g(\tau(t+h,x_o), x_o) \\ 
& = g(\tau (t+h, \chi(-t,\phi(t,x_o))), 
\chi(-t,\phi(t,x_o))) \\ & \leq 
g(\tau(t, \chi(-t,\phi(t,x_o))), \chi(-t,\phi(t, x_o))) \\ & = B(t,\phi(t,x_o)).
\end{align*}  
To obtain the latter inequality, we used the fact that the function $g$ is non-increasing with respect to its first argument uniformly in the second. 
\end{enumerate}

In order to complete the proof, it remains to show that $B \in \mathcal{C}^1\left( (\mathbb{R}_{\geq 0} \times \mathbb{R}^n) \backslash K \right)$. Indeed, for 
$(t,x) \in (\mathbb{R}_{\geq 0} \times \mathbb{R}^n) \backslash K$, we have $B(t,x) > 0$. Hence, $\chi([-t,0],x) \cap X_o = \emptyset$ and 
$B(t, x) = g(\tau (t,\chi(-t,x)), \chi(-t,x))$.  Furthermore, since the function $B$ is continuous, we conclude the existence of $U(t,x)$ an open neighborhood around $(t,x)$ such that, for any $(t',x') \in U(t,x)$,  we have $ B(t', x') = g(\tau(t',\chi(-t',x')), \chi(-t',x')) > 0$. 
Next, we note that the map $(\tau, x) \mapsto g(\tau, x)$ is continuously differentiable on $\mathbb{R}_{\geq 0} \times (\mathbb{R}^n \backslash X_o)$. Furthermore, $\chi(-t,x)$ is continuously differentiable with respect to its arguments since $F \in \mathcal{C}^1$, see \cite[Chapter 5]{sideris2013ordinary}. Moreover, the map $(t,x) \mapsto \tau(t,\chi(-t,x))$ is $\mathcal{C}^1$ provided that $\chi(-s,x) \notin X_o$ for all $s \in [0,t]$, which completes the proof. 
\end{proof} 

\begin{example}[Example \ref{expcount} revisited]
Consider system \eqref{cont.exp} in Example \ref{expcount} with the sets $(X_o,X_u)$ as in \eqref{eqSetsCounterExp}. Since the right-hand side in \eqref{cont.exp} is continuously differentiable, we conclude using Theorem \ref{prop12} that system \eqref{cont.exp} admits a continuous time-varying barrier function candidate $B:\mathbb{R}_{\geq 0} \times \mathbb{R}^n \rightarrow \mathbb{R}$ of class $\mathcal{C}^1\left( (\mathbb{R}_{\geq 0} \times \mathbb{R}^n) \backslash K \right)$ satisfying  \eqref{eq.2bis1}. In particular, the function $B$ in \eqref{eq.sert.counter}, given explicitly in \eqref{eq.sert.counter1}, corresponds to such a smooth barrier function.
\end{example}

\section{Connections to Results in the Literature} \label{sec.4-}

\subsection{Connections to Tangent-Cone-Type Conditions} \label{sec.apNagumo}

According to \cite{nagumo1942lage}, given a closed set $K \subset \mathbb{R}^n$, when the solutions to \eqref{eq.1} are unique or when $F$ is locally Lipschitz according to Definition \ref{deflip}, the set $K$ is forward pre-invariant if and only if
\begin{align} \label{eq.Nagumo}
F(x) \subset T_K(x) \qquad \forall x \in \partial K.
\end{align}
Note that \eqref{eq.Nagumo} involves the contingent cone $T_K$ and the map $F$ on the boundary of the closed set $K$. However, in general, as stressed in \cite{Aubin:1991:VT:120830}, invariance depends on the values of $F$ outside $K$ rather than on its boundary. Under mild regularity properties on $F$, the external contingent cone $E_K$ is used in \cite{Aubin:1991:VT:120830}, and \eqref{eq.Nagumo} can be replaced by
\begin{align} \label{eq.ExtNagumo}
F(x) \subset E_K(x) \qquad \forall x \in \mathbb{R}^n \backslash K,
\end{align}
where $E_K$ is the \textit{external contingent} cone of $K$ at $x$ is given by $E_K(x) :=  \left\{ v\in \mathbb{R}^n :\liminf_{h \rightarrow 0^+} \frac{|x+hv|_K - |x|_K}{h} \leq 0  \right\}$. 

Note that conditions \eqref{eq.Nagumo} and \eqref{eq.ExtNagumo} resemble conditions \eqref{eq.2c2} and \eqref{eq.2c1}, respectively. Indeed, in \eqref{eq.Nagumo} and \eqref{eq.ExtNagumo}, we are using the distance function $B(x) := |x|_K$ since \eqref{eqaddd} holds. However, since the distance function to a set is only locally Lipschitz, the gradient-based inequalities in \eqref{eq.2c2} and \eqref{eq.2c1} are replaced by the limits in the definitions of $T_K$ and $E_K$ in \eqref{eq.Nagumo} and \eqref{eq.ExtNagumo}, respectively.

\subsection{Connections to Results using Barrier Functions} \label{sub.sec.conn}

Before comparing our results to the existing literature, we recall the following useful notions.

\begin{itemize}
\item \textit{Uniqueness function \cite{draftautomatica}.}  A function $g : \mathbb{R} \rightarrow \mathbb{R}$ is said to be a uniqueness function if, for each continuous function 
$l: \mathbb{R}_{\geq 0} \rightarrow \mathbb{R}_{\geq 0}$ satisfying  
$l(0) = 0$ and, for some $\epsilon > 0$,
\begin{align*} 
\limsup_{h \rightarrow 0^+} \frac{l(t+h)-l(t)}{h} \leq g(l(t)) ~~
\mbox{for a.a.}~ t \in [0, \epsilon],
\end{align*} 
it follows that $l(t) = 0$ for all $t \in [0,\epsilon]$. 

\item \textit{Minimal functions \cite{konda2019characterizing}.} A function $g : \mathbb{R} \rightarrow \mathbb{R}$ is said to be a minimal function if, for each continuous function 
$l: \mathbb{R}_{\geq 0} \rightarrow \mathbb{R}_{\geq 0}$ satisfying  
$l(0) \leq 0$ and, for some $\epsilon > 0$,
\begin{align*} 
\limsup_{h \rightarrow 0^+} \frac{l(t+h)-l(t)}{h} \leq g(l(t)) ~~
\mbox{for a.a.}~ t \in [0, \epsilon],
\end{align*} 
 it follows that $l(t) \leq 0$ for all $t \in [0,\epsilon]$. 

\item \textit{Extended class-$\mathcal{K}$ functions \cite{ames2014control}.}  A continuous function $g : \mathbb{R} \rightarrow \mathbb{R}$ is said to be an extended class-$\mathcal{K}$ function if $g$ is strictly increasing and $g(0) = 0$.
\end{itemize}

The nonpositive sign required in 
\eqref{eq.2c1+} can be relaxed using uniqueness functions or minimal functions, as shown in \cite{draftautomatica} and \cite{konda2019characterizing}, respectively. More precisely, for a general time-varying barrier function candidate $B \in \mathcal{C}^1$, condition \eqref{eq.2c1+} can be relaxed to
\begin{equation}
 \label{eq.2c3}
\begin{aligned} 
\langle \nabla B (t,x) , [1 \quad  \eta^\top]^\top \rangle & \leq g(B(t,x))  
\\ &  \forall \eta \in F(x), ~  \forall (t,x) \in  U(K) \backslash K,
\end{aligned}
\end{equation}
where $g$ is either a uniqueness or a minimal function. Furthermore, given a time-independent barrier function candidate $B \in \mathcal{C}^1$, according to \cite{DAI201762, 10.1007/978-3-642-39799-8_17,ames2014control}, the following condition implies forward pre-invariance of the set $K$ in \eqref{eqaddd}:
\begin{equation}
 \label{eq.2c2p}
\begin{aligned} 
\langle \nabla B(x), \eta \rangle \leq g(B(x))  \quad  \forall \eta \in F(x), ~  \forall x \in  U(K),
\end{aligned}
\end{equation}
where the function $g$ is either an extended class-$\mathcal{K}$ or a locally Lipschitz function. Moreover, when $F$ is locally Lipschitz and the set $K$ is compact, \eqref{eq.2c2p} is equivalent to forward pre-invariance of the set $K$. This converse result, in addition to restricting the class of systems \eqref{eq.1}, requires the existence of a $\mathcal{C}^1$ barrier function candidate. As we show in the following example, it is possible to find situations where the sets $(X_o, X_u)$ do not admit a $\mathcal{C}^1$ time-independent barrier function candidate.  

\begin{example} \label{exp3}
Let $X_u := \mathbb{R}^2 \backslash X_o$ and 
$ X_o := \{x \in \mathbb{R}^2 : \rho_1(x) \leq 0,~\rho_2(x) \leq 0 \}$, where $\rho_1$ and $\rho_2 : \mathbb{R}^n \rightarrow \mathbb{R}$ are $\mathcal{C}^1$ functions such that, for each $i \in \{1,2\}$, $\nabla \rho_i(x) \neq 0$ for all 
$x \in \mathbb{R}^n$ such that $\rho_i (x) = 0$. Furthermore, suppose there exists $x_o \in \mathbb{R}^2$ such that $\rho_1(x_o) = \rho_2(x_o) = 0$ and the vectors 
$\nabla \rho_1(x_o)$ and $\nabla \rho_2(x_o)$ are linearly independent. For this choice of $(X_o,X_u)$, we show that it is not possible to find a $\mathcal{C}^1$ barrier function candidate. To arrive to a contradiction, we assume the existence of $B : \mathbb{R}^n \rightarrow \mathbb{R}$ such that $X_o := \{x \in \mathbb{R}^2 : B(x) \leq 0 \}$.
Assume without loss of generality that $\nabla B(x_o) \neq 0$. Hence, using \cite[Proposition 4.3.7]{aubin2009set}, we conclude that $T_{X_o}(x_o) = \{v \in \mathbb{R}^2 : \langle \nabla B(x_o), v \rangle \leq 0 \}$.
Moreover, from the construction of $X_o$ using $\rho_1$ and $\rho_2$, we conclude that
$$ T_{X_o}(x_o) = \{v \in \mathbb{R}^2 : \langle \nabla \rho_1(x_o), v \rangle \leq 0, ~ \langle \nabla \rho_2(x_o), v \rangle \leq 0 \}. $$
Now, let $y \in \mathbb{R}^2 \backslash \{0\}$ be such that 
$\langle \nabla B(x_o), y \rangle  =  0$; hence, 
$\lambda y \in T_{X_o}(x_o)$ for all $\lambda \in \mathbb{R}$. The latter implies that 
$\langle \nabla \rho_1(x_o), y \rangle = 0$ and 
$\langle \nabla \rho_2(x_o), y \rangle = 0$, 
which contradicts the fact that the vectors $\nabla \rho_1(x_o)$ and $\nabla \rho_2(x_o)$ are linearly independent.
\end{example}

When $g$ is an extended class-$\mathcal{K}$ function, the condition in \eqref{eq.2c2p} is a particular case of \eqref{eq.2c1}, and thus a particular case of \eqref{eq.2c1+}. Furthermore, when $g$ is locally Lipschitz, \eqref{eq.2c2p} becomes a particular case of \eqref{eq.2c3}. Indeed, every locally Lipschitz function is a uniqueness function and a minimal function at the same time. However, \textit{Osgood} functions are examples of uniqueness and minimal functions that are not locally Lipschitz \cite{redheffer1972theorems, agarwal1993uniqueness}. Finally, imposing the inequality in \eqref{eq.2c2p} to hold on $U(K)$ instead of only on $U(K) \backslash K$ is not necessary to guarantee safety; however, it becomes useful when using numerical methods for the (online) design of smooth controllers that enforce safety \cite{jankovic2018robust}. 

\begin{remark}
When the barrier function candidate $B$ is locally Lipschitz, condition \eqref{eq.2c1lip} can also be relaxed using uniqueness functions and minimal functions. Furthermore, when $B$ is time independent, condition \eqref{eq.2c1lip} reduces to the condition used in \cite[Theorem 2]{glotfelter2017nonsmooth} 
and \cite{8625554}.
\end{remark}

\subsection{Connections to Existing Converse Safety Results}
Via the following simple example, we illustrate the limitation of the converse safety results in \cite{prajna2005necessity}, \cite{wisniewski2016converse}, and \cite{ratschan2018converse}. 

\begin{example} \label{exp}
Consider the system  
\begin{align} \label{eqexplast} 
\dot{x} = 
\begin{bmatrix} 
-1 & -10 \\ 1 & 0  
\end{bmatrix} x \qquad x \in \mathbb{R}^2,
\end{align} 
and let the initial and unsafe sets be given by
\begin{align*} 
X_o := \left\{ x \in \mathbb{R}^2 : x_1^2 + x_2^2 \leq 1 \right\}, ~ X_u := \left\{ x \in \mathbb{R}^2 : x_2 \geq 2  \right\}. 
\end{align*}
Note that the set $X_o$ is not forward pre-invariant but the system \eqref{eqexplast} is safe with respect to $(X_o,X_u)$. One way to show this fact consists in verifying \eqref{eq.2c} using the barrier function candidate
$B(x) := x_1^2/10 + x_2^2 - 1$. Note that system \eqref{eqexplast} admits the origin as an equilibrium point, which is a trivial limit cycle. Hence, it is not possible to apply the converse safety result in \cite{prajna2005necessity}. Furthermore, according to the robust safety notion introduced in \cite{wisniewski2016converse}, which is included below, the system \eqref{eqexplast} is robustly safe with respect to $(X_o,X_u)$.  However, the system \eqref{eqexplast} is not defined on a bounded manifold. Hence, it is not possible to use the converse result in \cite{wisniewski2016converse}.  

\begin{definition} [Robust safety \cite{wisniewski2016converse}] \label{defrobsaf}
System \eqref{eq.1} is said to be robustly safe with respect to $(X_o,X_u)$ if there exists $V_o := U(X_o)$ and $V_u := U(X_u) $ such that the vector field $F$ separates $V_o$ from $V_u$.
In turn, a vector field $F$ is said to separate a set $V_o$ from a set $V_u$ if $F$ does not join $V_o$ to $V_u$. In turn, a vector field $F$ is said to join a set $V_o$ to a set $V_u$ if one of the following is true.
\begin{enumerate}
\item  There exists a solution to \eqref{eq.1} starting from $V_o$ that reaches $V_u$. 
\item There is not a succession of singular elements (singular points and limit cycles) $\{\beta_1, \beta_2, ..., \beta_N\}$, $N \in \mathbb{N}$, such that the following properties hold simultaneously:
\begin{itemize}
\item A forward solution to \eqref{eq.1}, starting from $V_o$, converges to $\beta_1$. 
\item A backward solution to \eqref{eq.1}, starting from $V_u$, converges to $\beta_N$.
\item  A broken solution joins $\beta_1$ to $\beta_N$; namely, for each $i \in \{1,2,...,N \}$, there is $x_{oi} \in \mathbb{R}^n$ such that the forward solution to \eqref{eq.1} starting from $x_{oi}$ converges to $\beta_{i+1}$ and the backward solution to \eqref{eq.1} starting from $x_{oi}$ converges to 
$\beta_{i}$; see \cite{wisniewski2016converse} for more details.
\end{itemize}
\end{enumerate}
\end{definition}
Finally, note that the system 
\begin{align} \label{eqexplast1}  
\dot{x} \in
\begin{bmatrix} 
-1 & -10 \\ 1 & 0  
\end{bmatrix} x  + \epsilon \mathbb{B} \qquad 
x \in \mathbb{R}^2
\end{align}
is input-to-state stable (ISS) with respect to $\epsilon$ and the sets $X_o$ and $X_u$ are closed and disjoint. Hence, for $\epsilon>0$ sufficiently 
small, system \eqref{eqexplast1} is safe with respect to $(X_o,X_u)$. However, the complement of the set $X_{u}$ is unbounded. As a result, we cannot use the converse result in \cite{ratschan2018converse}. 
\end{example}

\subsection{Connections to Results on Conditional Invariance} \label{appen1}

According to \cite[Theorem 2]{ladde1974}, the set $X_s$ is conditionally invariant with respect to $X_o$ if there exists a continuously differentiable function $V : \mathbb{R}^n \rightarrow \mathbb{R}$ such that the following three conditions hold:
\begin{enumerate}[label={\roman*)},leftmargin=*]
\item \label{item:S1} For each $x \in \mathbb{R}^n \backslash X_o$ and for each $y_x \in X_o$ satisfying $y_x := \text{arg inf} \{V(x-z) : z \in X_o \}$, we have 
$\langle \nabla V(x-y_x), \eta \rangle \leq 0$ for all $\eta \in F(y_x)$.
\item \label{item:S2} There exists $a \in \mathbb{R}$ such that the function $B: \mathbb{R}^n \rightarrow \mathbb{R}$ given by $B(x) := \inf \{V(x-z) - a : z \in X_o \}$ satisfies  
\begin{align} \label{eqadded} 
B(x) > 0 ~ \forall x \in \partial X_s, ~ 
B(x) \leq 0 ~ \forall x \in \partial X_o. 
\end{align} 
\item \label{item:S3} For each $(x,y) \in (\mathbb{R}^n \backslash X_o) \times X_o$,
\begin{align} \label{eq.3}
\langle \nabla V(x-y) , \eta_x - \eta_y \rangle  \leq 
g(V(x-y) - a)
\end{align}
for all $(\eta_x,\eta_y) \in F(x) \times F(y)$, where the scalar function $g$ is a minimal function (see Section \ref{sub.sec.conn}).
\end{enumerate}
The proof of this result is based on showing that, along the solutions to \eqref{eq.1}, the function $B$ cannot become positive when starting from nonpositive values. Indeed, using \ref{item:S1} and \ref{item:S3}, we can prove that 
\begin{equation}
\label{eqnew} 
\begin{aligned} 
\langle \zeta, \eta \rangle \leq g(B(x)) ~~~\forall \zeta \in  \partial_C B (x),~ \forall \eta \in F(x), ~  \forall x \in \mathbb{R}^n \backslash X_o.
\end{aligned}
\end{equation}
Note that \ref{item:S2} along with \eqref{eqnew} guarantee forward pre-invariance of the set $K$ in \eqref{eqaddd}; however, condition \eqref{eq.2} is not necessarily satisfied in this case. Furthermore, \ref{item:S2} and \eqref{eqnew} imply that $\mbox{int}(X_s)$ is conditionally invariant with respect to $X_o$, when $X_o \subset \mbox{int}(X_s)$. 

Next, we present a result that generalizes \cite[Theorem 2]{ladde1974}. In our result, we distinguish \textit{strict conditional invariance}, where the solutions starting from $X_o$ remain in the interior of $X_s$, from \textit{conditional invariance}, where the solutions starting from $X_o$ remain in $X_s$. To match the setting in \cite{ladde1974}, it is written for a time-independent barrier function candidate.

\begin{theorem} \label{prop1}
Consider the system in \eqref{eq.1} such that $F$ satisfies Assumption \ref{item:difinc}. Let $(X_o,X_s) \subset \mathbb{R}^{n} \times \mathbb{R}^n$ with $X_o \subset X_s$, $g : \mathbb{R} \rightarrow \mathbb{R}$ be a minimal 
function, and $B : \mathbb{R}^n \rightarrow \mathbb{R}$ be locally Lipschitz.
\begin{enumerate}
\item The set $X_s$ is conditionally invariant with respect to $X_o$ if 
\begin{align} \label{eqaddedbis}
\hspace{-0.6cm} B(x) > 0 ~ \forall x \in U(X_s) \backslash X_s, \quad  
B(x) \leq 0 ~ \forall x \in \partial X_o, 
\end{align}
\begin{equation}
\label{eq.12a} 
\begin{aligned} 
\langle \zeta, \eta \rangle \leq  g(B(x)) ~~~ & \forall \zeta \in \partial_C B(x), ~ \forall \eta \in F(x), \\ & \forall x \in U(X_s) \backslash X_o. 
\end{aligned}
\end{equation}

\item The set $X_s$ is strictly conditionally invariant with respect to $X_o \subset \mbox{int}(X_s)$ if \eqref{eqadded} holds and
\begin{equation}
\label{eq.13a} 
\begin{aligned}
\hspace{-0.6cm} 
\langle \zeta, \eta \rangle \leq g(B(x)) ~~ & \forall \zeta \in \partial_C B(x), ~ \forall \eta \in F(x), ~ \forall x \in X_s \backslash X_o.  
\end{aligned}
\end{equation}
\end{enumerate} 
\end{theorem}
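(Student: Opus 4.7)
The plan is to argue both items by contradiction, transferring the infinitesimal Clarke-gradient condition into a differential inequality along a solution and then invoking the defining property of minimal functions. Throughout, the idea is that since $B$ is locally Lipschitz and any solution $\phi$ is absolutely continuous, the composition $t\mapsto B(\phi(t,x_o))$ is locally Lipschitz (hence differentiable almost everywhere), and the Clarke chain rule gives
\begin{align*}
\tfrac{d}{dt}B(\phi(t,x_o))\leq\max\{\langle\zeta,\dot\phi(t,x_o)\rangle:\zeta\in\partial_C B(\phi(t,x_o))\}
\end{align*}
for almost all $t$. Combined with $\dot\phi(t,x_o)\in F(\phi(t,x_o))$ and the bound \eqref{eq.12a} (resp. \eqref{eq.13a}), this yields a scalar differential inequality of the form $\dot l(t)\leq g(l(t))$ on the relevant time intervals.

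For item 1, I would suppose for contradiction the existence of a solution $\phi$ from $x_o\in X_o$ and a time $T>0$ with $\phi(T,x_o)\notin X_s$. By continuity, one can pick $T'\leq T$ with $\phi(T',x_o)\in U(X_s)\setminus X_s$. Setting $t_1:=\sup\{t\in[0,T']:\phi(t,x_o)\in X_o\}$, continuity of $\phi$ and the topological definition of $\partial X_o$ force $\phi(t_1,x_o)\in\partial X_o$ and $\phi(t,x_o)\in U(X_s)\setminus X_o$ for all $t\in(t_1,T']$. By \eqref{eqaddedbis}, $B(\phi(t_1,x_o))\leq 0$. Defining $l(\tau):=B(\phi(\tau+t_1,x_o))$ on $[0,T'-t_1]$, the chain rule and \eqref{eq.12a} give $\limsup_{h\downarrow 0}(l(\tau+h)-l(\tau))/h\leq g(l(\tau))$ for a.a.\ $\tau$, with $l(0)\leq 0$. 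Since $g$ is a minimal function, one would normally apply the minimal-function property to $l^+:=\max\{l,0\}$, or directly to a continuous $\tilde l\geq 0$ majorizing $l$, to conclude $l(\tau)\leq 0$ throughout $[0,T'-t_1]$. This contradicts $l(T'-t_1)=B(\phi(T',x_o))>0$ coming from \eqref{eqaddedbis} and $\phi(T',x_o)\in U(X_s)\setminus X_s$.

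Item 2 follows the same template, but starting from the assumption that $\phi$ reaches $\partial X_s$: taking $T$ to be the first exit time from $\mathrm{int}(X_s)$, continuity and $X_o\subset\mathrm{int}(X_s)$ place $\phi(T,x_o)\in\partial X_s$, and one again identifies $t_1:=\sup\{t\in[0,T]:\phi(t,x_o)\in X_o\}$ with $\phi(t_1,x_o)\in\partial X_o$. On $(t_1,T]$ the trajectory lies in $X_s\setminus X_o$, so \eqref{eq.13a} and the chain rule yield the same differential inequality for $l$, with $l(0)\leq 0$ by \eqref{eqadded}. The minimal-function property then forces $l(T-t_1)\leq 0$, which contradicts $B(\phi(T,x_o))>0$ on $\partial X_s$ from \eqref{eqadded}.

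The main obstacle is the bookkeeping around item 1: one has to justify that the definition of $t_1$ produces a crossing point in $\partial X_o$ without a priori assuming $X_o$ closed, and that the entire segment $\phi((t_1,T'],x_o)$ lies in $U(X_s)\setminus X_o$ so that \eqref{eq.12a} is applicable. Once this topological argument is in place, the analytical core---the Clarke chain rule producing $\dot l\leq g(l)$ a.e., followed by the minimal-function comparison---is standard, and the proof of item 2 is essentially a direct transcription with $X_s$ replaced by $\mathrm{int}(X_s)$ and \eqref{eq.12a} by \eqref{eq.13a}.
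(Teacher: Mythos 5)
Your proposal is correct and follows essentially the same route as the paper: a contradiction argument that uses the Clarke chain rule to turn \eqref{eq.12a} (resp. \eqref{eq.13a}) into the scalar differential inequality $\dot l \leq g(l)$ along the offending trajectory, and then invokes the minimal-function property to force $l\leq 0$, contradicting the sign conditions \eqref{eqaddedbis} (resp. \eqref{eqadded}). The only difference is presentational: the paper compresses your $t_1$-bookkeeping by directly positing a solution starting from $\partial X_o$ with $\phi((0,T],x_o)\subset U(X_s)\backslash X_o$, whereas you construct that crossing time explicitly.
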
 

\begin{proof}
To reach a contradiction and establish item 1 (respectively, item 2), we assume that \eqref{eqaddedbis} (respectively, \eqref{eqadded}) holds and $X_s$ is not conditionally invariant (respectively, not strictly conditionally invariant) with respect to $X_o$. That is, there exists a solution $\phi$ starting from $x_o \in \partial X_o$ --- thus, $B(x_o) \leq 0$ --- and there exists $T>0$ such that $\phi(T, x_o) \in U(X_s) \backslash X_s$ (respectively, $\phi(T, x_o) \in \partial X_s$); thus, $\phi(T,x_o) > 0$, and $\phi((0,T], x_o) \subset U(X_s) \backslash X_o$ (respectively, $\phi((0,T], x_o) \subset X_s \backslash X_o$). Hence, according to \cite[Page 7]{sanfelice2007invariance} and \cite{clarke1990optimization}, we conclude that, for almost all $t \in [0,T]$, 
\begin{align*} 
\dot{B}(\phi(t,x_o)) & \leq \sup \{ \langle \zeta, \dot{\phi}(t,x_o) \rangle : \zeta \in \partial_C B(\phi(t,x_o))  \} 
\\ & \leq g(B(\phi(t,x_o))), 
\end{align*} 
with $B(x_o) \leq 0$ and $B(\phi(T,x_o)) > 0$, which yields a contradiction since $g$ is a minimal function, implying that $B(\phi(T,x_o))$ has to be nonpositive.
\end{proof} 

Theorem \ref{prop1} relaxes condition \eqref{eq.2} while assuming that the inequality in \eqref{eq.2c3} holds in a relatively larger set.

\section{Conclusion and Future Work} \label{sec.4}

In this paper, we propose sufficient and necessary conditions for safety in differential inclusions. Guided by the lack of existence of autonomous and continuous barrier functions certifying safety, time-varying barrier functions 
are proposed, and their existence is shown to be both necessary as well as sufficient. The regularity of the proposed time-varying barrier functions depends on the regularity of the right-hand side of the system. 

Future work pertains to solve Problem \ref{prob1} for constrained systems of the form 
\begin{align} \label{eq.1bis}
\dot{x} \in F(x) \qquad x \in C \subset \mathbb{R}^n
\end{align}
or, more generally, hybrid systems as in \cite{goebel2012hybrid}. 
Although the sufficient conditions for safety in constrained and hybrid systems are studied in \cite{draftautomatica},  the converse problem is still not fully answered in the literature.  
Indeed,  the converse safety results in \cite{wisniewski2016converse} and  \cite{prajna2005necessity} consider only particular cases of constrained systems, where the sets $C$, $X_o$, and $X_u$ are assumed to be compact, and $F$ is assumed to be at least continuously differentiable.  Furthermore,  in \cite{wisniewski2016converse},  the system is assumed to admit a \textit{Meyer} function and in \cite{prajna2005necessity} \eqref{eq.2cd} is assumed to hold which, as shown in Example \ref{exp}, are rather restrictive conditions to impose.  It is important to note that Theorem \ref{thm2} can already be extended to constrained and hybrid systems; see \cite{CP5-SIMUL4-ACC2019}.  However,  to establish the existence of a barrier function that is continuous or smooth, the problem becomes more challenging due to the presence of the constraint.   In particular,   the regularity properties of the reachability map $R$, that allows to conclude Lipschitz continuity and continuous differentiability of the marginal functions in \eqref{eqbarup} and \eqref{new-timesys2}, 
are not necessary satisfied in the constrained case; see \cite{ACC2020-1}.

\section{Appendix}

\subsection{Auxiliary Results}

We start this section by introducing the reachability map $R^b : \mathbb{R}_{\geq 0} \times \mathbb{R}^n \rightrightarrows \mathbb{R}^n$, along the solutions to \eqref{eq.1}, given by 
\begin{align}
R^b(t, x) := & \left\{ \phi(s,x) : \phi \in \mathcal{S}(x),~s \in \dom \phi \cap [0,t], \right. \nonumber \\ & ~~~ \left. \not \exists s' \in [0,t] \cap \dom \phi~\mbox{s.t.}~ s' > s  \right\}. \label{eq.Rb}
\end{align}
In words, the set $R^b(t,x)$ includes only the last element reached by each maximal solution to \eqref{eq.1} starting from $x$ over the interval $[0,t]$.

The following lemma can be found in \cite[Theorem 1]{aubin2012differential}.
\begin{lemma} \label{lemAub} 
Suppose that the system in \eqref{eq.1} is forward complete and that $F$ satisfies Assumption \ref{item:difinc}. Then, the following hold for each $t \geq 0$:
\begin{enumerate}
\item The map $x \mapsto R^b(t,x)$ is outer semicontinuous and locally bounded.
\item The map $x \mapsto \mathcal{A}(t,x)$ is outer semicontinuous and locally bounded, where   $\mathcal{A} : \mathbb{R}_{\geq 0} \times \mathbb{R}^n \rightrightarrows \mathcal{S}(\mathbb{R}^n)$ is given by 
\begin{align}
\mathcal{A}(t,x) := & \{ \phi : \phi \in \mathcal{S}(x), ~  
\dom \phi = [0,t] \}. \label{eq.A}
\end{align}
\end{enumerate}
\end{lemma}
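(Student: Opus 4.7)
The plan is to reduce everything to a standard compactness argument for the set of absolutely continuous curves solving \eqref{eq.1}, and then deduce the property for $R^b(t,\cdot)$ by post-composing with the continuous evaluation map at time $t$. The key observation is that under Assumption \ref{item:difinc}, the map $F$ is upper semicontinuous with compact convex images, and (by Remark \ref{remsetval}) it is locally bounded. These are exactly the ingredients needed to invoke the classical closure/compactness theorem for solution sets of differential inclusions.

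First, I would establish local boundedness. Fix $x \in \mathbb{R}^n$ and $t \geq 0$. Using a continuation/Gronwall argument together with local boundedness of $F$, I would produce $r>0$ and $M>0$ such that every solution $\phi$ starting from $x' \in x + r\mathbb{B}$ satisfies $|\phi(s,x')| \leq M$ for all $s \in [0,t]$. (One proceeds by contradiction: if there were a sequence of starting points converging to $x$ whose solutions blow up on $[0,t]$, one could first uniformly bound $F$ on a neighborhood of any compact ball and then iteratively enlarge the region of guaranteed boundedness.) This immediately gives local boundedness of $R^b(t,\cdot)$, and, when solutions are viewed as curves in $\mathcal{C}([0,t],\mathbb{R}^n)$, of $\mathcal{A}(t,\cdot)$ as well, since $|\dot\phi| \leq \sup_{|y|\leq M}\sup_{v\in F(y)}|v| < \infty$ yields equicontinuity.

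Next, I would show outer semicontinuity of $x \mapsto \mathcal{A}(t,x)$. Take sequences $x_i \to x$ and $\phi_i \in \mathcal{A}(t,x_i)$ with $\phi_i \to \phi$ uniformly on $[0,t]$. By the local bound on derivatives, $\{\dot\phi_i\}$ is bounded in $L^\infty([0,t],\mathbb{R}^n)$, hence (passing to a subsequence) weakly convergent in $L^1$ to some $v$; writing $\phi_i(s) = x_i + \int_0^s \dot\phi_i(\tau)\,d\tau$ and passing to the limit, one obtains $\phi(s) = x + \int_0^s v(\tau)\,d\tau$, so $\phi$ is absolutely continuous with $\dot\phi = v$ a.e. The convexity and upper semicontinuity of $F$, together with Mazur's theorem applied to convex combinations of $\{\dot\phi_j\}_{j \geq i}$, then yield $v(\tau)\in F(\phi(\tau))$ a.e., so $\phi \in \mathcal{A}(t,x)$. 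Because every sequence in $\mathcal{A}(t,x_i)$ (with $x_i \to x$) is relatively compact in $\mathcal{C}([0,t],\mathbb{R}^n)$ by Ascoli--Arzelà, the outer semicontinuity characterization in Definition \ref{deflusc} is satisfied.

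Finally, since every $\phi \in \mathcal{S}(x)$ is forward complete and its restriction to $[0,t]$ belongs to $\mathcal{A}(t,x)$, one has $R^b(t,x) = \{\phi(t) : \phi \in \mathcal{A}(t,x)\}$, i.e. $R^b(t,\cdot)$ is the image of $\mathcal{A}(t,\cdot)$ under the evaluation map $\phi \mapsto \phi(t)$, which is continuous on $\mathcal{C}([0,t],\mathbb{R}^n)$. Outer semicontinuity and local boundedness of $R^b(t,\cdot)$ then follow from the corresponding properties of $\mathcal{A}(t,\cdot)$ by a straightforward sequential argument: given $y_i \to y$ with $y_i = \phi_i(t)$, $\phi_i \in \mathcal{A}(t,x_i)$, extract a uniformly convergent subsequence $\phi_{i_k} \to \phi \in \mathcal{A}(t,x)$ to conclude $y = \phi(t) \in R^b(t,x)$. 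The main obstacle in this plan is the passage to the limit in the differential inclusion (the convergence theorem invoking Mazur's theorem); once that is in hand, the remaining steps are routine applications of Ascoli--Arzelà and continuity of evaluation.
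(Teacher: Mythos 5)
Your proposal is correct in substance, but note that the paper does not actually prove Lemma~\ref{lemAub}: it is stated as a citation of \cite[Theorem 1]{aubin2012differential}, so there is no in-paper argument to compare against. What you have written is essentially a reconstruction of the proof of that cited result, namely the classical Convergence Theorem of Aubin--Cellina: uniform bounds plus Ascoli--Arzel\`a give relative compactness of the solution set in $\mathcal{C}([0,t],\mathbb{R}^n)$, weak $L^1$ compactness of the derivatives plus Mazur's theorem plus upper semicontinuity and convexity of $F$ give closure of the solution set, and $R^b(t,\cdot)$ inherits everything through the (continuous) evaluation map since forward completeness makes $R^b(t,x)=\{\phi(t):\phi\in\mathcal{A}(t,x)\}$. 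That reduction of item 1 to item 2 is exactly the right move, and the Mazur step is the genuinely nontrivial ingredient; your sketch of it is accurate.

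The one place your argument is thinner than it should be is the uniform bound $|\phi(s,x')|\leq M$ for all $x'\in x+r\mathbb{B}$ and $s\in[0,t]$. Gronwall is not available here, since $F$ is only locally bounded (no global linear growth), so a solution could in principle escape to infinity in finite time; forward completeness rules this out for each individual maximal solution, but the uniformity over nearby initial conditions is not automatic. The standard repair is itself a compactness argument: if $\phi_i$ start from $x_i\to x$ with $\max_{[0,t]}|\phi_i|\to\infty$, truncate each $\phi_i$ at the first exit time from the ball of radius $k$, extract (via Ascoli--Arzel\`a, a diagonal argument, and the same Mazur-based closure step) a limit solution from $x$ that blows up at some $\sigma\leq t$, contradicting forward completeness. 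So the local-boundedness step logically depends on the closure machinery you develop afterwards; you should either reorder the proof (prove the Convergence Theorem on truncated domains first) or at least acknowledge that the ``iteratively enlarge the region'' sketch is secretly invoking it. With that ordering fixed, the proof is complete.
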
  

The following lemma recalls the well-known Filippov Theorem that can be found in \cite[Theorem 5.3.1]{Aubin:1991:VT:120830}. 

\begin{lemma} [Filippov Theorem]  \label{lemFil}
Consider the system in \eqref{eq.1} and suppose that $F$ is locally Lipschitz on a compact set $K \subset \mathbb{R}^n$; namely, there exists $\lambda > 0$ such that $ F(y) \subset F(x) + \lambda |x-y| \mathbb{B}$  for all $(x,y) \in K \times K$.  Assume further that $F(x)$ is closed for all $x \in \mathbb{R}^n$. 
Then, for any $(x,y) \in K \times K$ and $t > 0$ such that $\left( R(t,x), R(t, y) \right) \subset K \times K$, each solution $\phi$ to \eqref{eq.1} starting from $x$ satisfies
$| \phi(s,x) |_{R^b(s, y)} \leq \exp (\lambda s) |x - y|$ for all $s \in [0,t]$, where the map $R^b$ is introduced in \eqref{eq.Rb}.
\end{lemma}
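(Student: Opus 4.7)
The plan is to construct, for the given solution $\phi$ starting from $x$, a companion solution $\psi$ of \eqref{eq.1} starting from $y$ whose trajectory tracks $\phi$ with the prescribed exponential error. Once such $\psi$ is available, forward completeness of $\psi$ on $[0,s]$ (guaranteed by the hypothesis $R(t,y)\subset K$ and compactness of $K$) will place $\psi(s)$ inside $R^b(s,y)$, and the distance inequality $|\phi(s,x)|_{R^b(s,y)} \le |\phi(s,x)-\psi(s)|$ will deliver the claim.

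The companion solution will be built by Filippov-style successive approximation. Set $\psi_0(s) := y$ on $[0,t]$. Assuming $\psi_k \in C([0,t];K)$ has been constructed, I would select a measurable velocity $v_{k+1}: [0,t] \to \mathbb{R}^n$ satisfying
\begin{equation*}
v_{k+1}(s) \in F(\psi_k(s)), \qquad |v_{k+1}(s) - \dot{\phi}(s)| = d(\dot{\phi}(s), F(\psi_k(s))) \quad \text{for a.a. } s\in [0,t],
\end{equation*}
and define $\psi_{k+1}(s) := y + \int_0^s v_{k+1}(r)\,dr$. Existence of such a measurable selection follows from the Kuratowski--Ryll-Nardzewski theorem applied to the closed-valued, measurable map $s \mapsto F(\psi_k(s)) \cap \{v : |v-\dot\phi(s)| \le d(\dot\phi(s),F(\psi_k(s)))+1/j\}$ and a standard diagonal argument, as in \cite{Aubin:1991:VT:120830}. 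Because $\dot\phi(r) \in F(\phi(r))$ and $F$ is $\lambda$-Lipschitz on $K$ in the Hausdorff sense, the defining property of $v_{k+1}$ gives the pointwise bound $|v_{k+1}(r) - \dot\phi(r)| \le \lambda |\psi_k(r) - \phi(r)|$ whenever $\psi_k(r),\phi(r) \in K$.

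Propagating this inequality produces
\begin{equation*}
|\psi_{k+1}(s) - \phi(s)| \le |y-x| + \lambda \int_0^s |\psi_k(r)-\phi(r)|\,dr,
\end{equation*}
and iterating yields the Picard-type estimates $|\psi_{k+1}(s) - \psi_k(s)| \le |x-y|(\lambda s)^{k+1}/(k+1)!$. Hence $\{\psi_k\}$ is Cauchy in $C([0,t];\mathbb{R}^n)$ and converges uniformly to some absolutely continuous $\psi$ with $\psi(0)=y$, while a single Gronwall step on the nested inequality gives the target bound $|\psi(s)-\phi(s)| \le e^{\lambda s}|x-y|$ for all $s \in [0,t]$. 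To verify that $\psi$ is itself a solution of \eqref{eq.1}, I would extract a weakly-$L^1$ convergent subsequence of $\{v_{k+1}\}$ (the sequence is equi-integrable since $F$ is bounded on $K$), apply Mazur's lemma to get strong convergence of convex combinations, and use outer semicontinuity together with convexity and closedness of $F$-images to conclude $\dot\psi(s) \in F(\psi(s))$ almost everywhere.

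The main obstacles I anticipate are threefold. First, the measurability of each velocity selection $v_{k+1}$ requires care: one must justify that the distance function $s \mapsto d(\dot\phi(s), F(\psi_k(s)))$ is measurable and that the ``almost minimizing'' multifunction has a measurable selector. Second, and more delicate, the Lipschitz estimate only holds on $K$, so one must guarantee inductively that each iterate $\psi_k$ remains in $K$ along $[0,s]$; the cleanest route is to work on the subinterval where both $\phi$ and $\psi_k$ lie in $K$ and use the a priori bound $|\psi(s)-\phi(s)| \le e^{\lambda s}|x-y|$ combined with the hypothesis $R(t,y)\subset K$ to close the induction, perhaps after enlarging $K$ slightly and shrinking back. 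Third, identifying the limit of the velocity selections with a selection of $F(\psi(\cdot))$ is the technical heart of the argument and is where the closed-image assumption and the upper semicontinuity inherited from Assumption~\ref{item:difinc} are essential.
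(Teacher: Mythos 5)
The paper does not prove this lemma at all: it is stated as a recollection of a known result and dispatched with a citation to \cite[Theorem 5.3.1]{Aubin:1991:VT:120830}. What you have written is essentially the classical successive-approximation proof of that cited theorem (select a measurable near-closest velocity in $F(\psi_k(\cdot))$, obtain Picard-type estimates from the Hausdorff--Lipschitz bound, pass to the limit), together with the small bridging step the lemma actually needs, namely that the limiting companion solution $\psi$ extends to a maximal solution from $y$ whose value at time $s$ lies in $R^b(s,y)$, so that $|\phi(s,x)|_{R^b(s,y)} \leq |\phi(s,x)-\psi(s)|$. That overall architecture is correct, and you rightly identify the two standard technical burdens (measurable selection; keeping the iterates inside the set where the Lipschitz bound is available). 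One remark: the Mazur-lemma route for identifying the limit velocity needs convexity of the images, which the lemma statement itself does not assume (only closedness); the cleaner classical argument shows $\{v_k\}$ is Cauchy in $L^1$ and uses $d(v_k(r),F(\psi(r)))\leq \lambda|\psi_{k-1}(r)-\psi(r)|\to 0$ plus closedness, with no convexity required.

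There is, however, one concrete step that fails as written: the initialization $\psi_0\equiv y$. With that choice the first iteration gives $|v_1(r)-\dot\phi(r)|\leq \lambda\, d\bigl(\dot\phi(r),F(\psi_0(r))\bigr)/\lambda \leq \lambda|y-\phi(r)|$, and $|y-\phi(r)|$ is not controlled by $|x-y|$; it grows like $|x-y|+Mr$ where $M$ bounds $|F|$ on $K$. The resulting estimate for $|\psi_1(s)-\phi(s)|$ then carries an additive term of order $\lambda M s^2$ that propagates through the iteration, so neither your claimed bound $|\psi_{k+1}(s)-\psi_k(s)|\leq |x-y|(\lambda s)^{k+1}/(k+1)!$ nor the final inequality $|\psi(s)-\phi(s)|\leq e^{\lambda s}|x-y|$ follows from the construction as stated. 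The standard fix is to initialize with the translated arc $\psi_0(s):=\phi(s)+(y-x)$, for which $|\psi_0(s)-\phi(s)|\equiv|x-y|$ and $d(\dot\psi_0(s),F(\psi_0(s)))\leq \lambda|x-y|$ for almost every $s$; with that base case the factorial estimates and the Gronwall summation deliver exactly $e^{\lambda s}|x-y|$. With this correction (and a rigorous treatment of keeping the translated iterates in a compact set on which $F$ is Lipschitz and bounded, which you flag but do not close), your argument reproduces the proof of the result the paper cites.
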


\subsection{Proof of Proposition \ref{propout}}

To prove the first item using Lemma \ref{lemAub}, we start showing outer semicontinuity of $R^b$ in \eqref{eq.Rb}. Let $(t_o,x_o) \in \mathbb{R}_{\geq 0} \times \mathbb{R}^n$ and let two sequences $\left\{ (t_{oi},x_{oi}) \right\}^{\infty}_{i=0}$ and $\left\{ y_i \right\}^{\infty}_{i=0}$ be such that $\lim_{i \rightarrow \infty} (t_{oi}, x_{oi}) = (t_o,x_o)$, $y_i \in R^b(t_{oi},x_{oi})$, and $\lim_{i \rightarrow \infty} y_i = y \in \mathbb{R}^n$. Outer semicontinuity of $R^b$ at $(t_o,x_o)$ follows if we show that $y \in R^b(t_o,x_o)$. To this end, we introduce  
$\underline{t} := \min\{t_o,\inf \{t_{oi} : i \in \mathbb{N}\}\}$ and $\bar{t} := \max \{t_o , \sup \{t_{oi} : i \in \mathbb{N} \}\}$. Furthermore, consider a sequence of solutions $\left\{\phi_i \right\}^{\infty}_{i=0}$ to \eqref{eq.1} such that each solution $\phi_i$ starts from $x_{oi}$, $\dom \phi_i = [0,t_{oi} - \underline{t}]$, and $y_i \in R^b(\underline{t}, z_i)$, where $z_i := \phi_i(t_{oi} - \underline{t},x_{oi})$. Now, since the sequence $\left\{(t_{oi},x_{oi}) \right\}^{\infty}_{i=0}$ is uniformly bounded, the solutions to \eqref{eq.1} are forward complete, and since $x \mapsto \mathcal{A}(\bar{t},x)$ is locally bounded, we conclude that the sequence $\{\phi_i\}^{\infty}_{i=0}$ is uniformly bounded. Hence, by passing to an adequate subsequence, we conclude the existence of a function 
$\phi : \dom \phi \rightarrow \mathbb{R}^n$ such that $\lim_{i \rightarrow \infty} \phi_i = \phi$; hence, 
$\phi(0,x_o) = x_o$ and $\dom \phi = [0,t_o-\underline{t}]$. 
The function $\phi$ is a solution to \eqref{eq.1} since the map 
$x \mapsto \mathcal{A}(\bar{t},x)$ is outer semicontinuous via item 2) of Lemma \ref{lemAub}. Furthermore, we note that $\lim_{i \rightarrow \infty} z_i = 
\lim_{i \rightarrow \infty} \phi_i(t_{oi} - \underline{t},x_{oi}) = \phi(t_o - \underline{t},x_o) =: z$. Finally, using the first item in Lemma \ref{lemAub}, we conclude that $y \in R^b (\underline{t},z) \subset R^b (t_o,x_o)$; thus, $R^b$ is outer semicontinuous. 

Now, to show outer semicontinuity of $R$, we consider two sequences $\left\{(t_{oi},x_{oi})\right\}^{\infty}_{i=0}$ and 
$\left\{ y_i \right\}^{\infty}_{i=0}$ such that 
$\lim_{i \rightarrow \infty} (t_{oi}, x_{oi}) = (t_o,x_o)$, 
$y_i \in R(t_{oi},x_{oi})$, and 
$\lim_{i \rightarrow \infty} y_i = y \in \mathbb{R}^n$. 
Outer semicontinuity of $(t,x) \mapsto R(t,x)$ at $(t_o,x_o)$ follows if we show that $y \in R(t_o,x_o)$. Having $y_i \in R(t_{oi},x_{oi})$, for each $i \in \mathbb{N}$, implies the existence of $t'_i \in [0,t_{oi}]$ such that $y_i \in R^b(t'_{i},x_{oi})$, for each $i \in \mathbb{N}$. By passing to an adequate subsequence, we conclude the existence of $t' \in [0,t_o]$ such that $t' = \lim_{i \rightarrow \infty} t'_i$. Hence, since $R^b$ is outer semicontinuous, we conclude that 
$y \in R^b(t',x_o) \subset R(t_o,x_o)$. 

Next, we show that $R^b$ is locally bounded using contradiction. That is, assume the existence of a sequence 
$\{(t_{oi},x_{oi})\}^{\infty}_{i=0}$ such that 
$\lim_{i \rightarrow \infty} (t_{oi},x_{oi}) = (t_o,x_o)$ and
\begin{align} \label{eqcontradict} 
\forall \epsilon > 0, \exists i_\epsilon \in \mathbb{N} 
~\mbox{s.t.}~ R^b(t_{oi},x_{oi}) \not\subset \epsilon \mathbb{B} ~~ \forall i \geq i_\epsilon. 
\end{align}
Note that $R^b(t_{oi},x_{oi}) \subset R^b (\bar{t}, z_i)$, where $z_i := \phi_i(t_{oi} - \bar{t},x_{oi})$ and $\phi_i$ is a backward solution to \eqref{eq.1} starting from $x_{oi}$ with 
$\dom \phi_i = [t_{oi} - \bar{t}, 0]$. Since the backward solutions to \eqref{eq.1} are the forward solutions to \eqref{eq.1back}, the solutions to \eqref{eq.1back} are forward complete, and since $x \mapsto \mathcal{A}(\bar{t},x)$ for the system \eqref{eq.1back} is locally bounded, we conclude that the sequence $\{\phi_i\}^{\infty}_{i=0}$ is uniformly bounded. Hence, by passing to an adequate subsequence, we conclude the existence of a function $\phi : \dom \phi \rightarrow \mathbb{R}^n$ such that $\lim_{i \rightarrow \infty} \phi_i = \phi$; hence, 
$\phi(0,x_o) = x_o$ and $\dom \phi = [t_o-\bar{t},0]$. 
The function $\phi$ is a backward solution to \eqref{eq.1} since the map $x \mapsto \mathcal{A}(t',x)$ for \eqref{eq.1back} is outer semicontinuous. Furthermore, we note that $ \lim_{i \rightarrow \infty} z_i = \lim_{i \rightarrow \infty} \phi_i(t_{oi} - t',x_{oi}) = \phi(t_o - t',x_o) =: z$. The later contradicts \eqref{eqcontradict} since, using Lemma \ref{lemAub}, $x \mapsto R^b(\bar{t},x)$ is locally bounded.

Now, we show that $R$ is locally bounded via contradiction. Assume the existence of a sequence 
$\{(t_{oi},x_{oi})\}^{\infty}_{i=0}$ such that $\lim_{i \rightarrow \infty} (t_{oi},x_{oi}) = (t_o,x_o)$ and,  
\begin{align} \label{eqcontradict1} 
\forall \epsilon > 0, \exists i_\epsilon \in \mathbb{N} 
~\mbox{s.t.}~ R(t_{oi},x_{oi}) \not\subset \epsilon \mathbb{B} \qquad \forall i \geq i_\epsilon. 
\end{align}
This implies the existence of $t'_i \in [0,t_{oi}]$, for all $i \in \mathbb{N}$, such that
\begin{align} \label{eqcontradict2} 
\forall \epsilon > 0, \exists i_\epsilon \in \mathbb{N} 
~\mbox{s.t.}~ R^b(t'_i,x_{oi}) \not\subset \epsilon \mathbb{B} \qquad \forall i \geq i_\epsilon. 
\end{align}
By passing to an adequate subsequence, we conclude the existence of $t' \in [0,t_o]$ such that $t' = \lim_{i \rightarrow \infty} t'_i$. Having $R^b$ locally bounded  contradicts \eqref{eqcontradict2} and, thus, $R$ is locally bounded.  

To prove the second item in Proposition \ref{propout}, given $x \in \mathbb{R}^n$, we establish continuity of the set-valued map $t \mapsto R(t,x)$. Since $t \mapsto R(t,x)$ is outer semicontinuous and locally bounded, it is enough to show that it is lower semicontinuous. We show lower semicontinuity of $t \mapsto R^b(t,x)$ in \eqref{eq.Rb} via contradiction. Assume that there exist $\epsilon>0$, $t_o \geq 0$, $y \in R^b(t_o,x)$, and a sequence $\{t_{oi} \}^{\infty}_{i=0}$ such that $\lim_{i \rightarrow \infty} t_{oi} = t_o$ and, at the same time, 
\begin{align} \label{eqcon}
|y - z| \geq \epsilon \qquad \forall z \in R^b(t_{oi},x), \quad \forall i \in \mathbb{N}.
\end{align}
Consider a maximal solution $\phi$ to \eqref{eq.1} starting from $x$ such that $\phi(t_o,x) = y$ and let $y_i := \phi(t_{oi},x)$. Note that $y_i \in R^b(t_{oi},x)$.  Since the solution $\phi$ is continuous, it follows that $\lim_{i \rightarrow \infty} |y- y_i| =0$, which contradicts \eqref{eqcon}. Now, to show lower semicontinuity of $t \mapsto R(t,x)$, we assume that there exist $\epsilon>0$, $t_o \geq 0$, $y \in R(t_o,x)$, and a sequence $\{t_{oi} \}^{\infty}_{i=0}$ such that $\lim_{i \rightarrow \infty} t_{oi} = t_o$ and, at the same time, 
\begin{align} \label{eqcon1}
|y - z| \geq \epsilon \qquad \forall z \in R(t_{oi},x), \quad \forall i \in \mathbb{N}.
\end{align}
Note that \eqref{eqcon1} implies the existence of $t' \in [0,t_o]$ such that $y \in R^b(t',x)$. Moreover, for each sequence 
$\{t'_i\}^{\infty}_{i=0}$ such that  $t'_i \in [0,t_{oi}]$ and
$\lim_{i \rightarrow \infty} t'_i = t'$, we have 
\begin{align} \label{eqcon2}
|y - z| \geq \epsilon \qquad \forall z \in R^b(t'_{i},x), \quad \forall i \in \mathbb{N}.
\end{align}
However, \eqref{eqcon2} contradicts lower semicontinuity of the map $t \mapsto R^b(t,x)$. 
\hfill $\blacksquare$

\subsection{Proof of Proposition \ref{proplip.1}}

To show that the set-valued map $R$ is locally Lipschitz, we will first show that the map $R^b$ in \eqref{eq.Rb} is locally Lipschitz.  To that end, we consider $(t_o, x_o) \in \mathbb{R}_{\geq 0} \times \mathbb{R}^n$ and the set
\begin{align} \label{eq.pr1} 
U_r(t_o,x_o) := & \left\{ (t, x) \in \mathbb{R}_{\geq 0} \times \mathbb{R}^n : \right. \nonumber \\ & \left. t \in [0, t_o + r], 
~ |x-x_o| \leq r \right\}, 
\end{align}
for some $r>0$. Furthermore, let $\lambda_K > 0$ be the Lipschitz constant of $F$ on the set $K := R(U_r(t_o, x_o))$. 
Note that $K$ is compact since the system is forward complete. 
Next, we show the existence of $\epsilon>0$ such that, for any 
$\left( (t_1,x_1), (t_2, x_2) \right) \in U_r(t_o,x_o) \times U_r(t_o,x_o)$, for any 
$y_1 \in R^b([0,t_1],x_1)$ there exists $y_2 \in R^b(t_2, x_2)$ such that 
\begin{align} \label{eq.pr2}
|y_1 - y_2| \leq & \epsilon \left( |x_1 - x_2| + |t_1 - t_2| \right).
\end{align}
 The latter inequality is enough to conclude that $R^b$ is locally Lipschitz. Let $\left( (t_1,x_1), (t_2, x_2) \right) \in U_r(t_o,x_o) \times U_r(t_o,x_o)$, assume without loss of generality that $t_2 \geq t_1$, and note that both $R(t_2, x_2)$ and $R(t_1, x_1)$ belong to the compact set $K$. Hence, using Lemma \ref{lemFil}, we conclude that $| y_1 |_{R^b(t_1, x_2)} \leq \exp^{\lambda_K (t_o + r)} |x_2 - x_1|$. Thus, for  $y'_2 := \argmin \left\{ y_1 - y : y \in R^b(t_1, x_2) \right\}$, we have $|y_1 - y'_2| \leq \exp^{\lambda_K (t_o + r)} |x_2 - x_1|$. Furthermore, for any $y_2 \in R^b(t_2 - t_1, y'_2)$ and since $F$ is locally bounded, we conclude that
\begin{align*} 
M_r(t_o,x_o) & := \max \{|F(\phi(\tau, y))|: \phi \in \mathcal{S}(y), \\ &  y \in R(U_r(t_o, x_o)),~ \tau \in [0, t_o + r] \} < \infty,
\end{align*} 
where $\mathcal{S}(y)$ is the set of maximal solutions to system \eqref{eq.1} starting from $y$. Hence,
$|y_2 - y'_2| \leq M_r(t_o,x_o) |t_1 - t_2|$
and $|y_1 - y_2| \leq | y_1 - y'_2 | + | y'_2 - y_2 |  
\leq \epsilon  \left( |x_2 - x_1| + |t_1 - t_2| \right)$,
where 
\begin{align} \label{eq.esp} 
\epsilon := & \max \left\{ \exp^{\lambda_K (t_o + r)}, M_r(t_o,x_o)  \right\}.
\end{align} 

Now, to show that the set-valued map $R$ is locally Lipschitz, we consider $(t_o, x_o) \in \mathbb{R}_{\geq 0} \times \mathbb{R}^n$ and the compact neighborhood 
$U_r(t_o,x_o)$ introduced in \eqref{eq.pr1}. We will show the existence of $\epsilon > 0$ such that for any two elements $\left( (t_1,x_1), (t_2, x_2) \right) \in U_r(t_o,x_o) \times U_r(t_o, x_o)$, for any $y_1 \in R(t_1, x_1)$ we can find $y_2 \in R(t_2, x_2)$ such that \eqref{eq.pr2} holds. Indeed, consider $t_{y_1} \in [0,t_1]$ such that $ R^b(t_{y_1}, x_1) = y_1$ and 
$(t_{y_1}, x_1) \in U_r(t_o,x_o)$. Hence, 
there exists $y_2 \in R^b(t_{y_2}, x_2)$ with $t_{y_2}$ the closest element to $t_{y_1}$ while being in $[0,t_2]$. Note that $(t_{y_2}, x_2) \in U_r(t_o,x_o)$ and since $R^b$ is locally Lipschitz, we have 
$|y_1 - y_2| \leq  \epsilon \left( |t_{y_1}-t_{y_2}| + 
|x_1-x_2| \right) \leq  \epsilon \left( |t_{1}-t_{2}| + 
|x_1-x_2| \right)$,
where $\epsilon$ is introduced in \eqref{eq.esp}. 
\hfill $\blacksquare$

\subsection{Proof of Lemma \ref{lem3}}

We prove item 1 by directly showing that $B$ satisfies the definition of lower semicontinuity for scalar functions. That is, for every  sequence $\left\{ z_i \right\}_{i=0}^{\infty} \subset \mathbb{R}^m$ such that 
$\lim_{i \rightarrow \infty} z_i = z_o$, we show that 
$\liminf_{i \rightarrow \infty} f(z_i) = 
\liminf_{i \rightarrow \infty} \min_{y \in \Pi(z_i)} |y|_X \geq \min_{y \in \Pi(z_o)} |y|_X = f(z_o)$ 
provided that the set-valued map $\Pi$ is outer semicontinuous in which case, since 
$\Pi$ is already locally bounded, $\inf$ in $f$ becomes $\min$. Since the map $\Pi$ is outer semicontinuous, we conclude that, for all $y_i \in \Pi(z_i)$ such that $\lim_{i \rightarrow \infty} y_i = y_o \in \mathbb{R}^n$, we have $y_o \in \Pi(z_o)$.  Choose $\{y_i\}^{\infty}_{i=0}$ to be such that $y_i \in \Pi(z_i)$ and $|y_i|_X = \min_{y \in \Pi(z_i)} |y|_X$ for each $i \in \mathbb{N}$. Hence, 
$\liminf_{i \rightarrow \infty} f(z_i) = \liminf_{i \rightarrow \infty}  \min_{y \in \Pi(z_i)} |y|_X = \liminf_{i \rightarrow \infty} |y_i|_X$. 
Since the distance function to $X$ is continuous, we conclude that 
$\liminf_{i \rightarrow \infty} f(z_i) = \liminf_{i \rightarrow \infty}  \min_{y \in \Pi(z_i)} |y|_X = 
|\liminf_{i \rightarrow \infty} y_i|_X$. Since $\Pi$ is locally bounded, the sequence $\left\{y_i\right\}^{\infty}_{i=0}$ is bounded; hence, $\liminf_{i \rightarrow \infty} y_i = y_o \in \mathbb{R}^n$. Moreover, by passing to a suitable sub-sequence $ \left\{y_{i_k}\right\}^{\infty}_{k=0} $, we conclude that $ \liminf_{i \rightarrow \infty} y_i = \lim_{k \rightarrow \infty} y_{i_k}= y_o $. Thus, since $\Pi$ is outer semicontinuous, it follows that $y_o \in \Pi(z_o)$. Finally, $\liminf_{i \rightarrow \infty} f(z_i) = |y_o|_X \geq \min_{y \in \Pi(z_o)} |y|_X = f(z_o) $.
We prove item 2 by directly using the definition of upper semicontinuity for scalar functions. That is, we show that, for every sequence $\left\{ z_i \right\}_{i=0}^{\infty} \subset \mathbb{R}^m$ such that 
$\lim_{i \rightarrow \infty} z_i = z_o$, 
we have $\limsup_{i \rightarrow \infty} f(z_i) = 
\limsup_{i \rightarrow \infty} \min_{y \in \Pi(z_i)} |y|_X \leq  \min_{y \in \Pi(z_o)} |y|_X = f(z_o)$
provided that the set-valued map $\Pi$ is lower semicontinuous. To reach a contradiction, we assume the existence of a sequence 
$\left\{ z_i \right\}^{\infty}_{i=0}$ such that 
$\lim_{i \rightarrow \infty} z_i = z_o$ and
$\lim_{i \rightarrow \infty} f(z_i) > f(z_o)$.
The latter implies the existence of $\epsilon > 0$ and $i_o \in \mathbb{N}$ such that, for all $i \geq i_o$, 
\begin{align} \label{eqlem1} 
 f(z_i) - f(z_o) =  \inf_{y \in \Pi(z_i)} |y|_X - \inf_{y \in \Pi(z_o)} |y|_X > \epsilon. 
 \end{align}
Let $z_o := \mbox{arg inf}_{y \in \Pi(z_o)} |y|_{X}$, 
and 
\begin{align}\label{eqlem3} 
w_i := \mbox{arg inf}_{y \in \Pi(z_i)} |y|_{X} ~~~~ \forall i \in \mathbb{N}. 
\end{align}
Using \eqref{eqlem1}, we conclude that $|w_i|_X - |w_o|_X > \epsilon$. On the other hand, since the set-valued map $\Pi$ is lower semicontinuous, it follows that there exists $i_1 \in \mathbb{N}$ such that, for all $i \geq i_1$, there exists  $w'_i \in \Pi(z_i)$ such that $|w'_i - w_o| \leq \epsilon/2$. Using \eqref{eqlem3}, we conclude that, for all $i \geq \max \left\{ i_o, i_1 \right\}$, $|z'_i|_X \geq |z_i|_X $ and $|z'_i|_X - |z_o|_X  \geq |z_i|_X - |z_o|_X > \epsilon$. Finally, since the distance function with respect to the set 
$X$ is globally Lipschitz, we obtain, for all 
$i \geq \max \left\{ i_o, i_1 \right\}$, $\epsilon/2 \geq |w'_i - w_o| \geq |w'_i|_X - |w_o|_X > \epsilon$,
which yields to a contradiction.

To prove the third item, we consider two elements $(z,y) \in \mathbb{R}^n \times \mathbb{R}^n$ and the corresponding two elements $(z',y') \in X \times X$ such that 
\begin{align} 
|z'|_{\Pi(z)}  = & \inf_{w \in X} |w|_{\Pi(z)}  = 
\inf_{w \in \Pi(z)} |w|_{X} = f(z), \label{eqdista} \\ 
|y'|_{\Pi(y)}  =  & \inf_{w \in X} |w|_{\Pi(y)}  = \inf_{w \in \Pi(y)} |w|_{X} = f(y). \label{eqdistb} 
\end{align}
Using the triangular inequality, we conclude that
$|y'|_{\Pi(z)} \leq |y'|_{\Pi(y)} + d_{H}(\Pi(z), \Pi(y))$ and 
$|z'|_{\Pi(y)} \leq |z'|_{\Pi(z)} + d_H(\Pi(z), \Pi(y))$, where $d_H(\Pi(z), \Pi(y))$ is the Hausdorff distance between the two sets $R(z)$ and $R(y)$ introduced in \eqref{eq.Haus}. Furthermore, using the first equality in \eqref{eqdista} and \eqref{eqdistb}, respectively, we conclude that    
 \begin{align} 
|z'|_{\Pi(z)} \leq |y'|_{\Pi(z)}  \leq  &  |y'|_{\Pi(y)} + d_{H}(\Pi(z), \Pi(y)), \label{eqdist2a} \\ 
|y'|_{\Pi(y)} \leq |z'|_{\Pi(y)}  \leq  & |z'|_{\Pi(z)} + d_H(\Pi(z),\Pi(y)).  \label{eqdist2b}
\end{align}
Hence, using \eqref{eqdist2a}-\eqref{eqdist2b} and the second equality in \eqref{eqdista} and \eqref{eqdistb}, respectively, we obtain $|f(z) - f(y)| \leq  |\Pi(z) - \Pi(y)|$. Finally, when the map $\Pi$ is locally Lipschitz, using Definition \ref{deflip}, we conclude the existence of $\lambda > 0$ such that $|f(z) - f(y)| \leq  |\Pi(z) - \Pi(y)| \leq \lambda |z -y|$. 
\hfill $\blacksquare$

\subsection{Proof of Lemma \ref{lem2}}

Given a compact set $\mathcal{I} \subset \mathbb{R}^n$ such that 
$\mathcal{I} \cap K = \emptyset$ and the continuous function $h$,
we introduce the sequence $\left\{ \eta_k \right\}^{\infty}_{k=1}$ given by
\begin{align} \label{eq.seq}
\eta_k := \min \{ h(t,x) : x \in \mathcal{I}, ~ t \in [0, k] \}.
\end{align}
This sequence is strictly positive and nonincreasing. 

Next, we propose to partition the set 
$\mathbb{R}_{\geq 0}$ 
using an increasing sequence 
$\{t_i\}^{\infty}_{i = 0} \subset \mathbb{R}_{\geq 0}$ 
that we design as follows:

\begin{enumerate}
\item For each interval $T_k := [k-1, k]$, $k \in \mathbb{N}^*$, we associate $u_k \in \mathbb{N}^*$. Furthermore, we introduce the sequence $\{ j_k \}^{\infty}_{k=1}$ such that $j_1 := 0$ and $j_{k+1} := j_k + u_k$. 

\item The subsequence $\left\{ t_i \right\}^{u_1}_{i= 0}$ satisfies $t_0 := 0$ and $t_{i+1} := t_i + \frac{1}{u_{1}}$ for all $i \in \{0,1,...,  u_1-1 \}$. It follows that $t_{u_1} = 1$.

\item For each $k \geq 2$, the subsequence 
$\left\{ t_i \right\}^{ j_k + u_k}_{i= j_k}$ satisfies $t_{j_k} = k-1$ and $t_{i+1} := t_i + \frac{1}{u_{k}}$ for all $i \in \{j_k,j_k + 1,...,  j_k+u_k-1 \}$. It follows that $t_{j_k + u_k} = t_{j_{k+1}} = k$.

\item Under the continuity of $h$, we choose the parameter $u_{k}$ such that, for each $i \in \{j_k,j_k + 1,..., j_k + u_k - 1 \}$ and for each $x \in \mathcal{I}$,
$ h(t_i,x) - h(t_{i+1},x) = h(t_i,x) - h(t_{i} + (1/u_{k}), x) < \frac{1}{4} \eta_{k}$.
\end{enumerate}

Now, we consider a nonincreasing sequence $\left\{ \zeta_i \right\}^{\infty}_{i=0} \subset \mathbb{R}_{>0}$ such that
\begin{align} \label{eq.seq3}
 \sum^{\infty}_{i = j_k} \zeta_{i} < \frac{1}{8} \eta_k.
\end{align}
Furthermore, using the continuity of $h$, we conclude the existence of a sequence of functions $\left\{w_i\right\}^{\infty}_{i=0}$
such that: Each $w_i : \mathbb{R}^n \rightarrow \mathbb{R}_{>0}$is continuously differentiable on $\mbox{int}(\mathcal{I})$. For each $x \in \mathcal{I}$, the sequence $\left\{w_i(x)\right\}^{\infty}_{i=0}$ is nonincreasing.  For each $ i \in \mathbb{N}$,
\begin{align} \label{eq.seq4}
| h(t_i, x) - w_i(x) | < \frac{1}{2} \zeta_i  + \sum^{\infty}_{l = i} \zeta_{l}.
\end{align}
Finally, we construct the function $t \mapsto g(t,x)$ by interpolating the sequence of functions $\{w_i(x)\}^{\infty}_{i=0}$ by means of a nonincreasing third order polynomial to obtain, for any $t \in [t_i, t_{i+1}]$ and $i \in \mathbb{N}$,  $g(t,x) := q ( t, t_i, t_{i+1}, w_i(x), w_{i+1}(x))$,  where 
\begin{align*} 
 q(t, t_i, t_{i+1}, w_i(x), w_{i+1}(x)) & := w_i(x) + \\ &  (w_{i+1}(x) - w_i(x))  \frac{3 (t-t_i)^2}{(t_{i+1} - t_i)^2} - \\ &  (w_{i+1}(x) - w_i(x))  \frac{2 (t-t_i)^3}{(t_{i+1} - t_i)^3}.
\end{align*}
Note that $q$ is nonincreasing on $[t_i,t_{i+1}]$ and
\begin{align*} 
q ( t_i, t_i, t_{i+1}, w_i(x), w_{i+1}(x) ) & = w_i(x), 
\\
 q ( t_{i+1}, t_i, t_{i+1}, w_i(x), w_{i+1}(x) ) & = w_{i+1}(x),   
 \\
 \dot{q} ( t_i, t_i, t_{i+1}, w_i(x), w_{i+1}(x) ) & = 0, 
 \\
 \dot{q} ( t_{i+1}, t_i, t_{i+1}, w_i(x), w_{i+1}(x) ) & = 0. 
 \end{align*}
In order to complete the proof, it remains to show that \eqref{eq.eqnc3} is satisfied for all $(t,x) \in \mathbb{R}_{\geq 0} \times \mathcal{I}$. Without loss of generality, consider $x \in \mathcal{I}$ and $t \in [k-1, k)$, for $k \in \{1,2,...,\infty \}$. Assume that 
$t \in [t_i, t_{i+1}]$ for some $t_i \in [k-1, k)$. Hence, 
$i \in \left( j_k, j_k + u_k \right)$. It follows that
$ g(t,x) - h(t,x)  \leq  g(t_i,x) - h(t_{i+1},x) 
\leq  | g(t_i,x) - h(t_{i},x) | + h(t_{i},x) - h(t_{i+1},x) 
\leq  \sum^{\infty}_{j=i} \zeta_j + \frac{1}{2} \zeta_i + \frac{1}{4} \eta_{k} < \frac{1}{2} \eta_{k}$,
where we used the fact that $ g(t_i,x) = w_i(x) $, \eqref{eq.seq4}, and \eqref{eq.seq3}. Similarly,
\begin{align*}
 h(t,x) - & g(t,x) \leq h(t_{i+1},x) - g(t_i,x) \\ &
\leq  | g(t_{i+1},x) - h(t_{i+1},x) | + h(t_{i},x) - h(t_{i+1},x) 
\\ &
\leq  \sum^{\infty}_{j=i+1} \zeta_j + \frac{1}{2} \zeta_i + \frac{1}{4} \eta_{k} < \frac{1}{2} \eta_{k}.
\end{align*}
Therefore, $|h(t,x) - g(t,x)| \leq  \frac{1}{2} \eta_{k}$ and $h(t,x) - \frac{1}{2} \eta_{k} \leq  g(t,x) \leq h(t,x) + \frac{1}{2} \eta_{k}$.
Finally, using \eqref{eq.seq}, we conclude that
$ \eta_{k} \leq \min \{ h(\tau,x) : \tau \in [0, k] \}$
and, since $ t \in [k-1, k)$, it follows that $\eta_{k} \leq h(t,x)$. 
\hfill $\blacksquare$

\subsection{Proof of Lemma \ref{lem1smooth}}

We propose to adapt the proof 
of \cite[Lemma 48.3]{hahn1967stability} to the case where the origin is replaced by a general closed set $K \subset \mathbb{R}^n$.  For each integer $s$, we introduce the set
\begin{align} \label{eq.set}
I_s:= \left\{ 
x \in \mathbb{R}^n : 2^{s-3} \leq |x|^2_K \leq 2^{s+4}   
\right\}.
\end{align} 
Furthermore, we propose to decompose the set $I_s$ into a sequence of nonempty compact subsets $\{D^{s}_i\}^{N}_{i=1}$, where $N \in \{1,2,...,\infty\}$, such that $D^s_i \subset I_s$ for all $i \in \{1,2,...,N\}$. 
Furthermore, for each $i \in \{1,2,...,N\}$, there exist a finite set $\mathcal{N}^s_i \subset \{1,2,...,N \}$ and a compact set $\bar{D}^s_i$ including $D^s_i$ in its interior such that $\bar{D}^s_i \cap K = \emptyset$, $D^s_i \subset \bigcup_{j \in \mathcal{N}^s_i} D^s_j$, and $\bar{D}^s_i \cap \bar{D}^s_j = \emptyset$ for all $j \notin \mathcal{N}^s_i$.   

The rest of the proof follows in three steps. 
\begin{enumerate}
\item In the first step, we use Lemma \ref{lem2} to construct a function $\psi^s_{i} : \mathbb{R}_{\geq 0} \times \bar{D}^s_i \rightarrow \mathbb{R}_{\geq 0}$ that is nonincreasing with respect to its first argument, $\mathcal{C}^1$ on $\mathbb{R}_{\geq 0} \times \mbox{int}(\bar{D}^s_i)$, and satisfies \eqref{eq.eqnc} for all 
$(t,x) \in \mathbb{R}_{\geq 0} \times \bar{D}^s_i$.

\item In the next step, we consider an open set 
$O^s_i \subset \bar{D}^s_i$ that contains $D^s_i$, and a differentiable function $\lambda^s_{i} : \mathbb{R}^n \rightarrow \mathbb{R}_{\geq 0}$, which is positive in $O^s_i$ and vanishes outside. Then, we introduce the function $\psi_s (t,x) := \frac{1}{\lambda_s(x)} 
\sum^{N}_{i=1} \psi^s_i(t,x) \lambda^s_i(x)$ with 
$\lambda_s(x) := \sum^N_{i=1} \lambda^s_{i}(x)$. 
Note that, for each $x \in I_s$, the previous sum is finite by construction of the sequence $\{D^{s}_i\}^{N}_{i = 1}$. Furthermore,  the map $t \mapsto \psi_s(t,x)$ nonincreasing , $\psi_{s} \in \mathcal{C}^1(\mathbb{R}_{\geq 0} \times \mbox{int}(I_s))$, and satisfies \eqref{eq.eqnc} for all 
$(t,x) \in \mathbb{R}_{\geq 0} \times I_s$. 

\item In the last step, we consider $g(t,x) := \frac{1}{\lambda(x)} \sum^{+ \infty}_{s = - \infty}  \psi_{s}(t,x) \lambda_{s}(x)$, $\lambda(x) := \sum^{+\infty}_{s =-\infty} \lambda_{s}(x)$.
Finally, it is easy to see that for all $x \in \mathbb{R}^{n}$, the previous sum is finite. 
\end{enumerate}
\hfill $\blacksquare$

\balance

\bibliographystyle{ieeetr}       
\bibliography{biblio}

\def\loria{Loria} \def\nesic{Ne\v{s}i\'{c}\,}\def\nonumero{\def\numerodeitem{}}
\begin{thebibliography}{10}

\bibitem{aubin1987differential}
J.~Aubin, ``Differential calculus of set-valued maps. an update,'' 1987.

\bibitem{prajna2007framework}
S.~Prajna, A.~Jadbabaie, and G.~J. Pappas, ``A framework for worst-case and
  stochastic safety verification using barrier certificates,'' {\em {IEEE
  Transactions on Automatic Control}}, vol.~52, no.~8, pp.~1415--1428, 2007.

\bibitem{wieland2007constructive}
P.~Wieland and F.~Allg{\"o}wer, ``Constructive safety using control barrier
  functions,'' {\em IFAC Proceedings Volumes}, vol.~40, no.~12, pp.~462--467,
  2007.

\bibitem{prajna2005optimization}
S.~Prajna, {\em Optimization-based methods for nonlinear and hybrid systems
  verification}.
\newblock PhD thesis, California Institute of Technology, 2005.

\bibitem{taly2009deductive}
A.~Taly and A.~Tiwari, ``Deductive verification of continuous dynamical
  systems,'' in {\em Proceedings of the LIPIcs-{L}eibniz {I}nternational
  {P}roceedings in {I}nformatics}, vol.~4, Schloss Dagstuhl-Leibniz-Zentrum
  f{\"u}r Informatik, 2009.

\bibitem{BELLETER2019123}
D.~Belleter, M.~Maghenem, C.~Paliotta, and K.~Y. Pettersen, ``Observer based
  path following for underactuated marine vessels in the presence of ocean
  currents: A global approach,'' {\em Automatica}, vol.~100, pp.~123 -- 134,
  2019.

\bibitem{tanner2003stable}
H.~G. Tanner, A.~Jadbabaie, and G.~J. Pappas, ``Stable flocking of mobile
  agents, {P}art {I}: Fixed topology,'' in {\em Proceedings of the 42nd
  Conference on Decision and Control}, vol.~2, pp.~2010--2015, IEEE, 2003.

\bibitem{ames2014control}
A.~D. Ames, X.~Xu, J.~W. Grizzle, and P.~Tabuada, ``Control barrier function
  based quadratic programs for safety critical systems,'' {\em {IEEE
  Transactions on Automatic Control}}, vol.~62, no.~8, pp.~3861--3876, 2017.

\bibitem{WILLS20041415}
A.~G. Wills and W.~P. Heath, ``Barrier function based model predictive
  control,'' {\em Automatica}, vol.~40, no.~8, pp.~1415 -- 1422, 2004.

\bibitem{tee2009barrier}
K.~P. Tee, S.~S. Ge, and E.~H. Tay, ``{Barrier Lyapunov functions for the
  control of output-constrained nonlinear systems},'' {\em Automatica},
  vol.~45, no.~4, pp.~918--927, 2009.

\bibitem{nagumo1942lage}
M.~Nagumo, ``{\"U}ber die lage der integralkurven gew{\"o}hnlicher
  differentialgleichungen,'' {\em Proceedings of the Physico-Mathematical
  Society of Japan. 3rd Series}, vol.~24, pp.~551--559, 1942.

\bibitem{draftautomatica}
M.~Maghenem and R.~G. Sanfelice, ``Sufficient conditions for forward invariance
  and contractivity in hybrid inclusions using barrier functions,'' {\em
  Automatica}, p.~109328, 2020.

\bibitem{prajna2004safety}
S.~Prajna and A.~Jadbabaie, ``Safety verification of hybrid systems using
  barrier certificates,'' in {\em International Workshop on Hybrid Systems:
  Computation and Control}, pp.~477--492, Springer, 2004.

\bibitem{glotfelter2017nonsmooth}
P.~Glotfelter, J.~Cort{\'e}s, and M.~Egerstedt, ``Nonsmooth barrier functions
  with applications to multi-robot systems,'' {\em IEEE control systems
  letters}, vol.~1, no.~2, pp.~310--315, 2017.

\bibitem{8625554}
P.~{Glotfelter}, I.~{Buckley}, and M.~{Egerstedt}, ``Hybrid nonsmooth barrier
  functions with applications to provably safe and composable collision
  avoidance for robotic systems,'' {\em IEEE Robotics and Automation Letters},
  vol.~4, no.~2, pp.~1303--1310, 2019.

\bibitem{xu2018correctness}
X.~Xu, J.~W. Grizzle, P.~Tabuada, and A.~D. Ames, ``Correctness guarantees for
  the composition of lane keeping and adaptive cruise control,'' {\em IEEE
  Transactions on Automation Science and Engineering}, vol.~15, no.~3,
  pp.~1216--1229, 2018.

\bibitem{nguyen2015safety}
Q.~Nguyen and K.~Sreenath, ``Safety-critical control for dynamical bipedal
  walking with precise footstep placement,'' {\em IFAC-PapersOnLine}, vol.~48,
  no.~27, pp.~147--154, 2015.

\bibitem{ladde1974}
G.~S. Ladde and V.~Lakshmikantham, ``On flow-invariant sets.,'' {\em Pacific
  Journal of Mathematics}, vol.~51, no.~1, pp.~215--220, 1974.

\bibitem{ladde1972analysis}
G.~S. Ladde and S.~Leela, ``Analysis of invariant sets,'' {\em {A}nnali di
  {M}atematica {P}ura ed {A}pplicata}, vol.~94, no.~1, pp.~283--289, 1972.

\bibitem{lakshmikantham1969differential}
V.~Lakshmikantham and S.~Leela, {\em {D}ifferential and {I}ntegral
  {I}nequalities: {T}heory and {A}pplications}, vol.~I.
\newblock Academic press, New York, 1969.

\bibitem{kayande1966conditionally}
A.~A. Kayande and V.~Lakshmikantham, ``Conditionally invariant sets and vector
  {L}yapunov functions,'' {\em Journal of Mathematical Analysis and
  Applications}, vol.~14, no.~2, pp.~285--293, 1966.

\bibitem{DAI201762}
L.~Dai, T.~Gan, B.~Xia, and N.~Zhan, ``Barrier certificates revisited,'' {\em
  Journal of Symbolic Computation}, vol.~80, pp.~62 -- 86, 2017.
\newblock SI: Program Verification.

\bibitem{10.1007/978-3-642-39799-8_17}
H.~Kong, F.~He, X.~Song, W.~N.~N. Hung, and M.~Gu,
  ``Exponential-condition-based barrier certificate generation for safety
  verification of hybrid systems,'' in {\em Proceedings of the Computer Aided
  Verification}, (Berlin, Heidelberg), pp.~242--257, Springer Berlin
  Heidelberg, 2013.

\bibitem{robey2021learning}
A.~Robey, L.~Lindemann, S.~Tu, and N.~Matni, ``Learning robust hybrid control
  barrier functions for uncertain systems,'' 2021.

\bibitem{prajna2005necessity}
S.~Prajna and A.~Rantzer, ``On the necessity of barrier certificates,'' {\em
  IFAC Proceedings Volumes}, vol.~38, no.~1, pp.~526--531, 2005.

\bibitem{wisniewski2016converse}
R.~Wisniewski and C.~Sloth, ``Converse barrier certificate theorems,'' {\em
  {IEEE Transactions on Automatic Control}}, vol.~61, no.~5, pp.~1356--1361,
  2016.

\bibitem{ratschan2018converse}
S.~Ratschan, ``Converse theorems for safety and barrier certificates,'' {\em
  IEEE Transactions on Automatic Control}, vol.~63, no.~8, pp.~2628--2632,
  2018.

\bibitem{Persidskiipap}
K.~P. Persidskii, ``On a theorem of {L}iapunov,'' {\em C. R. (Dokl.) Acad. Sci.
  URSS}, vol.~14, pp.~541--543, 1937.

\bibitem{kurzweil1955}
J.~Kurzweil, ``On the inversion of {L}yapunov's first theorem on the stability
  of motion ({I}n {R}ussian),'' {\em Czechoslovak Mathematical Journal},
  vol.~5, no.~3, pp.~382--398, 1955.

\bibitem{kurzweil1957}
J.~Kurzweil and I.~Vrko{\v{c}}, ``Transformation of {L}yapunov's theorems on
  stability and {P}ersidskii's theorems on uniform stability ({I}n
  {R}ussian),'' {\em Czechoslovak Mathematical Journal}, vol.~7, no.~2,
  pp.~254--272, 1957.

\bibitem{temple1965stability}
N.~N. Krasovskii, {\em Stability of Motion. Applications of Lyapunov's Second
  Method to Differential Systems and Equations With Delay. Translated by J. L.
  Brenner}, vol.~48s.
\newblock Standford University Press, 1963.

\bibitem{hahn1967stability}
W.~Hahn, {\em Stability of Motion}, vol.~138.
\newblock Springer, 1967.

\bibitem{aubin2012differential}
J.~P. Aubin and A.~Cellina, {\em {D}ifferential {I}nclusions: {S}et-{V}alued
  {M}aps and {V}iability {T}heory}, vol.~264.
\newblock Springer Science \& Business Media, 2012.

\bibitem{Aubin:1991:VT:120830}
J.~P. Aubin, {\em Viability Theory}.
\newblock Cambridge, MA, USA: Birkhauser Boston Inc., 1991.

\bibitem{magh2018conditional1}
M.~{Maghenem} and R.~G. {Sanfelice}, ``Characterization of safety and
  conditional invariance for nonlinear systems,'' in {\em Proceedings of the
  American Control Conference (ACC)}, pp.~5039--5044, July 2019.

\bibitem{goebel2012hybrid}
R.~Goebel, R.~G. Sanfelice, and A.~R. Teel, {\em {Hybrid Dynamical Systems:
  Modeling, stability, and robustness}}.
\newblock Princeton University Press, 2012.

\bibitem{michael1956continuous}
E.~Michael, ``Continuous selections. {I},'' {\em {Annals of Mathematics}},
  pp.~361--382, 1956.

\bibitem{aubin2009set}
J.~P. Aubin and H.~Frankowska, {\em Set-valued {A}nalysis}.
\newblock Springer Science \& Business Media, 2009.

\bibitem{clarke2008nonsmooth}
F.~H. Clarke, Y.~S. Ledyaev, R.~J. Stern, and P.~R. Wolenski, {\em {N}onsmooth
  {A}nalysis and {C}ontrol {T}heory}, vol.~178.
\newblock Springer Science \& Business Media, 2008.

\bibitem{Sanfelice:monotonicity}
M.~Maghenem, A.~Melis, and R.~G. Sanfelice, ``Monotonicity along solutions to
  constrained differential inclusions,'' in {\em Proceeding of the 58th IEEE
  Conference on Decision and Control}, 2019.
\newblock Nice, France.

\bibitem{rockafellar2009variational}
R.~T. Rockafellar and J.~B.~R. Wets, {\em {Variational Analysis}}, vol.~317.
\newblock Springer Science \& Business Media, 1997.

\bibitem{sideris2013ordinary}
T.~C. Sideris, ``Ordinary differential equations and dynamical systems,''

\bibitem{konda2019characterizing}
R.~Konda, A.~D. Ames, and S.~Coogan, ``Characterizing safety: Minimal control
  barrier functions from scalar comparison systems,'' {\em IEEE Control Systems
  Letters}, 2020.

\bibitem{redheffer1972theorems}
R.~M. Redheffer, ``The theorems of bony and brezis on flow-invariant sets,''
  {\em The American Mathematical Monthly}, vol.~79, no.~7, pp.~740--747, 1972.

\bibitem{agarwal1993uniqueness}
R.~P. Agarwal and V.~Lakshmikantham, {\em Uniqueness and nonuniqueness criteria
  for ordinary differential equations}.
\newblock World Scientific Publishing Company, 1993.

\bibitem{jankovic2018robust}
M.~Jankovic, ``Robust control barrier functions for constrained stabilization
  of nonlinear systems,'' {\em Automatica}, vol.~96, pp.~359--367, 2018.

\bibitem{sanfelice2007invariance}
R.~G. Sanfelice, R.~Goebel, and A.~R. Teel, ``Invariance principles for hybrid
  systems with connections to detectability and asymptotic stability,'' {\em
  {IEEE Transactions on Automatic Control}}, vol.~52, no.~12, pp.~2282--2297,
  2007.

\bibitem{clarke1990optimization}
F.~H. Clarke, {\em {O}ptimization and {N}onsmooth {A}nalysis}, vol.~5.
\newblock 1990.

\bibitem{CP5-SIMUL4-ACC2019}
M.~Maghenem and R.~G. Sanfelice, ``Characterizations of safety in hybrid
  inclusions via barrier functions,'' in {\em Proceedings of the 22nd ACM
  International Conference on Hybrid Systems: Computation and Control}, HSCC
  '19, (NY, USA), pp.~109--118, ACM, 2019.

\bibitem{ACC2020-1}
M.~Maghenem and R.~G. Sanfelice, ``Minimal-time functions in constrained
  nonlinear systems with applications to reachability analysis,'' in {\em
  Proceedings of the 2020 IEEE American Control Conference (ACC)}.

\end{thebibliography}

\section*{Biography}

\begin{wrapfigure}{l}{25mm} 
\includegraphics[width=1in,height=1.25in,clip,
keepaspectratio]{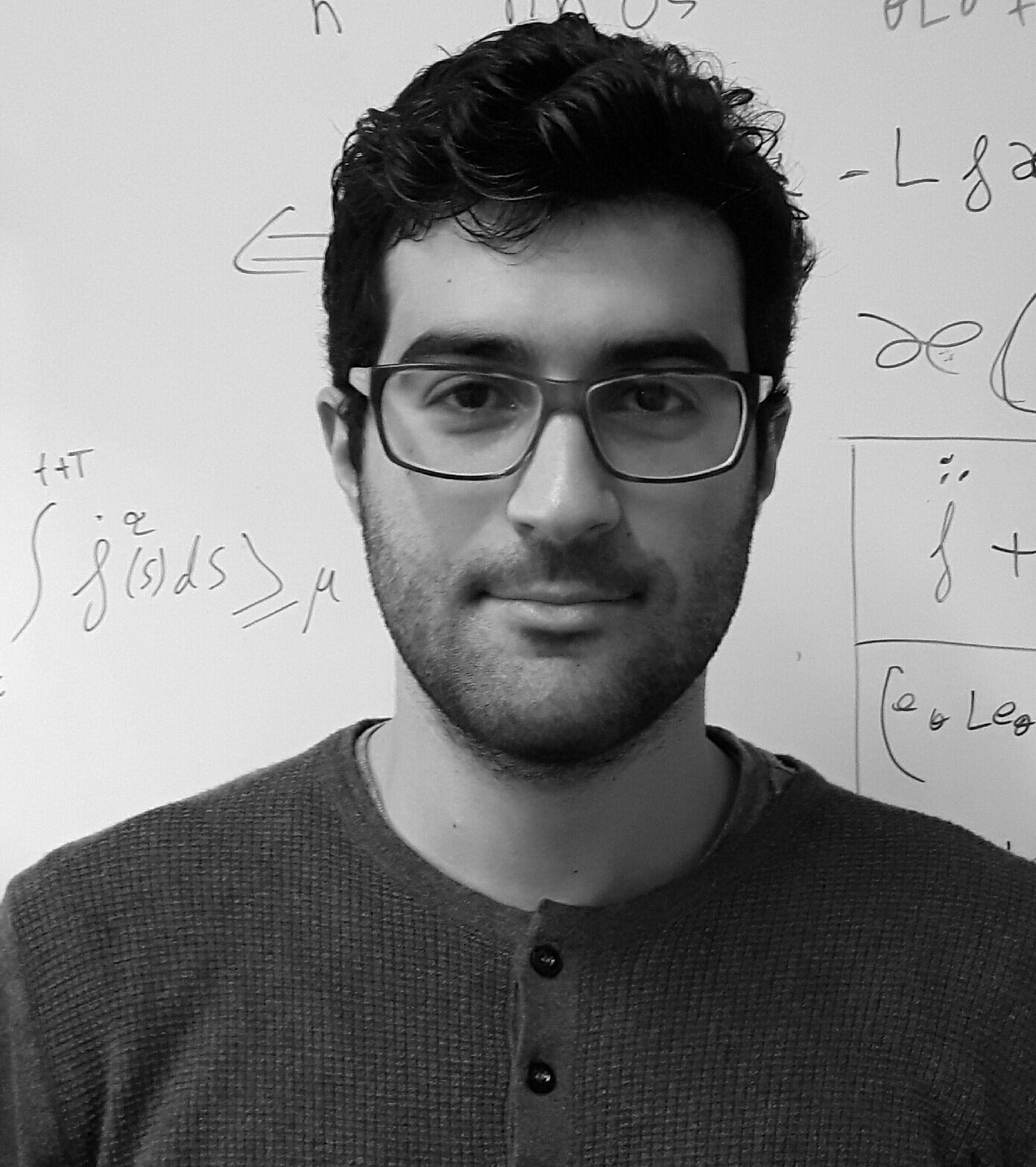}
\end{wrapfigure} 
\par \textbf{Mohamed Maghenem} received his Control-Engineer degree from the Polytechnical School of Algiers, Algeria, in 2013, his M.S. and Ph.D. degrees in Automatic Control from the University of Paris-Saclay, France, in 2014 and 2017, respectively. He was a Postdoctoral Fellow at the Electrical and Computer Engineering Department at the University of California at Santa Cruz from 2018 through 2021.
 M. Maghenem has the honour of holding a research position at the French National Centre of Scientific Research (CNRS) since January 2021.  His research interests include dynamical systems theory (stability, safety, reachability, robustness, and synchronization), control systems theory (adaptive, time-varying, linear, non-linear, hybrid, robust, etc.) with applications to power systems, mechanical systems, and cyber-physical systems.
\par

\begin{wrapfigure}{l}{25mm} 
\includegraphics[width=1in,height=1.25in,clip,keepaspectratio]{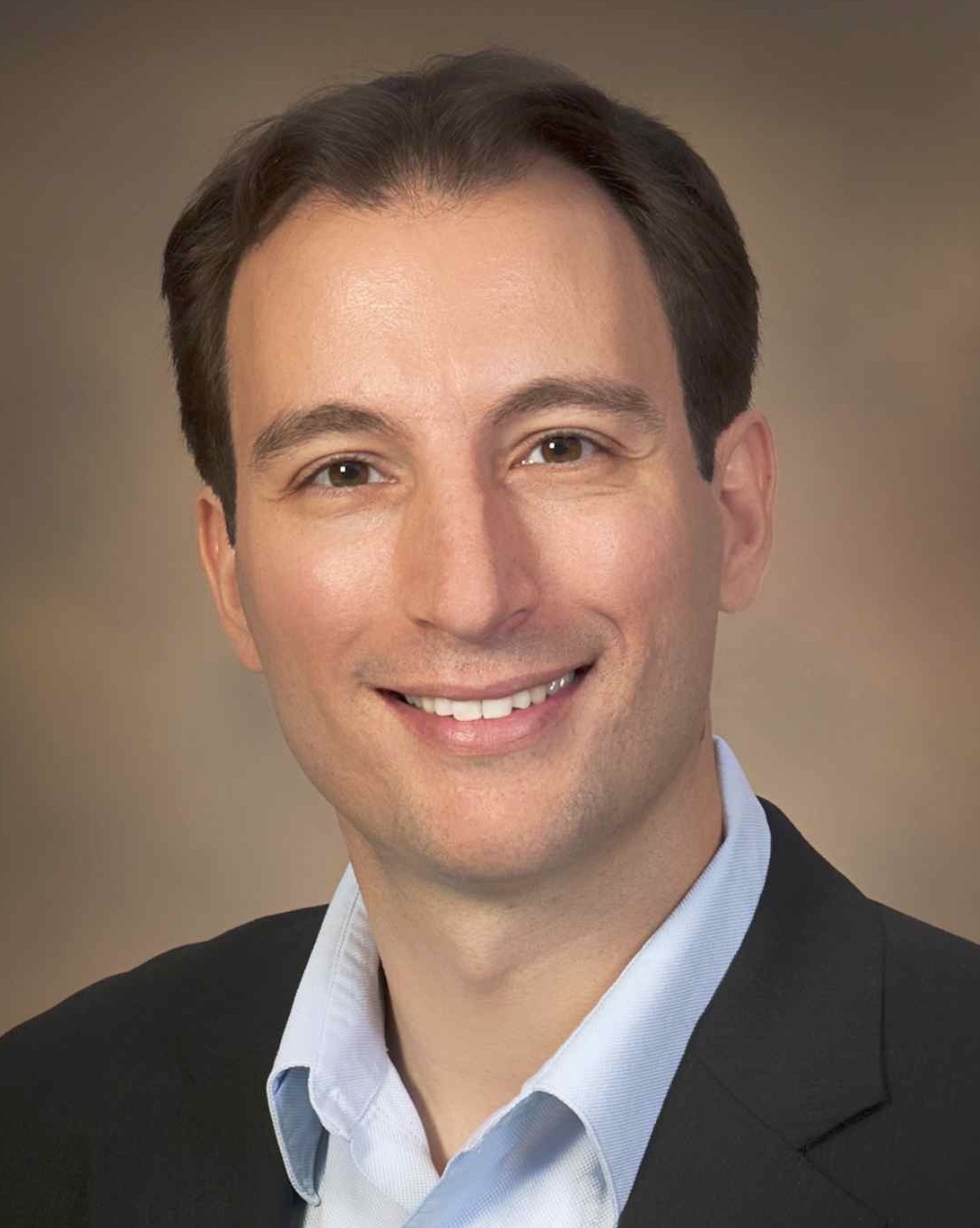}
\end{wrapfigure} 
\par \textbf{Ricardo. G. Sanfelice} received the B.S. degree in Electronics Engineering from the Universidad de Mar del Plata, Buenos Aires, Argentina, in 2001, and the M.S. and Ph.D. degrees in Electrical and Computer Engineering from the University of California, Santa Barbara, CA, USA, in 2004 and 2007, respectively. In 2007 and 2008, he held postdoctoral positions at the Laboratory for Information and Decision Systems at the Massachusetts Institute of Technology and at the Centre Automatique et Systèmes at the École de Mines de Paris. In 2009, he joined the faculty of the Department of Aerospace and Mechanical Engineering at the University of Arizona, Tucson, AZ, USA, where he was an Assistant Professor. In 2014, he joined the University of California, Santa Cruz, CA, USA, where he is currently Professor in the Department of Electrical and Computer Engineering. Prof. Sanfelice is the recipient of the 2013 SIAM Control and Systems Theory Prize, the National Science Foundation CAREER award, the Air Force Young Investigator Research Award, the 2010 IEEE Control Systems Magazine Outstanding Paper Award, and the 2020 Test-of-Time Award from the Hybrid Systems: Computation and Control Conference. His research interests are in modeling, stability, robust control, observer design, and simulation of nonlinear and hybrid systems with applications to power systems, aerospace, and biology.
\par

\end{document}